\definecolor{aleacolor}{rgb}{0.16,0.59,0.78}
\renewcommand{\cite}{\citet}
\theoremstyle{plain}
\newtheorem{theorem}{Theorem}[section]                                          
\newtheorem{lemma}[theorem]{Lemma}
\newtheorem{corollary}[theorem]{Corollary}
\theoremstyle{definition}
\newtheorem{definition}[theorem]{Definition}
\theoremstyle{remark}
\newtheorem{remark}[theorem]{Remark}
\newtheorem{example}{Example}
\newenvironment{hypothesis}[1]{
	\begin{enumerate}[label={\sf(\textbf{#1}-\arabic*)},resume=hyp#1]\begin{sf}}
		{\end{sf}\end{enumerate}}
\newenvironment{hypothesis*}[1]{
	\begin{enumerate}[label={\sf(\textbf{#1})}]\begin{sf}}
		{\end{sf}\end{enumerate}}
\makeatletter \@addtoreset{equation}{section} \makeatother
\newcommand{\rme}{\mathrm{e}}
\newcommand{\N}{\mathbb{N}}
\newcommand{\Z}{\mathbb{Z}}
\newcommand{\zset}{\mathbb{Z}}
\newcommand{\R}{\mathbb{R}}
\newcommand{\C}{\mathbb{C}}
\newcommand{\E}{\mathbb{E}}
\newcommand{\EE}{\mathbb{E}}
\newcommand{\PP}{\mathbb{P}}
\newcommand{\btheta}{\bm\theta}
\newcommand{\Dfrac}[2]{{\displaystyle \frac{#1}{#2}}}
\newcommand{\rmd}{\mathrm{d}}
\newcommand{\rmi}{\mathrm{i}}
\begin{document}

\hypersetup{pageanchor=false}

\title[Prediction of weakly locally stationary processes]
{Prediction of weakly locally stationary processes by auto-regression}

\author{Fran\c{c}ois~Roueff}
\author{Andrés~S\'anchez-Pérez}

\address{LTCI, Télécom ParisTech, Universit\'e Paris-Saclay\newline
46, rue Barrault,\newline
75634 Paris Cedex 13, France.}

\email{francois.roueff@telecom-paristech.fr}%, andres.sanchez-perez@telecom-paristech.fr}
\urladdr{\url{https://perso.telecom-paristech.fr/~roueff/}}%, \url{http://perso.telecom-paristech.fr/~sanchezp/}}

\thanks{This work has been partially supported by the Conseil r\'egional
d'\^{I}le-de-France under a doctoral allowance of its program R\'eseau de
Recherche Doctoral en Math\'ematiques de l'\^{I}le de France (RDM-IdF) for the period 2012 - 2015 and by the Labex LMH (ANR-11-IDEX-003-02).
}

\subjclass[2000]{62M20, 62G99, 62M10, 68W27.} 
\keywords{locally stationary time series, auto-regression coefficients, time varying autoregressive processes, minimax-rate prediction}

\begin{abstract}
  In this contribution we introduce weakly locally stationary time series
  through the local approximation of the non-stationary covariance structure by
  a stationary one.  This allows us to define autoregression coefficients in a
  non-stationary context, which, in the particular case of a locally stationary
  Time Varying Autoregressive (TVAR) process, coincide with the generating
  coefficients. We provide and study an estimator of the time varying
  autoregression coefficients in a general setting. The proposed estimator of
  these coefficients enjoys an optimal minimax convergence rate under limited
  smoothness conditions. In a second step, using a bias reduction technique, we
  derive a minimax-rate estimator for arbitrarily smooth time-evolving
  coefficients, which outperforms the previous one for large data sets. In
  turn, for TVAR processes, the predictor derived from the estimator exhibits
  an optimal minimax prediction rate.
\end{abstract}

\maketitle

\hypersetup{pageanchor=true}
\section{Introduction}

In many applications, one is interested in predicting the next values of an
observed time series. It is the case in various areas like finance (stock
market, volatility on prices), social sciences (population studies),
epidemiology, meteorology and network systems (Internet
traffic). Autoregressive processes have been used successfully in a stationary
context for several decades.  On the other hand, in a context where the number
of observations can be very large, the usual stationarity assumption has to be
weakened to take into account some smooth evolution of the environment. 

Several prediction methods developed in signal processing are well known to adapt
to a changing environment. This is the case of the wide spread recursive least
square algorithms. The initial goal of these methods is to provide an online
procedure for estimating a regression vector with low numerical cost. Such
methods usually rely on a forgetting factor or a gradient step size $\gamma$
and they can be shown to be consistent in a stationary environment when
$\gamma$ decreases adequately to zero (see e.g. \cite{Duflo:1997}). Even when
the environment is changing, that is, when the regression parameter evolves
along the time, a ``small enough'' $\gamma$ often yields a good tracking of the
evolving regression parameter. In order to have a sound and comprehensive
understanding of this phenomenon, an interesting approach is to consider a
local stationarity assumption, as successfully initiated in
\cite{Dahlhaus:1996} by relying on a non-stationary spectral representation
introduced in \cite{Priestley:1965}; see also \cite{Dahlhaus:2012} and the
references therein for a recent overview. The basic idea is to provide an
asymptotic analysis for the statistical inference of non-stationary time series
such as time varying autoregressive (TVAR) processes using local
stationary approximations. The analysis of the Normalized Least Mean Squares
(NLMS) algorithm for tracking a changing autoregression parameter in this
framework is tackled in \cite{Moulines_Priouret_Roueff:2005}.  Such an analysis
is based on the usual tools of non-parametric statistics.  The TVAR parameter
$\btheta$ is seen as the regular samples of a smooth $\R^{d}$-valued
function. An in-fill asymptotic allows one to derive the rates of
convergence of the NLMS estimator for estimating this function within
particular smoothness classes of functions.  As shown in
\cite{Moulines_Priouret_Roueff:2005}, it turns out that the NLMS algorithm
provides an optimal minimax rate estimator of the TVAR parameter with Hölder
smoothness index $\beta\in(0,1]$. However it is no longer optimal for $\beta>1$, that
is, when the TVAR parameter is smoother than a continuously differentiable
function. An improvement of the NLMS is proposed in
\cite{Moulines_Priouret_Roueff:2005} to cope with the case $\beta\in(0,2]$ but,
to the best of our knowledge, there is no available method neither for the
$\btheta$ minimax-rate estimation nor for the minimax-rate prediction when
$\beta>2$, that is when the TVAR parameter is smoother than a two-times
continuously differentiable function.

In the present work, our main contribution is twofold. First we extend the
concept of time-varying linear prediction coefficients to a general class of
weakly locally stationary processes, which includes the class of locally
stationary processes as introduced in \cite{Dahlhaus:1996}.  In the specific
case of a TVAR process, these coefficients correspond to the time-varying
autoregression parameters. Second, we show that the tapered Yule-Walker
estimator introduced in \cite{Dahlhaus_Giraitis:1998} for TVAR processes also
applies to this general class and is minimax-rate for Hölder indices up to
$\beta=1$ for asymmetric tapers and up to $\beta=2$ for symmetric
ones. Moreover, by applying a bias reduction technique, we derive a new
estimator which is minimax-rate for any arbitrarily large Hölder index $\beta$.
By achieving this goal, we provide a theoretically justified construction of
predictors that can be chosen optimally, depending on how smoothly the time
varying spectral density evolves along the time. On the other hand, in
practical situations, one may not have a clear view on the value of the
smoothness index $\beta$ and one should rely on data driven methods that are
therefore called \emph{adaptive}. This problem was recently tackled in
\cite{Giraud_Roueff_Sanchez-Perez:2015} . More precisely, using aggregation
techniques introduced in the context of individual sequences prediction (see
\emph{e.g.} \cite{Cesa-Bianchi_Lugosi:2006}) and statistical learning (see
\emph{e.g.}  \cite{Barron:1987}), one can aggregate sufficiently many
predictors in order to build a minimax predictor which adapts to the unknown
smoothness $\beta$ of the time varying parameter of a TVAR process.  However, a
crucial requirement in \cite{Giraud_Roueff_Sanchez-Perez:2015} is to rely on
$\beta$-minimax-rate sequences of predictors for any $\beta>0$. Our main
contribution here is to fill this gap, hence achieving to solve the problem of
the adaptive minimax-rate linear forecasting of locally stationary TVAR
processes with coefficients of any (unknown, arbitrarily large) Hölder
smoothness index.

The paper is organized as follows. In
Section~\ref{section:locally_stationary_time_series}, we introduce a definition
of weakly locally stationary time series, the
regression problem investigated in this work in relation with the practical
prediction problem, and the tapered Yule-Walker estimator under study. 
General results on this estimator are presented in
Section~\ref{section:main_results} and a minimax rate estimator is derived. The
particular case of TVAR processes is treated in
Section~\ref{section:TVAR}. Numerical experiments illustrating these results
can be found in Section~\ref{section:numerical_work_minimax}. Postponed proofs
and useful lemmas are provided in the appendices.

\section{General setting} \label{section:locally_stationary_time_series} 
In the
following, non-random vectors and sequences are denoted using boldface symbols,
$\|\mathbf{x}\|$ denotes the Euclidean norm of $\mathbf{x}$,
$\|\mathbf{x}\|=(\sum_i|x_i|^2)^{1/2}$, and $\|\mathbf{x}\|_{1}$ its $\ell_{1}$
norm, $\|\mathbf{x}\|_{1}=\sum_i|x_i|$. If $f$ is a function,
$\|f\|_{\infty}=\sup_{x}|f(x)|$ corresponds to its sup norm.  If $A$ is a
matrix, $\|A\|$ denotes its spectral norm,
$\|A\|=\sup\{\|A\mathbf{x}\|\,,\;\|\mathbf{x}\|\leq1\}$. We moreover denote 
$$
\ell^1(\N)=\{\bm x\in\R^\N\;\text{s.t.}\;\|\bm x\|_1<\infty\} 
\quad\text{and}\quad
\ell_+^1(\N)=\{\bm x\in\R_+^\N\;\text{s.t.}\;\|\bm x\|_1<\infty\} \;.
$$
\subsection{Main definitions}
We consider a doubly indexed time series $(X_{t,T})_{t\in\Z,T\in\N^*}$. Here $t$ refers to a discrete time
index and $T$ is an additional index indicating the sharpness of the
\emph{local approximation} of the time series $(X_{t,T})_{t\in\Z}$ by a
stationary one. Coarsely speaking, $(X_{t,T})_{t\in\Z,T\in\N^*}$ is considered
to be \emph{weakly locally stationary} if, for $T$ large, given a set $S_T$ of sample
indices such that $t/T\approx u$ over $t\in S_T$, the sample $(X_{t,T})_{t\in
  S_T}$ can be approximately viewed as the sample of a  weakly stationary time series
depending on the \emph{rescaled location} $u$. Note that $u$ is a
continuous time parameter, sometimes referred to as the \emph{rescaled time}
index. Following \cite{Dahlhaus:1996}, $T$ is usually interpreted as the
number of available observations, in which case all the definitions are
restricted to $1\leq t\leq T$ and $u\in[0,1]$. However this is not essential in
our mathematical derivations and it is more convenient to set $t\in\Z$ and
$u\in\R$ for presenting our setting.

We use the following class of functions. For
$\alpha\in(0,1]$ the $\alpha-$H\"{o}lder semi-norm of a function
$\mathbf{f}:\R\rightarrow\C^{d}$ is defined by
$$
|\mathbf{f}|_{\alpha,0} = \sup_{0<|s-s'|<1}\Dfrac{\left\|\mathbf{f}(s)-\mathbf{f}(s')\right\|}{|s-s'|^{\alpha}} \;.
$$
This semi-norm is used to build a norm for any $\beta>0$ as it follows. Let
$k\in\N$ and $\alpha\in(0,1]$ be such that $\beta=k+\alpha$. If $\mathbf{f}$ is
$k$ times differentiable on $\R$, we define
$$
|\mathbf{f}|_{\beta} = \left|\mathbf{f}^{(k)}\right|_{\alpha,0}+\max_{0\leq s\leq k}\left\|\mathbf{f}^{(s)}\right\|_{\infty} \;,
$$
and  $|\mathbf{f}|_{\beta}=\infty$ otherwise. For $R>0$ and
$\beta>0$, the $(\beta,R)-$
H\"{o}lder ball of dimension $d$ is denoted by
$$
\Lambda_{d}(\beta,R) = \left\{\mathbf{f}:\R\rightarrow\C^{d},\ \textrm{such that}\ \ |\mathbf{f}|_{\beta}\leq R\right\} \;.
$$
We first introduce definitions for the time varying covariance and the local covariance functions. 
\begin{definition}[Time varying covariance function]
	Let $(X_{t,T})_{t\in\Z,T\in\N^{*}}$ be an array of random variables with
	finite variances. The local time varying covariance function $\gamma^{*}$ is
	defined for all $t\in\Z, T\in\N^{*}$ and $\ell\in\Z$ as
	\begin{eqnarray}
		\gamma^{*}\left(t,T,\ell\right) = \mathrm{cov}\left(X_{t,T},X_{t-\ell,T}\right) \;. \label{equation:time_varying_covariance_function}
	\end{eqnarray}
\end{definition}
\begin{definition}[Local covariance function and local spectral density]
  A \emph{local spectral density} 
  $f$ is a $\R^2\to\R_{+}$ function, $(2\pi)$-periodic and locally integrable with
  respect to the second variable. The \emph{local covariance function} $\gamma$
        associated with  the  \emph{local spectral density} $f$ is defined on $\R\times\Z$ by
	\begin{eqnarray}
		\gamma\left(u,\ell\right) = \int_{-\pi}^{\pi}\rme^{\rmi\ell\lambda}f\left(u,\lambda\right)\rmd\lambda \;. \label{equation:local_covariance_function}
	\end{eqnarray}
\end{definition}
In~(\ref{equation:local_covariance_function}), the variable $u$ should be seen
as \emph{rescaled} time index (in $\R$), $\ell$ as a (non-rescaled) time index
and $\lambda$ as a frequency (in $[-\pi,\pi]$). 
Recall that, by the Herglotz theorem (see
\cite[Theorem~4.3.1]{Brockwell_Davis:2002}), Equation~(\ref{equation:local_covariance_function}) guaranties that for any $u\in\R$,
$\left(\gamma\left(u,\ell\right)\right)_{\ell\in\Z}$ is indeed the
autocovariance function of a stationary time series. 
Now, we can state the definition of  weakly locally stationary processes that we use
here.
\begin{definition}[Weakly locally stationary
  processes] \label{definition:locally_stationary} Let $(X_{t,T})_{t\in\Z,T\geq
    T_0}$ be an array of random variables with finite variances and
  $T_0,\beta,R>0$. We say that $(X_{t,T})_{t\in\Z,T\geq T_0}$ is
  $(\beta,R)$-\emph{weakly locally stationary with local spectral density $f$}
  if, for all $\lambda\in\R$, we have
  $f(\cdot,\lambda)\in\Lambda_{1}(\beta,R)$, and the time varying covariance
  function $\gamma^*$ of $(X_{t,T})_{t\in\Z,T\geq T_0}$ and the local
  covariance function $\gamma$ associated with $f$ satisfy, for all $t\in\zset$
  and $T\geq T_0$,
	\begin{eqnarray} \label{equation:gamma_gamma_star}
		\left|\gamma^{*}\left(t,T,\ell\right)-\gamma\left(\frac{t}{T},\ell\right)\right| \leq R\,T^{-\min(1,\beta)}\;.
	\end{eqnarray}
\end{definition}
Let us give some examples fulfilling this definition.
\begin{example} \label{example:locally_stationary_Dahlhaus}
	Locally stationary processes were introduced in a general fashion by \cite{Dahlhaus:1996}
	using  the spectral representation
	\begin{eqnarray}
		X_{t,T} = \int_{-\pi}^{\pi}\rme^{\rmi t\lambda}A_{t,T}^{0}\left(\lambda\right)\xi\left(\rmd\lambda\right)\;, \label{equation:locally_stationary_process}
	\end{eqnarray}
	where $\xi(\rmd\lambda)$ is the spectral representation of a white noise and
	$(A_{t,T}^{0})_{t\in\Z,T\in\N^*}$ is a collection of transfer functions such
	that there exist a constant $K$ and a (unique) $2\pi-$ periodic function
	$A:\R\times\R\rightarrow\C$ with $A(u,-\lambda)=\overline{A(u,\lambda)}$
	such that for all $T\geq1$,
	\begin{eqnarray} \label{equation:A_locally_stationary}
		\sup_{t\in\Z,\lambda\in\R}\left|A_{t,T}^{0}\left(\lambda\right)-A\left(\frac{t}{T},\lambda\right)\right| \leq \frac{K}{T}\;. \label{equation:control_T_locally_stationary}
	\end{eqnarray}
	Provided adequate smoothness assumptions on the time varying transfer
        function $A$, this class of locally stationary processes satisfies
        Definition~\ref{definition:locally_stationary} (see
        \cite[Section~1]{Dahlhaus-JNPS:1996}) for some $\beta\geq1$ and
        $f(u,\lambda)=\left|A(u,\lambda)\right|^2$. The case $\beta\in(0,1]$
        can be obtained by raising $T$ to the power $\beta$
        in~(\ref{equation:A_locally_stationary}).
\end{example}

\begin{example}[Time-varying Causal Bernoulli Shift (TVCBS)]\label{example:non-stationary_CBS}
	Let $\varphi:\R\times\R^{\N}\to\R$. Consider, for all $T\geq1$ and
        $t\in\zset$, a mapping $\varphi^0_{t,T}:\R^{\N}\to\R$ defining the
        random variables
	\begin{eqnarray}\label{eq:CBS-def}
		X_{t,T} = \varphi^0_{t,T}\left((\xi_{t-k})_{k\geq0}\right)\;,
	\end{eqnarray}
	where $(\xi_t)_{t\in\Z}$ are i.i.d. We assume that
        $\E[|\xi_{0}|^{2r}]<\infty$ for some $r\geq1$ and that there exist $\beta,K>0$
        and $(\psi_k)_{k\geq0}\in\ell^1_+(\N)$, 
        such that, for all $T\geq1$, $t\in\zset$, $u,u'\in\R$ and
        $\bm x\in\R^{\N}$,
	\begin{eqnarray}\label{eq:CBS-def-1}
          \left|\varphi^0_{t,T}\left(\bm
          x\right)\right| &\leq&
                                 K\left(1+\sum_{k=0}^{\infty}\psi_k\left|x_{k}\right|\right)^{r}\;,
          \\
\label{eq:CBS-def-2}
\left|\varphi^0_{t,T}\left(\bm
    x\right)-\varphi\left(\frac tT,\bm x\right)\right| &\leq& 
K\,T^{-\min(1,\beta)}\,\left(1+\sum_{k=0}^{\infty}\psi_k\left|x_{k}\right|\right)^{r}\;,
	\end{eqnarray}
        It is easy to see that $(X_{t,T})_{t\in\Z,T\geq T_0}$
        satisfies~(\ref{equation:gamma_gamma_star}) with a constant $R$ only
        depending on $K$, $\beta$, $(\psi_k)_{k\in\N}$ and
        $\E[|\xi_{0}|^{2r}]$, and with local covariance function
        $\gamma(u,\cdot)$ defined as the covariance function of the stationary
        causal Bernoulli shift process $(X_t(u))_{t\in\zset}$ defined by
        $X_t(u)=\varphi(u,\left((\xi_{t-k})_{k\geq0}\right)$. To obtain that
        $(X_{t,T})_{t\in\Z,T\geq T_0}$ is $(\beta,R)$-weakly locally
        stationary, it thus only remains to check that $(X_t(u))_{t\in\Z}$ admits a
        spectral density $f(u,\cdot)$ and that the resulting local spectral
        density satisfies $f(\cdot,\lambda)\in\Lambda_{1}(\beta,R)$ for all
        $\lambda\in\R$.
\end{example}

\begin{example}[TVAR($p$) model]\label{example:tvar}
  Under suitable assumptions, the TVAR process is a particular case both of
  Example~\ref{example:locally_stationary_Dahlhaus} (see~\cite[Theorem
  2.3]{Dahlhaus:1996}) and Example~\ref{example:non-stationary_CBS} (see
  Section~\ref{section:TVAR}). It is defined as the (unique) \emph{stable}
  solution of the recursive equation
  \begin{eqnarray} \label{equation:TVAR-p}
    X_{t,T} = \sum_{j=1}^{p} \theta_{j}\left(\frac{t}{T}\right) X_{t-j,T} + \sigma\left(\frac{t}{T}\right) \xi_t \;,
  \end{eqnarray}
  where $\btheta=[\theta_{1}\,\ldots\,\theta_{p}]':\R\to\R^{p}$ are the
  time varying autoregressive coefficients and $(\xi_t)_{t\in\Z}$ are
  i.i.d. centred and with variance $1$. This example is detailed in
  Section~\ref{section:TVAR}. 
\end{example}

\subsection{Statement of the problem}
Consider a weakly locally stationary  $(X_{t,T})_{t\in\Z,T\geq T_0}$, which
we assume to have mean zero for convenience. Let $d\in\N^{*}$. For each
$t=1,\ldots,T$, define the prediction vector of order $d$ by
\begin{eqnarray} \label{equation:btheta_star_definition}
	\btheta_{t,T}^{*} &=& \mathop{\arg\min}_{\btheta=[\theta_{1}\,\ldots\,\theta_{d}]'\in\R^{d}}\EE\left[\left(X_{t,T}-\sum_{k=1}^d \theta_{k}\;X_{t-k,T}\right)^{2}\right] \nonumber\\ 
	&=& \mathop{\arg\min}_{\btheta\in\R^{d}}\EE\left[\left(X_{t,T}-\btheta^{\prime}\mathbf{X}_{t-1,T}\right)^{2}\right] \;,
\end{eqnarray}
where $A'$ denotes the
transpose of matrix $A$ and
$\mathbf{X}_{s,T}=\left[X_{s,T}\,\,\ldots\,\,X_{s-(d-1),T}\right]'$.

Let $\Gamma_{t,T}^{*}$ be the time varying covariances matrix
$\Gamma_{t,T}^{*}=(\gamma^{*}(t-i,T,j-i);\,i,j=1,\ldots,d)$ where $\gamma^{*}$ is
the time varying covariance function as defined
in~(\ref{equation:time_varying_covariance_function}). Provided
that $\Gamma^{*}_{t,T}$ is non-singular, the
solution of~(\ref{equation:btheta_star_definition}) is given by
\begin{equation} \label{equation:btheta_star}
	\btheta_{t,T}^{*} =  \left(\Gamma^{*}_{t,T}\right)^{-1}\bm\gamma_{t,T}^{*}\;,
\end{equation}
where
$\bm\gamma_{t,T}^{*}=[\gamma^{*}(t,T,1)\;\ldots\;\gamma^{*}(t,T,d)]'$. 
Analogously to~(\ref{equation:btheta_star}), and with the aim of approximating
the local solution of the stationary Yule-Walker equations, we set
\begin{eqnarray}
	\btheta_u &=& \Gamma_u^{-1}\bm\gamma_u\;, \label{equation:definition_theta_Y-W} 
\end{eqnarray}
where $\bm\gamma_{u}=[\gamma(u,1)\;\ldots\;\gamma(u,d)]'$, $\Gamma_{u}$ is the
covariances matrix $\Gamma_{u}=(\gamma(u,i-j);\,i,j=1,\ldots,d)$ and $\gamma$
is the local covariance function as defined
in~(\ref{equation:local_covariance_function}).
To ensure the above matrices to be non-singular, we assume a lower bound on the
local spectral density, yielding the main assumption used on the model which depends on some
positive constants $\beta, R$ and $f_{-}$ and on a local spectral density $f$.  
\begin{hypothesis}{M}
\item\label{hypothesis:locally_stationary_with_f} The sequence
  $(X_{t,T})_{t\in\Z,T\geq T_0}$ is a $(\beta,R)$--weakly locally stationary process
  with local spectral density $f$ in the sense of Definition
  \ref{definition:locally_stationary}. Assume moreover that the spectral density $f$ satisfies
  $f(u,\lambda)\geq f_{-}$ for all $u,\lambda\in\R$.
\end{hypothesis}
The following lemma allows us to control the error of the approximation
of the optimal linear prediction coefficients $\btheta_{t,T}^{*}$ by the local
ones $\btheta_{t/T}$. Its proof is postponed to appendix~\ref{sec:proof-lemma-refl}
for convenience. 
\begin{lemma} \label{lemma:theta_star_theta_control}
Let $d\in\N^{*},\beta>0, R>0$ and $f_{-}>0$. Suppose that
Assumption~\ref{hypothesis:locally_stationary_with_f} holds. Then, there exist
two constants $C_{1}, T_{0}>0$ depending only on $d$, $\beta$, $R$ and $f_{-}$ such
that, for all $t\in\Z$ and $T\geq T_{0}$,
\begin{align} \label{equation:inequality_theta_star_theta}
\left\|\btheta_{t,T}^{*}-\btheta_{t/T}\right\| \leq C_{1}\,T^{-\min(1,\beta)} \;.
\end{align}
\end{lemma}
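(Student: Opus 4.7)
The plan is to treat this as a standard perturbation argument for the solution of a linear system, comparing $\btheta_{t,T}^{*}=(\Gamma_{t,T}^{*})^{-1}\bm\gamma_{t,T}^{*}$ with $\btheta_{t/T}=\Gamma_{t/T}^{-1}\bm\gamma_{t/T}$. First I would exploit the identity
\[
\btheta_{t,T}^{*}-\btheta_{t/T}=(\Gamma_{t,T}^{*})^{-1}\bigl[(\bm\gamma_{t,T}^{*}-\bm\gamma_{t/T})-(\Gamma_{t,T}^{*}-\Gamma_{t/T})\,\btheta_{t/T}\bigr],
\]
which reduces the problem to three ingredients: a uniform upper bound on $\|(\Gamma_{t,T}^{*})^{-1}\|$, upper bounds on the two perturbation quantities in brackets of order $T^{-\min(1,\beta)}$, and a uniform bound on $\|\btheta_{t/T}\|$.

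The second and third ingredients follow directly from the hypotheses. Since $f(\cdot,\lambda)\in\Lambda_1(\beta,R)$, the definition of $|\cdot|_\beta$ gives $\|f(\cdot,\lambda)\|_{\infty}\leq R$, so $|\gamma(u,\ell)|\leq 2\pi R$ for all $u,\ell$, and in particular the entries of $\Gamma_{t/T}$ and $\bm\gamma_{t/T}$ are uniformly bounded. The bound~(\ref{equation:gamma_gamma_star}) applied to each of the $d^{2}$ entries of $\Gamma_{t,T}^{*}-\Gamma_{t/T}$ and the $d$ entries of $\bm\gamma_{t,T}^{*}-\bm\gamma_{t/T}$ yields, via an entrywise-to-spectral norm bound,
\[
\|\Gamma_{t,T}^{*}-\Gamma_{t/T}\|\leq d R\,T^{-\min(1,\beta)},\qquad \|\bm\gamma_{t,T}^{*}-\bm\gamma_{t/T}\|\leq \sqrt{d}\,R\,T^{-\min(1,\beta)}.
\]

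For the first ingredient, I use the lower bound $f\geq f_{-}$: by the usual Toeplitz argument (the smallest eigenvalue of $\Gamma_{u}$ is bounded below by $2\pi$ times the essential infimum of $f(u,\cdot)$), one has $\|\Gamma_u^{-1}\|\leq(2\pi f_{-})^{-1}$ uniformly in $u$, and consequently $\|\btheta_{t/T}\|\leq (2\pi f_{-})^{-1}\sqrt{d}\cdot 2\pi R$. To transfer the invertibility to $\Gamma_{t,T}^{*}$, I choose $T_{0}$ so large that $dR\,T_{0}^{-\min(1,\beta)}\leq \pi f_{-}$; then a Neumann series (or Weyl's inequality) argument shows $\Gamma_{t,T}^{*}$ is invertible and $\|(\Gamma_{t,T}^{*})^{-1}\|\leq(\pi f_{-})^{-1}$ for every $t\in\Z$ and $T\geq T_{0}$.

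Plugging these estimates into the displayed identity and using the triangle inequality yields the desired inequality with a constant $C_{1}$ that depends only on $d$, $\beta$, $R$ and $f_{-}$. The only real subtlety is the third step, namely ensuring that $\Gamma_{t,T}^{*}$ is itself uniformly well-conditioned, which forces the threshold $T_{0}$ to depend on $d,\beta,R,f_{-}$; once this is fixed, everything else is a one-line combination of the previous bounds.
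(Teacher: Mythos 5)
Your route is essentially the paper's: the same identity $\btheta_{t,T}^{*}-\btheta_{t/T}=(\Gamma_{t,T}^{*})^{-1}\bigl[(\bm\gamma_{t,T}^{*}-\bm\gamma_{t/T})-(\Gamma_{t,T}^{*}-\Gamma_{t/T})\btheta_{t/T}\bigr]$, the same eigenvalue lower bound $2\pi f_{-}$ for $\Gamma_{t/T}$ from Lemma~\ref{lemma:eigenvalues_covariance_matrix-spectral_density}, and the same perturbation step giving $\|(\Gamma_{t,T}^{*})^{-1}\|\leq(\pi f_{-})^{-1}$ for $T\geq T_{0}$. There is, however, one genuine gap: your claim that the entrywise bound on $\Gamma_{t,T}^{*}-\Gamma_{t/T}$ follows from~(\ref{equation:gamma_gamma_star}) alone is not correct. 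The matrix $\Gamma_{t,T}^{*}$ has entries $\gamma^{*}(t-i,T,j-i)$, so the relevant rescaled time is $(t-i)/T$ and \emph{varies with the row}, whereas every entry of $\Gamma_{t/T}$ is evaluated at the single point $u=t/T$. Inequality~(\ref{equation:gamma_gamma_star}) only compares $\gamma^{*}(t-i,T,j-i)$ with $\gamma\bigl((t-i)/T,j-i\bigr)$; to reach $\gamma(t/T,i-j)$ you must also control
\begin{equation*}
\left|\gamma\left(\frac{t-i}{T},j-i\right)-\gamma\left(\frac{t}{T},j-i\right)\right|
=\left|\int_{-\pi}^{\pi}\rme^{\rmi(j-i)\lambda}\left(f\left(\frac{t-i}{T},\lambda\right)-f\left(\frac{t}{T},\lambda\right)\right)\rmd\lambda\right|\;,
\end{equation*}
which requires the H\"older smoothness $f(\cdot,\lambda)\in\Lambda_{1}(\beta,R)$ of Definition~\ref{definition:locally_stationary} and yields an extra term of order $2\pi dR\,T^{-\min(1,\beta)}$ once $T\geq d$ (together with the symmetry $\gamma(u,\ell)=\gamma(u,-\ell)$). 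This is exactly the additional contribution the paper carries, leading to $\|\Gamma_{t/T}-\Gamma_{t,T}^{*}\|\leq d(2\pi dR+R)T^{-\min(1,\beta)}$ rather than your $dR\,T^{-\min(1,\beta)}$. The omission does not affect the rate or the structure of the argument, only the constant and the fact that $T_{0}$ must also exceed $d$, but as written that step is not implied by the hypothesis. (The vector part is unaffected, since $\bm\gamma_{t,T}^{*}$ is indexed at time $t$ only.)

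The rest of your argument is sound. In particular, your bound $\|\btheta_{t/T}\|\leq\sqrt{d}\,R/f_{-}$, obtained from $\|\Gamma_{u}^{-1}\|\leq(2\pi f_{-})^{-1}$ and $|\gamma(u,\ell)|\leq2\pi R$, is a legitimate alternative to the paper's use of Lemma~\ref{lemma:roots_characteristic_polynomial_YW}, which gives the $f_{-}$-free bound $\|\btheta_{t/T}\|\leq2^{d}-1$; either choice produces a constant $C_{1}$ depending only on $d,\beta,R,f_{-}$.
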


An estimator $\widehat{\btheta}$ of $\btheta$ is studied in
\cite{Dahlhaus_Giraitis:1998} for the model of
Example~\ref{example:locally_stationary_Dahlhaus}. In the following we improve
these results by deriving minimax rate properties of the estimator of
\cite{Dahlhaus_Giraitis:1998} and extensions of it in a more general
setting.

In the following, the problem that we are interested is to derive a minimax
rate estimator $\widetilde{\btheta}$ at a given smoothness index $\beta>0$,
which means that, for such a $\beta$, the estimation risk, say the quadratic
risk $\E[\|\widetilde{\btheta}_{t,T}-\btheta_{t,T}^{*}\|^{2}]$, can be bounded
uniformly over all processes
satisfying~\ref{hypothesis:locally_stationary_with_f} (among with additional
assumptions), and that the corresponding rate of convergence as $T\to\infty$
cannot be improved by any other estimator. The case $\beta\leq2$ is solved in
\cite{Moulines_Priouret_Roueff:2005} for the subclass of TVAR models.

\subsection{Minimax estimation for adaptive prediction} 
\label{sec:how-minim-estim-prediction}

Let $\widehat{X}_{d,t,T}^{*}$ denote the best linear predictor of order $d$ of
$X_{t,T}$, which as a consequence of~(\ref{equation:btheta_star_definition}),
reads
\begin{eqnarray*}
	\widehat{X}_{d,t,T}^{*}=\left(\btheta_{t,T}^{*}\right)^{\prime}\mathbf{X}_{t-1,T}\;,
\end{eqnarray*}
We denote by $\widehat{X}_{t,T}^{*}$ the best predictor of $X_{t,T}$ given its
past, that is, the conditional expectation 
\begin{eqnarray}
	\widehat{X}_{t,T}^{*}=\EE\left[X_{t,T}\left|X_{s,T},\,s\leq
t-1\right.\right]\;.
\end{eqnarray}
As explained before, the goal of this paper is to derive 
estimators, say $\widetilde{\btheta}_{t,T}\in\R^{d}$, of  
$\btheta_{t/T}$, which is a local approximation of $\btheta_{t,T}^{*}$. 
In this section, we assume that $\widetilde{\btheta}_{t,T}$ is a function of the past $X_{s,T}$, $s\leq
t-1$. Then
$\widetilde{\btheta}_{t,T}^{\prime}\mathbf{X}_{t-1,T}$ is a legitimate
predictor of $X_{t,T}$ and we have the following decomposition of the
corresponding prediction quadratic risk
\begin{eqnarray*} %\label{equation:risk_decomposition}
\EE\left[\left(X_{t,T}-\widetilde{\btheta}_{t,T}^{\prime}\mathbf{X}_{t-1,T}\right)^{2}\right] = \EE\left[\left(X_{t,T}-\widehat{X}_{t,T}^{*}\right)^{2}\right] 
+ \EE\left[\left(\widetilde{\btheta}_{t,T}^{\prime}\mathbf{X}_{t-1,T}-\widehat{X}_{t,T}^{*}\right)^{2}\right] \;.
\end{eqnarray*}
The first term is the minimal prediction error that one would achieve with the
conditional expectation (which requires the true distribution of the whole
process).  Furthermore, inserting
$\widehat{X}_{d,t,T}^{*}=\left(\btheta_{t,T}^{*}\right)^{\prime}\mathbf{X}_{t-1,T}$
and using the Minkowskii and Cauchy-Schwartz inequalities, 
the square root of the second term can be bounded as
\begin{multline*} 
\left(\EE\left[\left(\widetilde{\btheta}_{t,T}^{\prime}\mathbf{X}_{t-1,T}-\widehat{X}_{t,T}^{*}\right)^{2}\right]\right)^{1/2}
\leq \left(\EE\left[\left(\widehat{X}_{d,t,T}^{*}-\widehat{X}_{t,T}^{*}\right)^2\right]\right)^{1/2}\\
+\left(\EE\left[\left\|\mathbf{X}_{t-1,T}\right\|^4\right]\right)^{1/4} 
\,\left(\EE\left[\left\|\widetilde{\btheta}_{t,T}-\btheta_{t,T}^{*}\right\|^4\right]\right)^{1/4}
 \;.
\end{multline*}
The first term in the upper bound is due to the approximation of the best
predictor by the best linear predictor of order $d$ and can only be improved by
increasing $d$. Note that, in the case of the TVAR($p$) model with $p\leq d$,
this error term vanishes.  The quantity $\EE[\|\mathbf{X}_{t-1,T}\|^4]$ is
typically bounded by a constant independent of $(t,T)$ over the class of
processes under consideration. Hence, for a given $d$, the control of the
prediction risk boils down to the control of the estimation risk
$\EE[\|\widetilde{\btheta}_{t,T}-\btheta_{t,T}^{*}\|^4]$.

To do so, we can further decompose the loss as
\begin{eqnarray}\label{eq:decomp-error-estimation}
\left\|\widetilde{\btheta}_{t,T}-\btheta_{t,T}^{*}\right\|\leq
\left\|\widetilde{\btheta}_{t,T}-\btheta_{t/T}\right\|+
\left\|\btheta_{t/T}-\btheta_{t,T}^{*}\right\|\;.
\end{eqnarray}
Note that the second term is a deterministic error basically accounting for the
approximation precision of the non-stationary model by a stationary one, a
bound of which is provided in Lemma~\ref{lemma:theta_star_theta_control} stated
above.

As a result of the successive error developments above, our efforts in the
following focus on controlling the estimation risk
$\EE[\|\widetilde{\btheta}_{t,T}-\btheta_{t/T}\|^4]$ uniformly over a class of
weakly locally stationary processes with given smoothness index $\beta>0$.

\subsection{Tapered Yule-Walker estimate} \label{section:Tapered_Yule-Walker_estimate}

Following \cite{Dahlhaus_Giraitis:1998}, a local empirical covariance function
is defined as follows. It relies on a real data taper function $h$
and a bandwidth $M$ which may depend on $T$.

\begin{definition}[Local empirical covariance function] \label{definition:empirical_local_covariance_function}
	Consider a function $h:[0,1]\to\R$ and $M\in 2\N^{*}$. The empirical
	local covariance function $\widehat{\gamma}_{T,M}$ with taper $h$ is defined in $\R\times\Z$ as
	\begin{eqnarray*} \label{equation:local_empirical_covariance_function}
		\widehat{\gamma}_{T,M}\left(u,\ell\right) = \frac{1}{H_{M}}\sum_{\substack{t_{1},t_{2}=1\\t_{1}-t_{2}=\ell}}^{M}h\left(\frac{t_{1}}{M}\right)h\left(\frac{t_{2}}{M}\right)X_{\lfloor uT\rfloor+t_{1}-M/2,T}X_{\lfloor uT\rfloor+t_{2}-M/2,T}\;,
	\end{eqnarray*}
	where $H_{M}=\sum_{k=1}^{M}h^{2}(k/M)\sim M\int_{0}^{1}h^{2}(x)\rmd x$
        is the normalizing factor. If $H_{M}=0$, we set
        $\widehat{\gamma}_{T,M}\left(u,\ell\right)=0$, by convention.
\end{definition}
For $h\equiv 1$ in
Definition~\ref{definition:empirical_local_covariance_function} we obtain the
classical covariance estimate for a centred sample
$\{X_s, \lfloor uT\rfloor-M/2\leq s\leq \lfloor uT\rfloor+\ell+M/2\}$.  For any
$d\geq1$, based on the local empirical covariance function
$\widehat{\gamma}_{T,M}$, the $d$-order local empirical Yule-Walker prediction
coefficients are then defined as
\begin{eqnarray}
	\widehat{\btheta}_{t,T}\left(M\right) &=& \widehat{\Gamma}^{-1}_{t,T,M}\widehat{\bm\gamma}_{t,T,M}\;, \label{equation:definition_hat_theta_Y-W}
\end{eqnarray}
where
$\widehat{\bm\gamma}_{t,T,M}=[\widehat{\gamma}_{T,M}(t/T,1)\;\ldots\;\widehat{\gamma}_{T,M}(t/T,d)]'$,
$\widehat{\Gamma}_{t,T,M}$ is the matrix of empirical covariances
$\widehat{\Gamma}_{t,T,M}=(\widehat{\gamma}_{T,M}(t/T,i-j);\,i,j=1,\ldots,d)$. 
The only way $\widehat{\Gamma}_{t,T,M}$ can be singular is when
$\widehat{\gamma}_{T,M}\left(t/T,\ell\right)=0$ for all $\ell\in\Z$ (see
Lemma~\ref{lemma:hat_theta_Y-W_bound}), in which case we just set
$\widehat{\btheta}_{t,T}\left(M\right):=0$. Hence
$\widehat{\btheta}_{t,T}\left(M\right)$ is always well defined and always
satisfies the following  (see again Lemma~\ref{lemma:hat_theta_Y-W_bound} for
the bound)
\begin{align}\label{equation:definition_hat_theta_Y-W_allcases}
	\widehat{\Gamma}_{t,T,M}\widehat{\btheta}_{t,T}\left(M\right) =
                                      \widehat{\bm\gamma}_{t,T,M}\;, \quad\text{and}\quad
\|\widehat{\btheta}_{t,T}\left(M\right)\|\leq 2^d-1\;.
\end{align}
Using this trick, we do not find it necessary to add additional assumptions on
the model to guarantee that $\widehat{\Gamma}_{t,T,M}$ is non-singular a.s., as
done for instance in \cite{Dahlhaus_Giraitis:1998}, where $\PP(X_{t,T}=0)=0$
for all $t\in\zset$ is assumed.

\section{Main results in the general framework} \label{section:main_results}
\subsection{Additional notation and assumptions}
For convenience, we introduce the following notation. Let $p>0$,
$q,r,s\in\N^{*}$, $\mu$ be a probability distribution on $\R$, $u:\R\to\R$,
$a,b:\R^{r}\to\R$, $\bm{c}\in\R^{q}$ and a collection of random matrices
$\{U_{M}\in\R^{r\times s}, M\in\N^{*}\}$. We write
\begin{enumerate}[label=(\roman*)]
	\item $U_{M} = O_{L^{p},\bm{c}}(u(M))$ if there exists $C_{p,\bm{c}}>0$, depending continuously and at most on $(p,\bm{c}')$, such that for all $M\in\N^{*}$
	\begin{eqnarray} \label{equation:fast_decreasing_moments}
		\max_{1\leq i\leq r,1\leq j\leq s}\left(\E\left[\left|U_{Mi,,j}\right|^{p}\right]\right)^{1/p} \leq C_{p,\bm{c}}\left|u\left(M\right)\right|\;,
	\end{eqnarray}
	where $U_{M,i,j}$ is the $(i,j)$-th entry of the matrix $U_{M}$.
 	\item $U_{M} = O_{L^{\bullet}(\mu),\bm{c}}(u(M))$ if $U_{M} =
          O_{L^{p},m_p,\bm{c}}(u(M))$ for all $p\in[1,\infty)$, where $m_p$ is
          a constant only depending on the absolute moments of the distribution
          $\mu$, $\int|x|^q\,\mu(\rmd x)$, $q\geq1$.
	\item $a(\bm{x}) = O_{\bm{c}}(b(\bm{x}))$ if and only if there exists a constant $C_{\bm{c}}$ depending continuously and at most on the index $\bm{c}$, such that for all $\bm{x}\in\R^{r}$ 
	\begin{eqnarray*}
		\left|a\left(\bm{x}\right)\right| \leq C_{\bm{c}}\left|b\left(\bm{x}\right)\right|\;.
	\end{eqnarray*}
\end{enumerate}

Concerning the function $h$ we have the following assumption. 
\begin{hypothesis*}{H} 
\item\label{hypothesis:h} The function $h:[0,1]\to\R$ is piecewise continuously
  differentiable, that is, for $0=u_{0}<u_{1}<\ldots<u_{N}=1$, $h$ is
  $\mathcal{C}^{1}$ on $(u_{i-1},u_{i}]$, $i=1,\ldots,N$. Moreover we assume
  $\int_0^1h^2=1$,
  $\|h\|_{\infty}=\sup_{u\in[0,1]}|h(u)|<\infty$ and
  $\|h'\|_{\infty}=\max_{1\leq i\leq N}\sup_{u\in(u_{i-1},u_{i}]}|h'(u)|<\infty$.
\end{hypothesis*}

Provided a piecewise continuously differentiable funtion $h$ (as in
~\ref{hypothesis:h}) and a local spectral density function $f$ continuously differentiable on its
first argument, we also consider the following assumption, which depends on a
constant $C>0$ and on a probability distribution $\mu$ on $\R$.   
\begin{hypothesis*}{C}
\item\label{hypothesis:gamma} 
For all $\ell\in\Z$ and $h$ satisfying~\ref{hypothesis:h}, we have, for all $t\in\zset$ and
$T\geq T_0$,  
$$\widehat{\gamma}_{T,M}\left(u,\ell\right)-\E\left[\widehat{\gamma}_{T,M}\left(u,\ell\right)\right]
  = O_{L^{\bullet}(\mu),\ell,\|h\|_{\infty},\|h'\|_{\infty},C}\left(M^{-1/2}\right)\;.$$
\end{hypothesis*}
Assumption~\ref{hypothesis:gamma} amounts to say that the tapered empirical
covariance estimator $\widehat{\gamma}_{T,M}$ from a sample of length $M$
satisfies a standard deviation rate $M^{-1/2}$ in all $L^q$-norms.  Locally
stationary processes of Example~\ref{example:locally_stationary_Dahlhaus}
satisfy it under suitable assumptions (see~\cite[Eq.~(4.4) in
Theorem~4.1]{Dahlhaus_Giraitis:1998}).  We conclude this section with a result that can be used
for processes of Example~\ref{example:non-stationary_CBS}.
\begin{theorem}\label{thm-ass-C-for-CBS}
  Let $(X_{t,T})_{t\in\Z,T\geq T_0}$ be an array of random variables defined as
  in~(\ref{eq:CBS-def}) where $(\xi_t)_{t\in\Z}$ are i.i.d. satisfying
  $\E[|\xi_{0}|^{q}]<\infty$ for all $q\geq 1$ and $\varphi^0_{t,T}$
  satisfies~(\ref{eq:CBS-def-1}) for some $(\psi_k)_{k\in\N}\in\ell^1_+(\N)$,
  $K>0$ and $r\geq1$. Assume moreover that there exist $k_0\in\N$,
  $(\zeta_k)_{k\in\N}\in\ell^1_+(\N)$ such that for all $t\in\zset$, $T\geq T_0$
  and all $\bm x,\bm x'\in\R^\N$ satisfying $x_k=x'_k$ for $1\leq k\leq k_0$,
\begin{equation}
  \label{eq:lip-cond-CBS-rep}
\left|\varphi^0_{t,T}(\bm x)-\varphi^0_{t,T}(\bm x')\right|\leq K\,
\left(\sum_{k\geq0}\zeta_k|x_{k_0+k}-x'_{k_0+k}|\right)\,\left(1+\sum_{k\geq0}\psi_k(|x_k|+|x'_k|)\right)^{r-1}\;.
\end{equation}
Suppose moreover that 
\begin{equation}
  \label{eq:lip-cond-CBS}
\sum_{k\geq0}k\zeta_k<\infty\;.  
\end{equation}
Then there exists a constant $C$ only depending on $r,K,k_0,(\psi_k)_{k\in\N}$,
$(\zeta_k)_{k\in\N}$ and the distribution of $\xi_0$  such
that~\ref{hypothesis:gamma} holds.
\end{theorem}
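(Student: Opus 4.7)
The plan is to combine a Wu-type coupling argument with the martingale decomposition and Burkholder's inequality. Set $s=\lfloor uT\rfloor-M/2$, $Z_t=X_{s+t,T}X_{s+t-\ell,T}$ and $h_t^M=h(t/M)h((t-\ell)/M)$, so that the centered empirical covariance reads
\[
\widehat{\gamma}_{T,M}(u,\ell)-\E[\widehat{\gamma}_{T,M}(u,\ell)]
=\frac{1}{H_M}\sum_t h_t^M(Z_t-\E Z_t)\;.
\]
By a Riemann-sum approximation using~\ref{hypothesis:h} one has $H_M=M+O(\|h\|_\infty\|h'\|_\infty)$, while $|h_t^M|\leq\|h\|_\infty^2$ and $\sum_t(h_t^M)^2\leq M\|h\|_\infty^4$, so it suffices to bound the $L^q$-norm of the inner sum by a constant times $\sqrt M$ for every $q\geq 1$.

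For the coupling step, fix $t,j\geq 0$ and introduce an independent copy $(\xi'_r)_{r\in\Z}$ of $(\xi_r)_{r\in\Z}$. Let $Z_t^{(j)}$ be the variable obtained from $Z_t$ by replacing $\xi_{s+t-j}$ by $\xi'_{s+t-j}$. When $j\geq k_0$, condition~(\ref{eq:lip-cond-CBS-rep}) applied to $\bm x=(\xi_{s+t-k})_{k\geq 0}$ and its single-coordinate perturbation gives
\[
\bigl|X_{s+t,T}-X_{s+t,T}^{(j)}\bigr|\leq K\,\zeta_{j-k_0}\,|\xi_{s+t-j}-\xi'_{s+t-j}|\,\bigl(1+\sum_k\psi_k(|\xi_{s+t-k}|+|\xi'_{s+t-k}|)\bigr)^{r-1}\;,
\]
so by Hölder's inequality, the finiteness of every moment of $\xi_0$, and $\sum_k\psi_k<\infty$, we obtain $\|X_{s+t,T}-X_{s+t,T}^{(j)}\|_{2q}\leq C_q\,\zeta_{j-k_0}$. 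For $j<k_0$ the trivial bound $\leq 2\|X_{s+t,T}\|_{2q}$ together with~(\ref{eq:CBS-def-1}) supplies a uniform constant. Noting that $X_{s+t-\ell,T}$ is unaffected by $\xi_{s+t-j}$ when $j<\ell$ (and satisfies an analogous estimate at shifted lag when $j\geq\ell$), a product-rule expansion of $Z_t-Z_t^{(j)}$ combined with Cauchy--Schwarz yields the physical dependence bound $\|Z_t-Z_t^{(j)}\|_q\leq C_{q,\ell}\,\delta_j$, uniformly in $t$ and $T$, with $(\delta_j)_{j\geq 0}\in\ell^1(\N)$.

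For the martingale step, set $\mathcal{F}_\tau=\sigma(\xi_r,\,r\leq\tau)$ and telescope $Z_t-\E Z_t=\sum_{j\geq 0}D_{t,j}$ with $D_{t,j}=\E[Z_t\mid\mathcal{F}_{s+t-j}]-\E[Z_t\mid\mathcal{F}_{s+t-j-1}]$. The coupling identity $D_{t,j}=\E[Z_t-Z_t^{(j)}\mid\mathcal{F}_{s+t-j}]$ yields $\|D_{t,j}\|_q\leq C_{q,\ell}\,\delta_j$. For each fixed $j$, $(D_{t,j})_t$ is a martingale difference sequence with respect to $(\mathcal{F}_{s+t-j})_t$, hence Burkholder's inequality (for $q\geq 2$, extended to $q\in[1,2)$ by Jensen) applied to the weighted sum gives
\[
\Bigl\|\sum_t h_t^M D_{t,j}\Bigr\|_q\leq C_q\Bigl(\sum_t(h_t^M)^2\|D_{t,j}\|_q^2\Bigr)^{1/2}\leq C_{q,\ell}\,\|h\|_\infty^2\sqrt M\,\delta_j\;.
\]
Summing over $j$ by Minkowski's inequality, using $\sum_j\delta_j<\infty$, and dividing by $H_M\sim M$ produces the claimed $O(M^{-1/2})$ bound in every $L^q$-norm.

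The main obstacle is the coupling estimate in the second paragraph: when $r>1$, the Lipschitz condition produces the dominating factor $(1+\sum_k\psi_k(|\xi_{s+t-k}|+|\xi'_{s+t-k}|))^{r-1}$, which must be controlled in high $L^p$-norms via Hölder's inequality while tracking continuous dependence of the constants on $q$, moments of $\xi_0$, and the parameters $K,k_0,r,(\psi_k),(\zeta_k)$. Everything else, including the coupling--martingale identity and Burkholder's inequality, is classical. Note that only the summability $\sum_k\zeta_k<\infty$ is actually required here; the stronger assumption $\sum_k k\zeta_k<\infty$ in the statement is presumably needed elsewhere in the paper (e.g.\ for regularity of the limiting covariance structure).
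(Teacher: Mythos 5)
Your proof is correct, but it takes a genuinely different route from the paper's. The paper applies the Burkhölder-type maximal inequality of \cite[Proposition~5.4]{Dedecker_Doukhan_Lang:2007} directly to the non-stationary sequence $Z_t$, which reduces the problem to bounding the \emph{projective} dependence sum $\sup_t\sum_{k\geq0}\|\E[Z_{t+k}\mid\mathcal{F}_t]-\E[Z_{t+k}]\|_p$; each term of that sum is controlled by coupling the whole past $(\xi_{t-j})_{j\geq0}$ with an independent copy, which costs a tail sum $\sum_{j\geq k-k_0}\tilde\zeta_j$, and summing these tails over $k$ is exactly where the hypothesis~(\ref{eq:lip-cond-CBS}), $\sum_k k\zeta_k<\infty$, is consumed. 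You instead expand each $Z_t-\E[Z_t]$ into the martingale increments $D_{t,j}$ obtained by revealing one innovation at a time (Wu's physical dependence measure), exchange the two sums, apply the classical Burkholder inequality to the fixed-$j$ martingale difference array $(D_{t,j})_t$, and sum in $j$ by Minkowski; this only requires $\sum_j\delta_j<\infty$, i.e.\ $\sum_k\zeta_k<\infty$, and all the individual steps (the identity $D_{t,j}=\E[Z_t-Z_t^{(j)}\mid\mathcal{F}_{s+t-j}]$, the single-coordinate Lipschitz bound from~(\ref{eq:lip-cond-CBS-rep}) with the Hölder control of the factor $(1+\sum_k\psi_k(|x_k|+|x'_k|))^{r-1}$, the product-rule bound for $Z_t-Z_t^{(j)}$) check out. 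The trade-off is that your argument proves a slightly stronger statement under a weaker hypothesis, at the price of a somewhat longer double-summation bookkeeping, whereas the paper's citation of a ready-made inequality is shorter but forces the extra moment condition on $(\zeta_k)$. One small correction to your closing remark: condition~(\ref{eq:lip-cond-CBS}) is not ``needed elsewhere in the paper''; it is used precisely, and only, in the paper's own proof of this theorem, where the double sum $\sum_{k>k_0+|\ell|}\sum_{j\geq k-k_0}\tilde\zeta_j=\sum_{j>|\ell|}(j-|\ell|)\tilde\zeta_j$ must be finite. Your proof shows that this hypothesis could in fact be dropped from the statement.
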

The proof is postponed to 
Appendix~\ref{sec:proof-CBS-locally-stationary-proof}.

\subsection{Bound of the estimation risk}
Our first result on the estimation risk is a uniform approximation for the
estimation error of $\widehat{\btheta}_{t,T}(M)$.

\begin{theorem} \label{theorem:thetas_difference_development} Suppose that
  Assumption~\ref{hypothesis:locally_stationary_with_f} holds with some
  $\beta>0$, $f_{-}>0$ and $R>0$, and let $h:[0,1]\to\R$
  satisfying~\ref{hypothesis:h}.  Let $k\in\N$ and $\alpha\in(0,1]$ be uniquely
  defined by the relation $\beta=k+\alpha$. Suppose that
  Assumption~\ref{hypothesis:gamma} holds for some constant $C>0$ and
  distribution $\mu$.  Then, for any $d\geq1$, the estimator
  $\smash{\widehat{\btheta}_{t,T}(M)}$ defined
  by~(\ref{equation:definition_hat_theta_Y-W}) satisfies
	\begin{eqnarray} \label{equation:thetas_difference_development}
		\widehat{\btheta}_{t,T}\left(M\right)-\btheta_{t/T} = \sum_{\ell=1}^{k}\bm a_{h,f,\ell}\left(\frac{M}{T}\right)^{\ell}
		+ O_{d,f_{-},\|h\|_{\infty},\|h'\|_{\infty},\beta,R}\left(\frac{1}{M}+\left(\frac{M}{T}\right)^{\beta}\right) + \bm v_{M} \;,
	\end{eqnarray}
	where $\bm a_{h,f,\ell}\in\R^d$ depends only on $h$, $f$ and $\ell$ and
        $\bm v_{M} =
        O_{L^{\bullet}(\mu),d,f_{-},\|h\|_{\infty},\|h'\|_{\infty},\beta,R,C}(M^{-1/2})$. Moreover,
        if $h(x)=h(1-x)$ for $x\in[0,1]$, then  $\bm a_{h,f,1}=0$.
\end{theorem}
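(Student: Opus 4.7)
My approach starts from the algebraic identity $\widehat{\btheta}_{t,T}(M)-\btheta_{t/T}=\widehat{\Gamma}_{t,T,M}^{-1}\bigl[(\widehat{\bm\gamma}_{t,T,M}-\bm\gamma_{t/T})-(\widehat{\Gamma}_{t,T,M}-\Gamma_{t/T})\,\btheta_{t/T}\bigr]$, valid when $\widehat{\Gamma}_{t,T,M}$ is invertible. Off that event, the a.s.\ bound $\|\widehat{\btheta}_{t,T}(M)\|\leq 2^d-1$ from~(\ref{equation:definition_hat_theta_Y-W_allcases}), combined with high-order Markov tail estimates on $\|\widehat{\Gamma}_{t,T,M}-\Gamma_{t/T}\|$ supplied by~\ref{hypothesis:gamma}, allows me to absorb the contribution of the singular event into $\bm v_M$. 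The core task is then to expand both $\widehat{\Gamma}_{t,T,M}-\Gamma_{t/T}$ and $\widehat{\bm\gamma}_{t,T,M}-\bm\gamma_{t/T}$ into deterministic polynomials in $M/T$ of degree $k$ plus controlled remainders, and to linearise $\widehat{\Gamma}_{t,T,M}^{-1}$ via a Neumann series around $\Gamma_{t/T}^{-1}$.

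For the bias of a generic entry $\E[\widehat{\gamma}_{T,M}(t/T,\ell)]$, Assumption~\ref{hypothesis:locally_stationary_with_f} lets me swap $\gamma^{*}$ for $\gamma$ at cost $O(T^{-\min(1,\beta)})$, which is dominated by $1/M+(M/T)^\beta$ whenever $M\leq T$. The resulting deterministic quantity equals $H_{M}^{-1}\sum_{t_1-t_2=\ell}h(t_1/M)h(t_2/M)\,\gamma\bigl(t/T+(t_1-M/2)/T,\ell\bigr)$. Since $\gamma(\cdot,\ell)\in\Lambda_{1}(\beta,R)$ with $\beta=k+\alpha$, Taylor's formula gives $\gamma(u+v,\ell)=\sum_{j=0}^{k}\tfrac{v^j}{j!}\partial_u^j\gamma(u,\ell)+O_{R}(|v|^\beta)$ uniformly on $|v|\leq M/(2T)$. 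Plugging this into the tapered sum and using that $h$ is piecewise $\mathcal{C}^{1}$ (so that $H_{M}^{-1}\sum h(t_1/M)h(t_2/M)((t_1-M/2)/T)^j$ approximates $(M/T)^j\int_0^1 h^2(x)(x-\tfrac{1}{2})^j\,dx/\int_0^1 h^2$ with Riemann-sum error $O(1/M)$), each $j\in\{1,\ldots,k\}$ contributes $(M/T)^j$ times a scalar that depends only on $h$, $j$ and $\partial_u^j\gamma(t/T,\ell)$, while the Taylor remainder yields $O_{R}((M/T)^\beta)$. Applied componentwise, this produces a matrix polynomial for $\E\widehat{\Gamma}_{t,T,M}-\Gamma_{t/T}$ and a vector polynomial for $\E\widehat{\bm\gamma}_{t,T,M}-\bm\gamma_{t/T}$.

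Substituting these polynomial expansions together with the centred stochastic remainders $\widehat{\Gamma}_{t,T,M}-\E\widehat{\Gamma}_{t,T,M}$ and $\widehat{\bm\gamma}_{t,T,M}-\E\widehat{\bm\gamma}_{t,T,M}$ (which are $O_{L^{\bullet}(\mu),\ldots}(M^{-1/2})$ by~\ref{hypothesis:gamma}) into the algebraic identity, and using $\widehat{\Gamma}_{t,T,M}^{-1}=\sum_{n\geq 0}\bigl(\Gamma_{t/T}^{-1}(\Gamma_{t/T}-\widehat{\Gamma}_{t,T,M})\bigr)^n\Gamma_{t/T}^{-1}$, I collect the deterministic contributions of order $\ell\in\{1,\ldots,k\}$ into the vectors $\bm a_{h,f,\ell}$ announced in~(\ref{equation:thetas_difference_development}). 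Higher-order Neumann contributions, as well as cross-products between $O(M^{-1/2})$ stochastic terms and $(M/T)^\ell$ factors, are absorbed in $O(1/M+(M/T)^\beta)+\bm v_M$ via H\"older's inequality in $L^p$. For the symmetric case $h(x)=h(1-x)$, the order-$1$ coefficient reduces to a multiple of $\int_0^1 h^2(x)(x-\tfrac{1}{2})\,dx$, which vanishes by symmetry of $h^2$ about $1/2$; hence $\bm a_{h,f,1}=0$. The main obstacle is the uniform $L^p$ control of the matrix inversion: the Neumann expansion only converges on $\{\|\widehat{\Gamma}_{t,T,M}-\Gamma_{t/T}\|\leq f_-/2\}$, so careful Markov-type estimates on high moments of $\|\widehat{\Gamma}_{t,T,M}-\Gamma_{t/T}\|$ (again via~\ref{hypothesis:gamma}) together with the almost-sure bound~(\ref{equation:definition_hat_theta_Y-W_allcases}) are required to show that the complementary event contributes only $O_{L^{\bullet}(\mu)}(M^{-1/2})$, hence is absorbed in $\bm v_M$.
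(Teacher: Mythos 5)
Your proposal is correct in outline, and its bias analysis is the paper's: as in Theorem~\ref{theorem:expectation_J_M}, you Taylor-expand the local covariance (equivalently the local spectral density) in rescaled time, use the piecewise-$\mathcal{C}^1$ taper to approximate the tapered sums by $c_{h,\ell}=\int_0^1h^2(x)(x-\tfrac12)^\ell\,\rmd x$ up to $O(1/M)$ (Lemma~\ref{lem:h-approx-riemann}), note $c_{h,1}=0$ for symmetric $h$, and invoke~\ref{hypothesis:gamma} for the $O_{L^{\bullet}(\mu)}(M^{-1/2})$ fluctuations. Where you genuinely diverge is the inversion step. The paper never inverts $\widehat{\Gamma}_{t,T,M}$: it uses the exact finite-order identity of Lemma~\ref{lemma:thetas_difference_development}, which needs only $\Gamma_{t/T}$ invertible and the relations $\Gamma\btheta=\bm\gamma$, $\widehat{\Gamma}\widehat{\btheta}=\widehat{\bm\gamma}$ --- the latter holding by construction even in the singular case, see~(\ref{equation:definition_hat_theta_Y-W_allcases}) --- and controls the single remainder $(\Gamma^{-1}(\Gamma-\widehat{\Gamma}))^{k+1}(\widehat{\btheta}-\btheta)$ via the a.s.\ bounds $\|\btheta_{t/T}\|,\|\widehat{\btheta}_{t,T}(M)\|\leq 2^d-1$ (Lemmas~\ref{lemma:roots_characteristic_polynomial_YW} and~\ref{lemma:hat_theta_Y-W_bound}) and H\"older. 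Your route (explicit $\widehat{\Gamma}^{-1}$, full Neumann series, event splitting) can be completed, but it incurs exactly the probabilistic bookkeeping the paper's trick avoids, and one claim needs care: the bad event does not always contribute $O_{L^{\bullet}(\mu)}(M^{-1/2})$. Its probability is super-polynomially small in $M$ (Markov plus~\ref{hypothesis:gamma}) only when the deterministic bias $\E\widehat{\Gamma}-\Gamma$ is below a threshold of order $f_-$, i.e.\ essentially when $M/T$ and $1/M$ are small; otherwise the bad event can have probability near one and the a.s.\ bounded error must instead be absorbed into the $O(1/M+(M/T)^{\beta})$ term, which is then of order one. With that case distinction, and truncating the Neumann tail at order $k+1$ on the good event, your argument yields the same coefficients $\bm a_{h,f,\ell}$; the paper's finite identity simply makes all of this unnecessary.
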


The proof is postponed to Appendix~\ref{section:proof_theorem_thetas_difference_development}. 
\begin{remark}
  In~(\ref{equation:thetas_difference_development}), the choice of
  the taper may influence the rate of convergence through the constant
  $\bm a_{h,f,1}$, which vanishes if the taper is symmetric, that is, if
  $h(x)=h(1-x)$ for $x\in[0,1]$. Other constants depend on the choice of the
  taper but one cannot choose tapers that ensure a further systematic
  improvement of the rate. The reason is given by the definition of the
  constant $c_{h,2}$ appearing in the proof of
  Theorem~\ref{theorem:expectation_J_M}, which implies $c_{h,2}>0$ for all
  tapers $h$. Consequently, for any taper $h$, one have that
  $\bm a_{h,f,2}\neq0$, except perhaps for some particular local density
  functions $f$. Hence, as far as rates of convergence are concerned, the only
  important property of the taper is that of being symmetric.
\end{remark}
Theorem~\ref{theorem:thetas_difference_development} suggests to combine several
$\widehat{\btheta}_{t,T}(M)$ to obtain a more accurate estimation by canceling
out the first $k$ bias terms
in~(\ref{equation:thetas_difference_development}). This technique was already
used for eliminate one term of bias in
\cite[Theorem~8]{Moulines_Priouret_Roueff:2005}. It is inspired by the
Romberg's method in numerical analysis (see
\cite{Baranger_Brezinski:1991}). Let
$\bm\omega=[\omega_{0}\;\ldots\;\omega_{k}]'\in\R^{k+1}$, be the solution of
the equation
\begin{eqnarray} \label{equation:alpha}
	A\bm\omega=\bm e_{1} \;, 
\end{eqnarray}
where $\bm e_{1}=[1\;0\;\dots\;0]'$ is the $\R^{k+1}$- vector having a $1$ in
the first position and zero everywhere else and $A$ is a $(k+1)\times(k+1)$
matrix with entries $A_{i,j}=2^{i\,j}$ for $0\leq i,j\leq k$.

\begin{theorem} \label{theorem:combined_thetas_difference_development} Under
  the same assumptions as Theorem~\ref{theorem:thetas_difference_development},
  the estimator
  \begin{equation}
    \label{eq:bias-reduced-estimator}
  \widetilde{\btheta}_{t,T}(M)=\sum_{j=0}^{k}\omega_{j}\widehat{\btheta}_{t,T}(2^{j}M)\;,    
  \end{equation}
  with $\bm\omega$ defined by~(\ref{equation:alpha}), satisfies
	\begin{eqnarray} \label{equation:combined_thetas_difference_development}
		\widetilde{\btheta}_{t,T}\left(M\right)-\btheta_{t/T} = O_{d,f_{-},\|h\|_{\infty},\|h'\|_{\infty},\beta,R}\left(\frac{1}{M}+\left(\frac{M}{T}\right)^{\beta}\right) +O_{L^{\bullet}(\mu),d,f_{-},\|h\|_{\infty},\|h'\|_{\infty},\beta,R,C}(M^{-1/2}) \;.
	\end{eqnarray}
\end{theorem}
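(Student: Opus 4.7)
The proof is essentially a linear-combination-of-expansions argument: apply Theorem~\ref{theorem:thetas_difference_development} to each summand $\widehat{\btheta}_{t,T}(2^j M)$ for $j=0,\ldots,k$, then exploit the defining equations of $\bm\omega$ to cancel all polynomial bias terms in $M/T$ while preserving the remainder order. The plan is to write
$$
\widetilde{\btheta}_{t,T}(M)-\btheta_{t/T}
=\sum_{j=0}^{k}\omega_{j}\bigl(\widehat{\btheta}_{t,T}(2^{j}M)-\btheta_{t/T}\bigr)
+\Bigl(\sum_{j=0}^{k}\omega_{j}-1\Bigr)\btheta_{t/T}\;,
$$
and to observe that the row $i=0$ of the system $A\bm\omega=\bm e_{1}$ reads $\sum_{j=0}^{k}\omega_{j}=1$, so the second term on the right vanishes.

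Next I would substitute the expansion of Theorem~\ref{theorem:thetas_difference_development} into each $\widehat{\btheta}_{t,T}(2^{j}M)-\btheta_{t/T}$. The deterministic bias contribution becomes
$$
\sum_{\ell=1}^{k}\bm a_{h,f,\ell}\Bigl(\tfrac{M}{T}\Bigr)^{\ell}\sum_{j=0}^{k}\omega_{j}\,2^{j\ell}\;,
$$
and by the $i=\ell$ row of $A\bm\omega=\bm e_{1}$, i.e.\ $\sum_{j=0}^{k}A_{\ell,j}\omega_{j}=\sum_{j=0}^{k}2^{\ell j}\omega_{j}=0$ for $\ell\geq 1$, this entire sum vanishes. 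Hence the only surviving contributions come from the two remainder terms.

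For the deterministic remainder, the bound $O(1/(2^j M)+(2^j M/T)^\beta)$ for the $j$-th summand is dominated by $O(1/M+(M/T)^\beta)$ up to the constant $2^{\max(k,k\beta)}$, which depends only on $\beta$ (since $k=\lfloor\beta\rfloor$); multiplying by $|\omega_j|$ and summing over $j$ incorporates the $\bm\omega$-dependent factor into the $O_{d,f_-,\|h\|_\infty,\|h'\|_\infty,\beta,R}$-constant. For the stochastic remainders $\bm v_{2^j M}$, a finite linear combination of $O_{L^{\bullet}(\mu)}(M^{-1/2})$ terms indexed by $j=0,\ldots,k$ is again of the form $O_{L^{\bullet}(\mu),\ldots}(M^{-1/2})$ by the Minkowski inequality applied entrywise, and the constants $\omega_j$ are absorbed, giving~(\ref{equation:combined_thetas_difference_development}).

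The only genuine point of care, and the main (though small) obstacle, is the verification that $A\bm\omega=\bm e_{1}$ is indeed consistent and solvable so that $\bm\omega$ is well defined. Since $A_{i,j}=2^{ij}=(2^{i})^{j}$, the matrix is a Vandermonde matrix in the distinct nodes $2^{0},2^{1},\ldots,2^{k}$, hence non-singular; this guarantees both the existence and uniqueness of $\bm\omega$ and depends only on $k$ (equivalently, on $\beta$), so all constants introduced by $\|\bm\omega\|_{1}$ fit into the asymptotic constants already listed in the statement. This completes the reduction of bias to order $1/M+(M/T)^{\beta}$ while keeping the stochastic rate $M^{-1/2}$ unchanged.
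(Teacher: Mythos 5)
Your proposal is correct and follows essentially the same route as the paper's (much terser) proof: substitute the expansion of Theorem~\ref{theorem:thetas_difference_development} into each summand, use the rows of $A\bm\omega=\bm e_1$ (row $0$ giving $\sum_j\omega_j=1$ and rows $\ell\geq1$ cancelling the bias terms), absorb the finitely many remainders, and invoke the Vandermonde structure of $A$ for the existence of $\bm\omega$. The only discrepancy is immaterial: the paper's proof sketch says to replace $M$ by $M/2^j$ while the estimator is defined with $2^jM$; your version is the one consistent with~(\ref{eq:bias-reduced-estimator}) and the cancellation works identically in either convention.
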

The proof is postponed to
Appendix~\ref{section:proof_theorem_combined_thetas_difference_development}.
\begin{remark}
  If $h(x)=h(1-x)$ for $x\in[0,1]$ then the first order term
  of~(\ref{equation:thetas_difference_development}) is zero; in this case we
  can remove the term $j=k$ in~(\ref{eq:bias-reduced-estimator}) and define
  $\bm\omega=[\omega_{0}\;\ldots\;\omega_{k-1}]'\in\R^{k}$
  by~(\ref{equation:alpha}) with the second row and last column of $A$ removed
  and  $\bm e_{1}=[1\;0\;\dots\;0]'\in\R^{k}$.
\end{remark}
It is straightforward to check that the optimal bandwidth for minimizing the
order of the right term of
Equation~(\ref{equation:combined_thetas_difference_development}) is $M\propto
T^{2\beta/(2\beta+1)}$, yielding the next result.

\begin{corollary} \label{corollary:tilde_theta_star}
	Let $\beta, R, f_{-}>0$ and $h:[0,1]\to\R$. Let $k\in\N$ and
        $\omega\in(0,1]$ be uniquely defined such that $\beta=k+\omega$ and
        set $M:=2\lfloor T^{2\beta/(2\beta+1)}\rfloor$ in the following. Suppose that
        Assumptions~\ref{hypothesis:locally_stationary_with_f},~\ref{hypothesis:h}
        and~\ref{hypothesis:gamma} hold. Let
        $\smash{\widetilde{\btheta}_{t,T}(M)}$ be obtained as in
        Theorem~\ref{theorem:combined_thetas_difference_development}. Then, for
        any $q>0$ there exist a constant $C_0$ only depending on $h,q,d,f_{-},R,\mu$
        and $\beta$, and a $T_{0}>0$ only depending on $d,R,\beta$,
        $f_{-}$ and $C$ such that, for all $T\geq T_{0}$ and all $t\in\Z$,
	\begin{eqnarray} \label{equation:bound_theta_tilde_theta_star}
          \left(\E\left[\left\|\widetilde{\btheta}_{t,T}\left(M\right)-\btheta_{t/T}\right\|^{q}\right]\right)^{1/q}
          \leq C_0\,T^{-\beta/(2\beta+1)}\;.
	\end{eqnarray}
\end{corollary}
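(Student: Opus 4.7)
The plan is that the corollary is essentially a bias–variance balancing, so the result follows by plugging the chosen bandwidth $M=2\lfloor T^{2\beta/(2\beta+1)}\rfloor$ into the bound provided by Theorem~\ref{theorem:combined_thetas_difference_development} and verifying that each resulting term is $O(T^{-\beta/(2\beta+1)})$ in the relevant $L^q$ sense. All the analytic work has been done upstream; here one merely optimizes the bandwidth.

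First I would split the error from Theorem~\ref{theorem:combined_thetas_difference_development} as
$$\widetilde{\btheta}_{t,T}(M)-\btheta_{t/T} = \bm r_{t,T} + \bm v_{t,T},$$
where the deterministic remainder satisfies $\|\bm r_{t,T}\|\leq C_1(M^{-1}+(M/T)^\beta)$ with $C_1$ depending only on $d,f_-,\|h\|_\infty,\|h'\|_\infty,\beta,R$, and where $\bm v_{t,T}$ is the $O_{L^{\bullet}(\mu),\ldots}(M^{-1/2})$ stochastic part. Unfolding the $L^{\bullet}(\mu)$ notation supplies, for every integer $q'\geq 1$, a bound $(\E\|\bm v_{t,T}\|^{q'})^{1/q'}\leq C_{q'}M^{-1/2}$ with $C_{q'}$ depending on $d,f_-,\|h\|_\infty,\|h'\|_\infty,\beta,R,C$ and on the absolute moments of $\mu$.

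Second, I take the $L^q$ norm of the triangle inequality $\|\widetilde{\btheta}_{t,T}(M)-\btheta_{t/T}\|\leq\|\bm r_{t,T}\|+\|\bm v_{t,T}\|$. For $q\geq 1$ this is Minkowski; for $0<q<1$ one first uses the Jensen-type monotonicity $(\E|Y|^q)^{1/q}\leq(\E|Y|^2)^{1/2}$ to reduce to $q=2$. In both regimes one obtains
$$\left(\E\left\|\widetilde{\btheta}_{t,T}(M)-\btheta_{t/T}\right\|^{q}\right)^{1/q}\leq C\left(\frac{1}{M}+\left(\frac{M}{T}\right)^{\beta}+\frac{1}{\sqrt{M}}\right),$$
uniformly in $t\in\Z$ and $T\geq T_0$, the $T_0$ coming from the underlying theorems (it is where the uniform invertibility of $\Gamma_{t/T}$, controlled through $f_-$, becomes effective). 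Inserting $M=2\lfloor T^{2\beta/(2\beta+1)}\rfloor$, one has $T^{2\beta/(2\beta+1)}\leq M\leq 2T^{2\beta/(2\beta+1)}$ for $T$ large enough, hence $M^{-1}\leq T^{-2\beta/(2\beta+1)}$, $(M/T)^\beta\leq 2^\beta T^{-\beta/(2\beta+1)}$, and $M^{-1/2}\leq T^{-\beta/(2\beta+1)}$. Since $2\beta/(2\beta+1)\geq\beta/(2\beta+1)$ for all $\beta>0$, the $1/M$ contribution is dominated and the overall rate is $O(T^{-\beta/(2\beta+1)})$, as claimed.

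The only delicate points are (i) checking that the chosen $M$ is admissible (it is even and positive as soon as $T$ exceeds a threshold depending only on $\beta$, which may be absorbed into $T_0$) and (ii) carefully tracking the dependencies of the constants so that the final $C_0$ matches the statement, namely $C_0=C_0(h,q,d,f_-,R,\mu,\beta)$ while $T_0=T_0(d,R,\beta,f_-,C)$; the $h$ and $R$ dependence comes from the deterministic bias constant, whereas the $q$ and $\mu$ dependence comes through the $L^{\bullet}(\mu)$ bound on $\bm v_{t,T}$. There is no substantive analytic obstacle in this step: the main effort was carried out in Theorems~\ref{theorem:thetas_difference_development} and~\ref{theorem:combined_thetas_difference_development}, and the corollary is essentially bookkeeping plus the standard choice of bandwidth.
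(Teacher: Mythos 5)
Your proposal is correct and follows the same route as the paper, which proves the corollary simply by inserting the bandwidth $M=2\lfloor T^{2\beta/(2\beta+1)}\rfloor$ into the bound of Theorem~\ref{theorem:combined_thetas_difference_development} and checking that each term is $O(T^{-\beta/(2\beta+1)})$; your bookkeeping of the $L^q$ norms (Minkowski for $q\geq1$, Lyapunov for $q<1$) and of the constants' dependencies is the straightforward verification the paper leaves implicit.
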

It is interesting to note that in the
decomposition~(\ref{eq:decomp-error-estimation}), the bound of the error term
$\|\widetilde{\btheta}_{t,T}\left(M\right)-\btheta_{t/T}\|$
in~(\ref{equation:bound_theta_tilde_theta_star}) always has a slower decaying rate
that that of the bound of the (deterministic) error term
$\|\btheta_{t/T}-\btheta_{t,T}^*\|$ in
Lemma~\ref{lemma:theta_star_theta_control}.

\section{Application to TVAR processes} \label{section:TVAR}

TVAR processes (see Example~\ref{example:tvar}) are a handful model to
illustrate our results. Under suitable assumptions, they have the specific
property that, when $d= p$, the linear predictor coefficients in
$\btheta_{t,T}^{*}\in\R^{d}$ as defined by
Equation~(\ref{equation:btheta_star_definition}) coincide with the time-varying
autoregressive coefficients given by $\btheta(t/T)$ of the TVAR($p$)
equation~(\ref{equation:TVAR-p}) and also with the local solution
$\btheta_{t/T}$ of the Yule-Walker equations defined
by~(\ref{equation:definition_theta_Y-W}),
see~(\ref{eq:pred-tvar-allcoeff-same}) below. 

In the following, we introduce some smoothness assumptions on the time-varying
parameters, similar to (and actually yielding) the one required on the local
spectral density in~\ref{definition:locally_stationary}. Additional stability
conditions are also required, based on the following definitions. For
$\bm\theta:\R\rightarrow\R^{p}$, $u\mapsto
\begin{bmatrix}\theta_{1}(u)&\dots&\theta_{p}(u)\end{bmatrix}'$
we define the time varying autoregressive
polynomial by
$$
\btheta(z;u) := 1-\sum_{j=1}^{p}\theta_{j}(u)z^{j}\;.  
$$
Let us define,  for any $p\in\N^*$ and $\delta>0$, 
\begin{align}
  \label{eq:definition:s_d-simple}
s_{(p)}\left(\delta\right)&:=\left\{\btheta=\begin{bmatrix}\theta_{1}&\dots&\theta_{p}\end{bmatrix}'\in\R^{p}\;\text{s.t.}\;1-\sum_{j=1}^{p}\theta_{j}z^j\neq 0, \forall
|z|<\delta^{-1}\right\}\;, \\ 
  \label{definition:s_d}
  s_{p}(\delta)&:=\left\{\btheta:\R\to s_{(p)}\left(\delta\right)\right\}\\
\nonumber
                          &=\{\btheta:\R\to\R^{p}\;\text{s.t.}\;\btheta(z;u)\neq 0, \forall
|z|<\delta^{-1}, u\in\R\}\;.
\end{align}
Define, for $\beta>0$, $R>0$, $\delta\in(0,1)$, $\rho\in[0,1]$ and $\sigma_+>0$,
the class of parameters
\begin{eqnarray*}
\mathcal{C}\left(\beta,R,\delta,\rho,\sigma_{+}\right)=\left\{(\boldsymbol{\theta},\sigma):\R\to\R^p\times[\rho\sigma_+,\sigma_+]\;\text{s.t.}\;\boldsymbol{\theta}\in\Lambda_p(\beta,R)\cap
s_p(\delta),\sigma\in\Lambda_1(\beta,R)\right\}\;.
\end{eqnarray*}
The first result to provide sufficient conditions on the TVAR coefficients for
the existence of a stable solution of the TVAR equations goes back to
\cite{Kunsch:1995}. Here we use
\cite[Proposition~1]{Giraud_Roueff_Sanchez-Perez:2015}, which guarantees the
following: given a centered i.i.d. sequence $(\xi_t)_{t\in\Z}$ with unit
variance and given the constants $\delta\in(0,1)$, $\rho\in[0,1]$, $\sigma_+>0$,
$\beta>0$ and $R>0$, there exists a large enough $T_0$ only depending on
$\delta,\beta$ and $R$ such that, for all
$(\btheta,\sigma)\in\mathcal{C}\left(\beta,R,\delta,\rho,\sigma_{+}\right)$,
there exists a unique process $(X_{t,T})_{t\in\Z,T\geq T_0}$
satisfying~(\ref{equation:TVAR-p}) for all $t\in\Z$ and $T\geq T_0$ and such
that, for all $T\geq T_0$, $X_{t,T_0}$ is bounded in probability as
$t\to-\infty$. We use this result as our definition of the TVAR process with
time varying AR coefficients
$\theta_1,\dots,\theta_p$, time varying standard deviation $\sigma$, and innovations $(\xi_{t})_{t\in\Z}$. For later
reference, we summarize this in the following assumption.
\begin{hypothesis}{M}
\item\label{hypothesis:tvar_minimax} Let $(\xi_t)_{t\in\Z}$ be an
  i.id. sequence with zero mean and unit variance. Assume that
  $(\btheta,\sigma)\in\mathcal{C}\left(\beta,R_0,\delta,\rho,\sigma_{+}\right)$
  with $\delta\in(0,1), \beta>0, R_0>0$ and $\rho\in[0,1]$. The array
  $(X_{t,T})_{t\in\Z,T\geq T_0}$ is a TVAR process as previously defined with
  time varying AR coefficients $\theta_1,\dots,\theta_p$, time varying standard
  deviation $\sigma$, and innovations $(\xi_{t})_{t\in\Z}$.
\end{hypothesis}
In this assumption the constant $T_0$ is set to have the existence and
uniqueness of the stable solution of the TVAR equation for all $T\geq T_0$. It
may change hereafter from line to line to guarantee additional properties of
the solution but always in a way where it depends at most on the constants
$\beta,R_0,\delta,\rho$ and $\sigma_{+}$.  

The following assumption can be used
to control the moments of any order of the TVAR process.
\begin{hypothesis*}{I}
	\item\label{hyp:innov-moment_minimax} For all $q>0$ the innovations  $(\xi_{t})_{t\in\Z}$
	satisfy $\E\left[\left|\xi_0\right|^q\right]<\infty$.
\end{hypothesis*}
Time varying autoregressive processes are well known to be locally stationary
under certain conditions on their parameters and moments, see
\cite[Theorem~2.3]{Dahlhaus:1996}. Adapting these results
to our context, we have the following.
\begin{theorem} \label{theorem:A_TVAR} 
  Assumption~\ref{hypothesis:tvar_minimax} implies the two following
  assertions.
  \begin{enumerate}[label=(\roman*)]
  \item\label{item:tvar-1} There
  exist constants $R$ and $T_0$ only depending on 
  $p,\delta,\sigma_+,\beta$ and $R_0$ such
  that $(X_{t,T})_{t\in\Z,T\geq T_0}$ is $(\beta,R)$-weakly locally stationary in the
  sense of Definition \ref{definition:locally_stationary} with local spectral
  density defined by
	\begin{eqnarray}
  \label{eq:tvar-local-dens}
		f\left(u,\lambda\right)=\frac{\sigma^{2}\left(u\right)}{2\pi}
\left|\btheta\left(\rme^{-\rmi\lambda}\,;\,u\right)\right|^{-2}\;.
	\end{eqnarray}
        Moreover, we have, for all
        $T\geq T_0$ and $t\in\Z$,
        \begin{equation}
          \label{eq:pred-tvar-allcoeff-same}
          \btheta(t/T)=\btheta_{t,T}^*=\btheta_{t/T}\;,
        \end{equation}
        where $\btheta_{t,T}^*$ and $\btheta_{t/T}$ are the optimal and local
        prediction coefficients respectively defined
        by~(\ref{equation:btheta_star_definition})
        and~(\ref{equation:definition_theta_Y-W}) in the case $d=p$.
      \item\label{item:tvar-1bis} If $\rho\in(0,1]$, then
        Assumption~\ref{hypothesis:locally_stationary_with_f} holds with the
        same $\beta$ and some constants $R$, $T_0$ and $f_->0$ only depending
        on $p,\delta,\rho,\sigma_+,\beta$ and $R_0$.
      \item\label{item:tvar-1ter} If $\PP(\xi_0=x)=0$ for all $x\in\R$, then
        $\PP(X_{t,T}=0)=0$ for all $t\in\Z$ and $T\geq T_0$.
      \item\label{item:tvar-2} If~\ref{hyp:innov-moment_minimax} holds, then $(X_{t,T})_{t\in\Z,T\geq T_0}$
  satisfies Assumption~\ref{hypothesis:gamma} with $C$ only depending on $R_0$,
  $\beta,\delta$, $\sigma_+$ and with $\mu$ defined as the distribution of
  $\xi_0$. 
  \end{enumerate}
\end{theorem}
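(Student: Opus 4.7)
The plan is to base everything on a uniform MA($\infty$) representation of $(X_{t,T})$ and of its stationary ``frozen'' counterpart, and then to derive assertions~\ref{item:tvar-1}--\ref{item:tvar-2} as consequences of the same two quantitative bounds on these MA coefficients. Under Assumption~\ref{hypothesis:tvar_minimax}, membership of $\btheta$ in $s_p(\delta)$ and the bound $\|\btheta\|_\infty\le R_0$ force the roots of $\btheta(\,\cdot\,;u)$ to lie in $\{|z|\ge\delta^{-1}\}$ uniformly in $u$. Classical estimates on the inversion of nearly stationary AR polynomials (see \cite[Proposition~1]{Giraud_Roueff_Sanchez-Perez:2015}, inspired by \cite{Kunsch:1995}) then yield causal expansions $X_{t,T}=\sum_{k\ge 0}a_{t,T}(k)\xi_{t-k}$ and $X_t(u)=\sum_{k\ge 0}a_u(k)\xi_{t-k}$, where $X_\cdot(u)$ is the stationary AR($p$) process with parameters $(\btheta(u),\sigma(u))$, together with some $\rho_0\in(\delta,1)$ and some constant $C_0$ depending only on $p,\delta,\sigma_+,\beta,R_0$ such that
\begin{equation*}
|a_{t,T}(k)|+|a_u(k)|\le C_0\rho_0^k\,,\qquad |a_{t,T}(k)-a_{t/T}(k)|\le C_0\,T^{-\min(1,\beta)}\,\rho_0^k\,,
\end{equation*}
for all $t\in\Z$, $T\ge T_0$, $u\in\R$ and $k\ge 0$.

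Assertion~\ref{item:tvar-1} follows quickly. The right-hand side of~(\ref{eq:tvar-local-dens}) is the standard spectral density of the AR($p$) process $X_\cdot(u)$, so $\gamma(u,\ell)=\sum_k a_u(k)a_u(k+\ell)$, while the TVAR recursion gives $\gamma^*(t,T,\ell)=\sum_k a_{t,T}(k)a_{t-\ell,T}(k+\ell)$; inserting the two displayed bounds produces~(\ref{equation:gamma_gamma_star}) with a constant $R$ of the stated form. The smoothness $f(\cdot,\lambda)\in\Lambda_1(\beta,R)$ is inherited from the Hölder smoothness of $(\btheta,\sigma)$ through the smooth composition $(\btheta,\sigma)\mapsto\sigma^2\,|\btheta(\rme^{-\rmi\lambda};\cdot)|^{-2}$ on the compact stability set, with constants uniform in $\lambda$. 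For~(\ref{eq:pred-tvar-allcoeff-same}), when $d=p$ the independence of $\xi_t$ from $X_{t-1,T},\dots,X_{t-p,T}$ together with~(\ref{equation:TVAR-p}) shows that $\btheta(t/T)$ solves the normal equations defining $\btheta^*_{t,T}$, and applying the Yule--Walker identity to the frozen AR($p$) process with parameter $\btheta(t/T)$ shows the same for $\btheta_{t/T}$. Assertion~\ref{item:tvar-1bis} is immediate: if $\rho>0$ then $\sigma\ge\rho\sigma_+>0$ while $|\btheta(\rme^{-\rmi\lambda};u)|$ is uniformly bounded above (degree-$p$ polynomial with uniformly bounded coefficients), so $f$ is uniformly bounded below by some $f_->0$ depending only on $p,\delta,\rho,\sigma_+$. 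For~\ref{item:tvar-1ter}, conditioning on $(X_{s,T})_{s\le t-1}$ and using $X_{t,T}=\sigma(t/T)\xi_t+\sum_j\theta_j(t/T)X_{t-j,T}$ reduces the claim to the atomlessness of $\sigma(t/T)\xi_t$ wherever $\sigma(t/T)\ne 0$; an MA-expansion argument picking the first-in-time non-zero coefficient $a_{t,T}(k)$ covers the remaining degenerate case.

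For assertion~\ref{item:tvar-2}, the plan is to recognise the TVAR process as an instance of Example~\ref{example:non-stationary_CBS} with $r=1$ and the linear shifts $\varphi^0_{t,T}(\bm x)=\sum_k a_{t,T}(k)x_k$ and $\varphi(u,\bm x)=\sum_k a_u(k)x_k$. The geometric bound gives~(\ref{eq:CBS-def-1}) with $\psi_k=C_0\rho_0^k$, the Hölder transfer gives~(\ref{eq:CBS-def-2}), and linearity of $\varphi^0_{t,T}$ makes condition~(\ref{eq:lip-cond-CBS-rep}) trivial for any $k_0\ge 1$ with $\zeta_k=C_0\rho_0^{k_0+k}$ and the ``$r-1=0$'' factor, so~(\ref{eq:lip-cond-CBS}) holds by $\rho_0<1$. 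Assuming~\ref{hyp:innov-moment_minimax}, Theorem~\ref{thm-ass-C-for-CBS} then delivers~\ref{hypothesis:gamma} with the announced dependence of constants and $\mu$ the law of $\xi_0$. The main technical obstacle throughout is the very first step, namely establishing the uniform geometric bound on $|a_{t,T}(k)|$ and the Hölder transfer $|a_{t,T}(k)-a_{t/T}(k)|\le C_0\,T^{-\min(1,\beta)}\rho_0^k$: this is where the uniform stability radius $\delta$ and the $\Lambda_p(\beta,R_0)$-smoothness of $\btheta$ must be carefully combined, after which every subsequent step reduces to bookkeeping of geometrically convergent series.
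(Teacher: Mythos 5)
Your proposal follows essentially the same route as the paper: both rest on the causal linear (TVCBS) representation with geometrically decaying, Hölder-transferable coefficients obtained from the companion-matrix product bounds of \cite[Proposition~1]{Giraud_Roueff_Sanchez-Perez:2015}, deduce (i)--(ii) from the resulting covariance comparison together with the uniform upper and lower bounds on the AR polynomial over $s_{(p)}(\delta)$, and obtain (iv) by verifying~(\ref{eq:lip-cond-CBS-rep})--(\ref{eq:lip-cond-CBS}) with $r=1$ and invoking Theorem~\ref{thm-ass-C-for-CBS}. The only cosmetic difference is in (iii), where you condition on the immediate past to isolate $\sigma(t/T)\xi_t$ whereas the paper isolates a nonzero MA coefficient via the variance lower bound; these are minor variants of the same idea.
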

The proof is postponed to Appendix~\ref{section:results_tvar}.
Theorem~\ref{theorem:A_TVAR} basically shows that the results of
Section~\ref{section:main_results} apply to TVAR processes, as defined
by~\ref{hypothesis:tvar_minimax} provided that $\rho>0$
and~\ref{hyp:innov-moment_minimax} is assumed on the innovations. 
We specifically state the following result which provides a useful complement
to \cite[Corollary~9]{Moulines_Priouret_Roueff:2005} where the same minimax
rate is exhibited for a different estimator but only for smoothness index
$\beta\leq2$.

\begin{corollary} \label{corollary:tilde_theta_TVAR} Let
  $\delta\in(0,1), \beta>0, R>0$ and $\rho\in(0,1]$.  Suppose that
  Assumptions~\ref{hypothesis:tvar_minimax} and~\ref{hyp:innov-moment_minimax}
  hold. Let $M=2\lfloor T^{2\beta/(2\beta+1)}\rfloor$ and
  $\smash{\widetilde{\btheta}_{t,T}(M)}$ be the estimator defined
  by~(\ref{equation:definition_hat_theta_Y-W})
  and~(\ref{eq:bias-reduced-estimator}) with $p$, the order of the TVAR process
  equal to $d$, the order of the prediction vector. Then, for any
  $q\in\N$ there exists a constant $C$ only depending on
  $q, h, p, \delta, \rho, \sigma_{+},\beta, R_0$ and the moments of the distribution
  of $\xi_0$ such that, for all $T\geq T_{0}$ and $t\in\Z$, we have
	\begin{eqnarray}
		\left(\E\left[\left\|\widetilde{\btheta}_{t,T}\left(M\right)-\btheta\left(\frac{t}{T}\right)\right\|^{q}\right]\right)^{1/q} \leq C\,T^{-\beta/(2\beta+1)}\;.
	\end{eqnarray}
\end{corollary}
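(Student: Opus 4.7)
The plan is to show that Corollary~\ref{corollary:tilde_theta_TVAR} follows as a direct specialization of Corollary~\ref{corollary:tilde_theta_star} once Theorem~\ref{theorem:A_TVAR} is invoked to check each of its hypotheses, with the extra observation that for TVAR processes with $d=p$ the quantity $\btheta_{t/T}$ that Corollary~\ref{corollary:tilde_theta_star} controls is literally equal to $\btheta(t/T)$.

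More precisely, starting from Assumption~\ref{hypothesis:tvar_minimax}, I would first apply Theorem~\ref{theorem:A_TVAR}\ref{item:tvar-1} to deduce that $(X_{t,T})_{t\in\Z,T\geq T_0}$ is $(\beta,R)$-weakly locally stationary with local spectral density given by~(\ref{eq:tvar-local-dens}), for some $R$ only depending on $p,\delta,\sigma_+,\beta,R_0$. Since $\rho\in(0,1]$, part~\ref{item:tvar-1bis} of the same theorem furnishes a lower bound $f_{-}>0$ for this local spectral density, only depending on $p,\delta,\rho,\sigma_+,\beta,R_0$. Thus Assumption~\ref{hypothesis:locally_stationary_with_f} holds with explicit control on the constants. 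Next, Assumption~\ref{hyp:innov-moment_minimax} combined with part~\ref{item:tvar-2} yields Assumption~\ref{hypothesis:gamma} with a constant $C$ depending only on $R_0,\beta,\delta,\sigma_+$ and with $\mu$ the law of $\xi_0$. Finally, Assumption~\ref{hypothesis:h} on the taper $h$ is taken as part of the definition of $\smash{\widetilde{\btheta}_{t,T}(M)}$.

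With all hypotheses of Corollary~\ref{corollary:tilde_theta_star} verified, I would apply it to obtain, for any $q>0$, some $C_0$ only depending on $h,q,d,f_{-},R,\mu$ and $\beta$ (hence, tracking back through the above reductions, only on $q,h,p,\delta,\rho,\sigma_+,\beta,R_0$ and the moments of $\xi_0$) and some $T_0$ depending only on $d,R,\beta,f_{-},C$ such that, for all $T\geq T_0$ and $t\in\Z$,
\begin{equation*}
\left(\E\left[\left\|\widetilde{\btheta}_{t,T}(M)-\btheta_{t/T}\right\|^{q}\right]\right)^{1/q} \leq C_0\,T^{-\beta/(2\beta+1)}\;.
\end{equation*}
The last step is to replace $\btheta_{t/T}$ by $\btheta(t/T)$: this is precisely the content of identity~(\ref{eq:pred-tvar-allcoeff-same}) in Theorem~\ref{theorem:A_TVAR}\ref{item:tvar-1}, which holds because we chose $d=p$. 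Substituting yields the announced bound with $C=C_0$.

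There is essentially no hard step here, since Theorem~\ref{theorem:A_TVAR} does all the model-specific work. The only point demanding some care is bookkeeping of the constants, namely verifying that the final constant $C$ depends on the parameters claimed in the statement and nothing else; in particular one must make sure that the lower bound $f_{-}$ produced by part~\ref{item:tvar-1bis} does not secretly depend on $T$ or on $t$, and that the moment constants supplied by~\ref{hyp:innov-moment_minimax} enter through $\mu$ only. Once this dependence tracking is done, the proof is just a chain of implications.
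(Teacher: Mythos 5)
Your proposal is correct and follows essentially the same route as the paper: invoke Theorem~\ref{theorem:A_TVAR} to verify the hypotheses of Corollary~\ref{corollary:tilde_theta_star} and then use the identity~(\ref{eq:pred-tvar-allcoeff-same}) (valid since $d=p$) to replace $\btheta_{t/T}$ by $\btheta(t/T)$. Your additional bookkeeping of the constants is consistent with the dependences stated in Theorem~\ref{theorem:A_TVAR} and Corollary~\ref{corollary:tilde_theta_star}, so nothing is missing.
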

\begin{proof}
By Theorem~\ref{theorem:A_TVAR}, we can apply
Corollary~\ref{corollary:tilde_theta_star}. Recall that in the case of the TVAR with order
$p$ equal to the prediction length, we have
$\btheta\left(t/T\right)=\btheta_{t/T}=\btheta_{t,T}^*$. 
\end{proof}
The estimator $\widetilde{\btheta}$ proposed in
Corollary~\ref{corollary:tilde_theta_TVAR} achieves the $\beta$-minimax-rate for TVAR
processes according to the lower 
\cite[Theorem~4]{Moulines_Priouret_Roueff:2005}. Hence, it is also
$\beta$-minimax-rate in the class of weakly locally stationary processes satisfying
Assumption~\ref{hypothesis:locally_stationary_with_f}. \cite[Section~A.1]{Giraud_Roueff_Sanchez-Perez:2015}
explains how to construct minimax-rate predictors from minimax-rate
estimators of $\btheta$. Applying their approach,
Corollary~\ref{corollary:tilde_theta_TVAR} also provides a crucial ingredient
in building $\beta$-minimax-rate predictors for any $\beta>0$.

\section{Numerical work} \label{section:numerical_work_minimax}
We test both methods on data simulated according to a TVAR process with
$p=3$. The parameter function $u\mapsto\btheta(u)$ within $s_p(\delta)$ for
some $\delta\in(0,1)$ is chosen randomly as follows. First we pick randomly
some smoothly time varying partial autocorrelation functions up to the order
$p$ that are bounded between $-1$ and $1$,
$\check\theta_{k,k}\left(u\right) \propto
\sum_{j=1}^{F-1}a_{j,k}j^{2}\cos\left(ju\right)$, where $a_{j,k}$ are random
numbers in $[-1,1]$. Here $\check\theta_{k,k}\left(u\right)$ is defined up to a
multiplicative constant; dividing, for example, by $F(F-1)(2F-1)/6$ guarantees
its values to remain within $(-1,1)$. Then, for any required $t,T$, we use
Algorithm~\ref{algorithm:levinson-durbin} with input
$\check\theta_{k,k}\left(t/T\right)$ and assign the output to
$\btheta(t/T)$. Based on the classical Levinson-Durbin recurrence (see for
example \cite[Proposition~5.2.1]{Brockwell_Davis:2006}), the $\check\btheta$ in
Algorithm~\ref{algorithm:levinson-durbin} is in $s_{(p)}(1)$ as defined
in~(\ref{eq:definition:s_d-simple}), and it follows that the output
$\btheta\in s_{(p)}(\delta)$. The randomly obtained three components of our
$\btheta(t)$ are displayed in Figure~\ref{figure:thetas}, for $t\in[0,1]$.

For each $T\in\{2^{2j}, j=5,\ldots,15\}$ we generate $100$ realizations of a
TVAR process from innovation sequences $(\xi_t)_{t\in\Z}$ of i.i.d. centred
Gaussian random variables with unit variance by sampling the previous $\btheta$
at a rate $T^{-1}$, and taking $\sigma\equiv1$. 

Then we compare
$\smash{\widehat{\btheta}}$ and $\smash{\widetilde{\btheta}}$ for estimating
$\btheta(1/2)$ using $h\equiv 1$ and different values of $M$. Recall that
$\smash{\btheta(1/2)=\btheta_{T/2,T}^{*}}$. Figure~\ref{figure:boxplots_all_M}
shows the boxplots corresponding to this evaluation for two different $T$s.
In Figure~\ref{figure:boxplots_all_M} we observe that for $T=2^{20}$ the
estimation error of $\smash{\widehat{\btheta}}$ is minimized in $M=2^{15}$
while that of $\smash{\widetilde{\btheta}}$ is minimized in $M=2^{17}$. The
estimator $\smash{\widetilde{\btheta}}$ beats $\smash{\widehat{\btheta}}$ for
the two biggest values of $M$. In the case $T=2^{30}$, the error of
$\smash{\widehat{\btheta}}$ reaches its minimum in $M=2^{23}$ and that of
$\smash{\widetilde{\btheta}}$ in $M=2^{26}$. The estimator
$\smash{\widetilde{\btheta}}$ beats $\smash{\widehat{\btheta}}$ for the four
biggest values of $M$. These experiences illustrate the theoretical result
established in Theorem~\ref{theorem:thetas_difference_development} and
Corollary~\ref{corollary:tilde_theta_TVAR} that after optimizing in $M$,
$\smash{\widetilde{\btheta}}$ outperforms $\smash{\widehat{\btheta}}$ for
$T$ large enough.

To corroborate these conclusions over a wider range of $T$'s, we refer to
Figure~\ref{figure:oracle}. The plot on the left-hand side shows the oracle
errors $\min_{M}\|\widehat{\btheta}_{T/2,T}(M)-\btheta(1/2)\|$ and
$\min_{M}\|\widetilde{\btheta}_{T/2,T}(M)-\btheta(1/2)\|$ for all
$T\in\{2^{2j}, 5\leq j\leq15\}$. The slope corresponding to
$\smash{\widetilde{\btheta}}$ (in blue) is steeper than the one corresponding
to $\smash{\widehat{\btheta}}$ (in red), meaning that, in average,
$\smash{\widetilde{\btheta}}$ outperforms $\smash{\widehat{\btheta}}$ by an
increasing order of magnitude as $T$ increases. The boxplots on the right-hand
side of Figure~\ref{figure:oracle} represent the ratios
$\smash{\min_{M}\|\widetilde{\btheta}_{T/2,T}(M)-\btheta(1/2)\|/\min_{M}\|\widehat{\btheta}_{T/2,T}(M)-\btheta(1/2)\|}$
computed for each $T$ and realization of the TVAR process. Observe that for
$2^{14}\leq T\leq 2^{18}$ the estimator $\smash{\widetilde{\btheta}}$ beats
$\smash{\widehat{\btheta}}$ in at least half of the cases. For $T\geq 2^{20}$,
it happens in at least $75\%$ of the cases. We conclude that the estimator with
reduced bias is of interest when the length of the data set becomes very large. 

\begin{algorithm} 
	\caption{Adapted Levinson-Durbin algorithm.\label{algorithm:levinson-durbin}}
	\KwParameters{the stability parameter $\delta>0$ and the time varying partial
		autocorrelation functions $\check\theta_{k,k}$, $k=1,\ldots,p$}
	\For{$k=2$ \KwTo $p$}{
		\For{$j=1$ \KwTo $k-1$}{
			$\check\theta_{j,k} = \check\theta_{j,k-1}-\check\theta_{k,k}\check\theta_{k-j,k-1} $;
		}
	}
        \For{$j=1$ \KwTo $p$}{
			$\theta_{j,p} = \delta^j\check\theta_{j,p}$;
		}
	\Return{$\btheta=[\theta_{1,p}\,\ldots\, \theta_{p,p}]'$.}
\end{algorithm}
\begin{figure}[htbp]
	\centering
	\includegraphics[width=10cm]{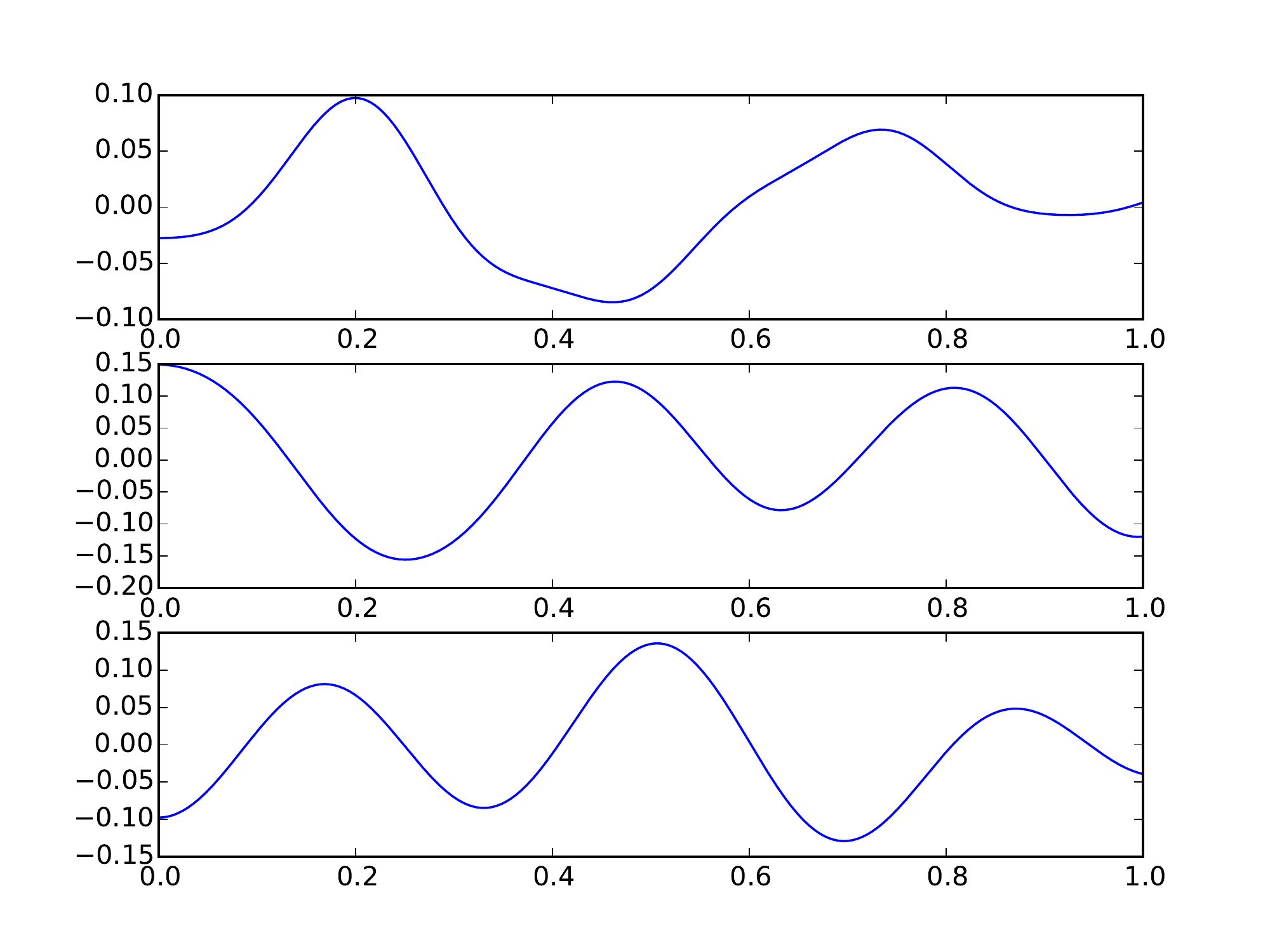}
	\caption{Plots of $\theta_{1}(t)$ (top),
		$\theta_{2}(t)$ (middle) and $\theta_{3}(t)$ (bottom) on the interval $t\in[0,1]$. } \label{figure:thetas}
\end{figure}

\begin{figure}[!h]
	\begin{minipage}[t]{0.4\linewidth}
		\centering
		\includegraphics[width=5.75cm]{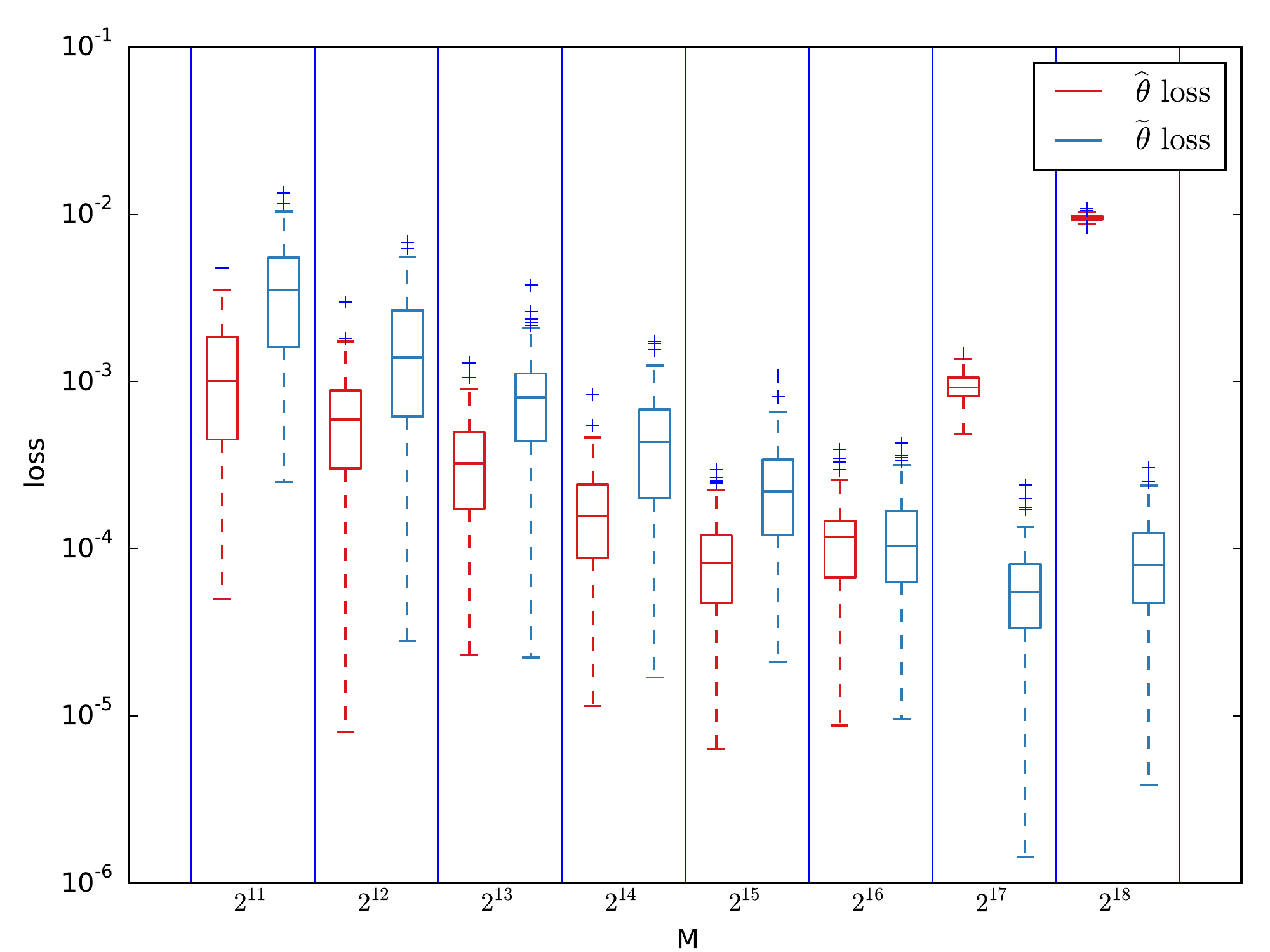}
	\end{minipage}
	\hspace{1.2cm}
	\begin{minipage}[t]{0.4\linewidth}
		\centering
		\includegraphics[width=5.75cm]{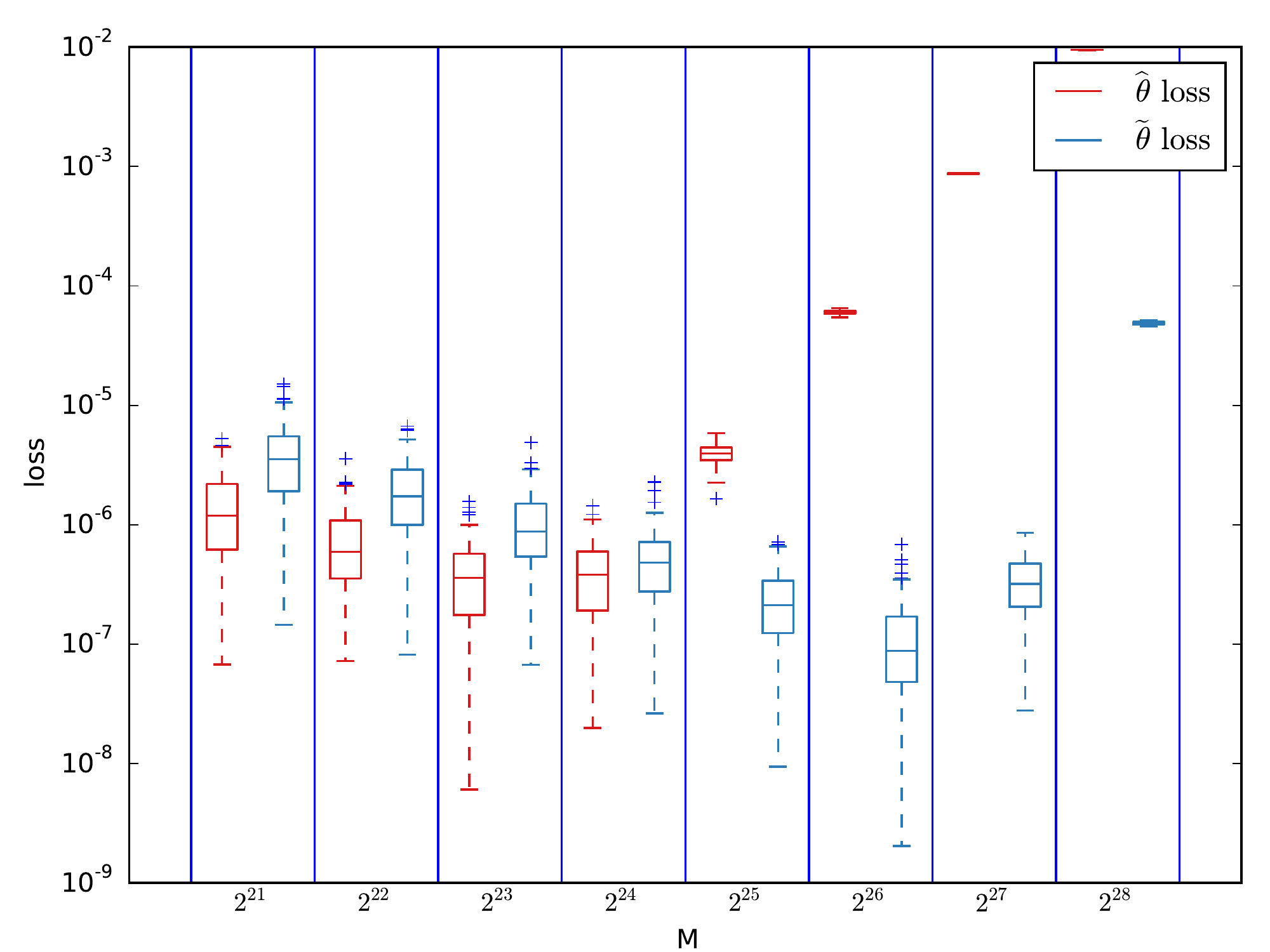}
	\end{minipage}
	\caption{Box plots of the quadratic losses for estimating $\btheta(1/2)$ using
		$\smash{\widehat{\btheta}_{T/2,T}(M)}$ (red boxes) and
		$\smash{\widetilde{\btheta}_{T/2,T}(M)}$ (blue boxes) for various
		bandwidths $M$, when  $T=2^{20}$ (left) and $T=2^{30}$ (right).}
	\label{figure:boxplots_all_M}
\end{figure}
\begin{figure}[!h]
	\begin{minipage}[t]{0.4\linewidth}
		\centering
		\includegraphics[width=5.75cm]{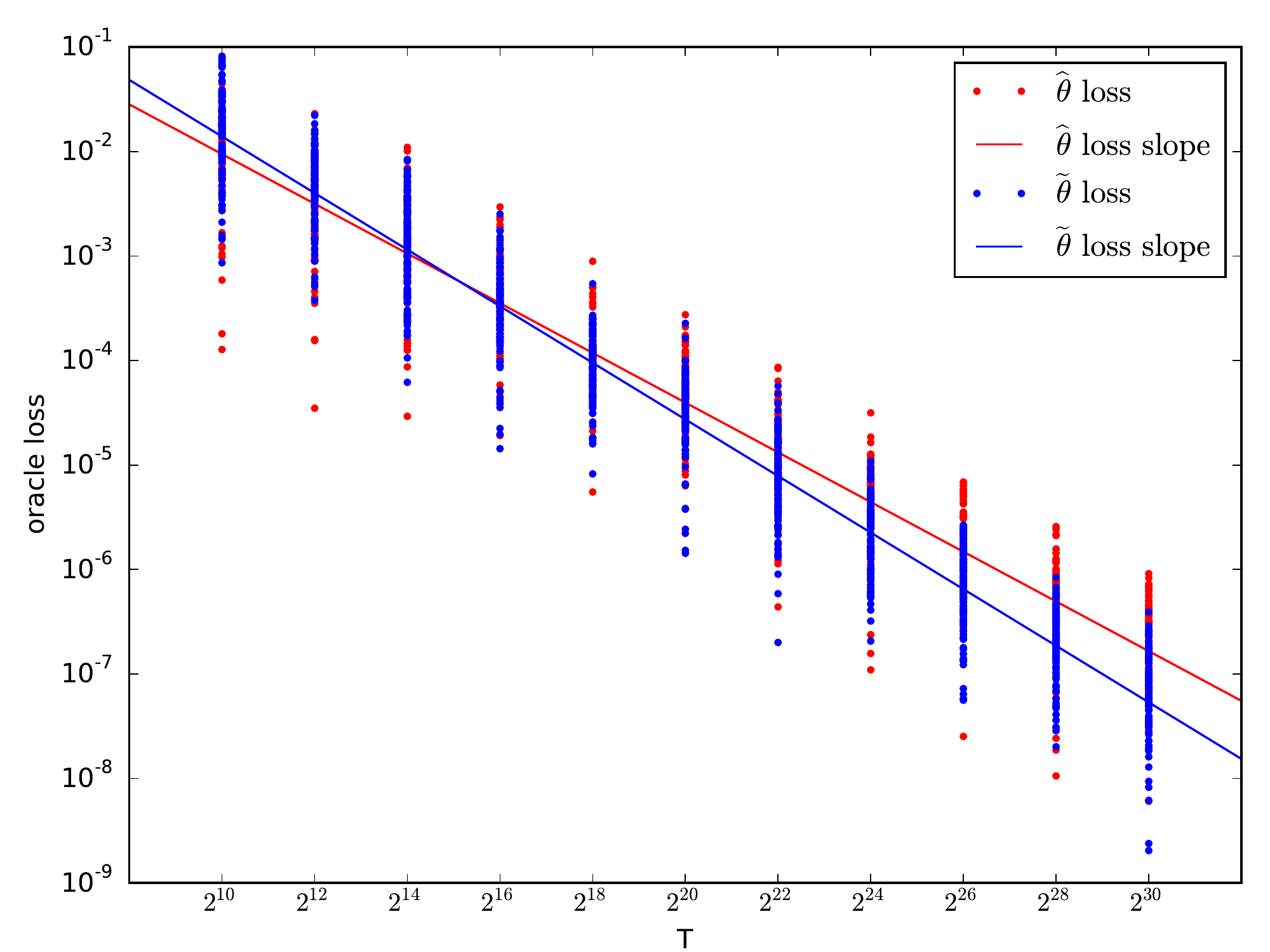}
	\end{minipage}
	\hspace{1.2cm}
	\begin{minipage}[t]{0.4\linewidth}
		\centering
		\includegraphics[width=5.75cm]{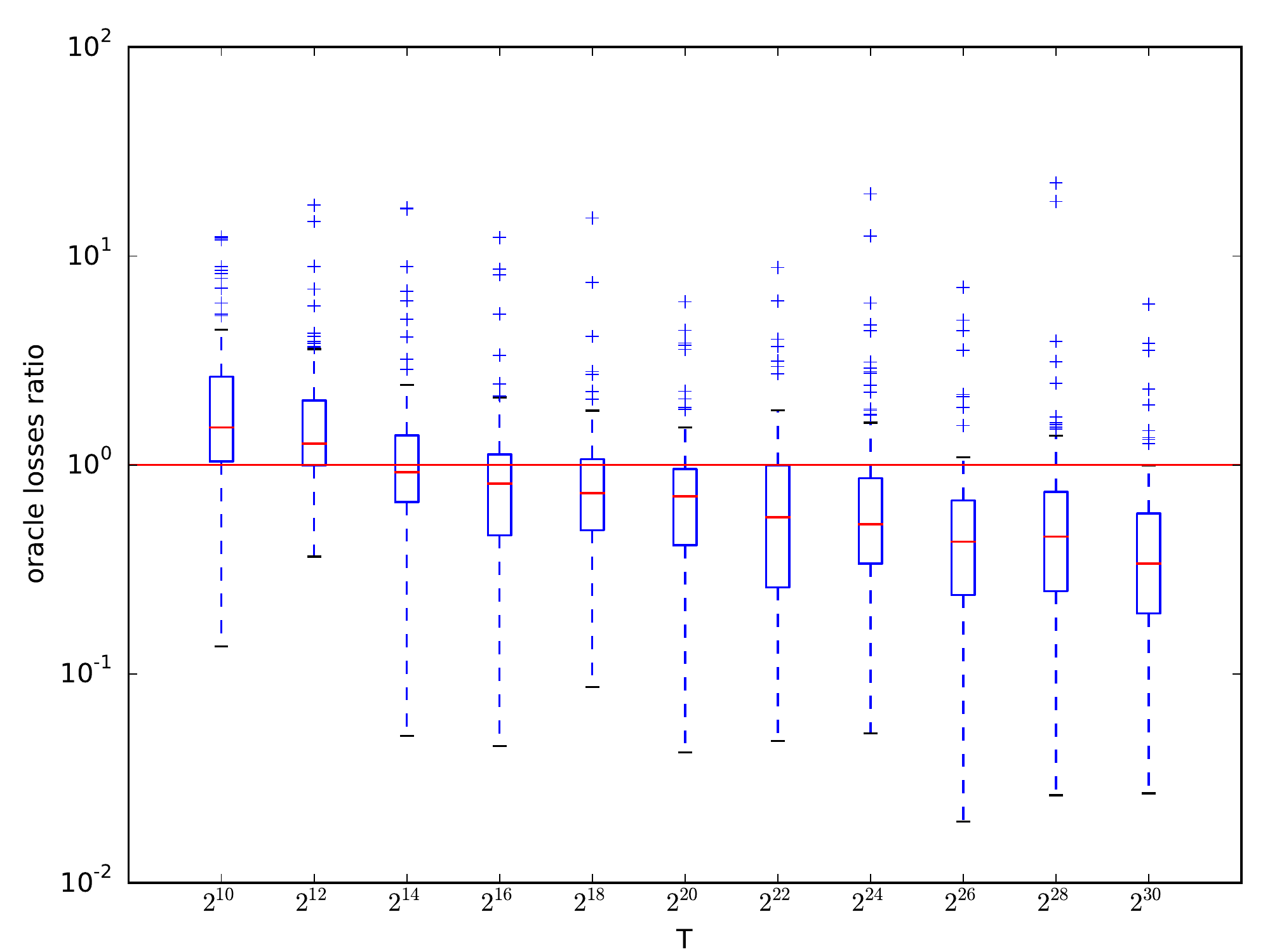}
	\end{minipage}
	\caption{Oracle losses (using the best choice for the bandwidth $M$) for estimating $\btheta(1/2)$ using
		$\smash{\widehat{\btheta}_{T/2,T}(M)}$ (red points) and
		$\smash{\widetilde{\btheta}_{T/2,T}(M)}$ (blue points) for various
		values of $T$. The left-hand side plot
		displays the losses over all the Monte Carlo simulations and the two resulting
		log-log regression lines. The right-hand side plot displays boxplots of the corresponding losses
		ratio. } \label{figure:oracle}	
\end{figure}

\section*{Acknowledgments}
We thank Tobias Kley for his thoughtful reading of the paper and appropriate
remarks (concerning specially Lemma~\ref{lemma:theta_star_theta_control}) and
also for bringing forward to our attention the related work \cite{kley16}.

\appendix

\section{Useful lemmas} \label{section:appendix_weakly_stationary_processes} We
gather here some useful lemmas that are (simple extensions of) standard results
for Yule-Walker estimation of the linear prediction coefficients. Most of them
are similar to those used in \cite{Dahlhaus_Giraitis:1998}.  Short proofs are
provided for the sake of completeness. Different bounds can be found in
\cite{kley16}, in order to better control the case $d\to\infty$. 

\begin{lemma} \label{lemma:thetas_difference_development} Let $d$ be a positive
  integer. Consider the $d\times d$ matrices $\Gamma$ and
  $\widehat{\Gamma}$ and vectors
  $\bm\gamma,\widehat{\bm\gamma},\bm\theta,\widehat{\bm\theta}\in\R^{d}$
  satisfying the relations
\begin{align}
\Gamma\bm\theta &= \gamma\;, \label{equation:Gamma_theta_gamma} \\
\widehat{\Gamma}\,\widehat{\bm\theta} &= \widehat{\gamma} \label{equation:hat_Gamma_theta_gamma} \;.
\end{align}
Then, for any $k\in\N$ we have, if $\Gamma$ is non-singular,
\begin{multline} \label{equation:thetas_difference_development_k}
\widehat{\btheta}-\btheta = \left(\Gamma^{-1}+\sum_{\ell=1}^{k}\left(\Gamma^{-1}\left(\Gamma-\widehat{\Gamma}\right)\right)^{\ell}\right)\left(\widehat{\bm \gamma}-\bm \gamma\right)+\sum_{\ell=1}^{k+1}\left(\Gamma^{-1}\left(\Gamma-\widehat{\Gamma}\right)\right)^{\ell}\bm\theta\\ + \left(\Gamma^{-1}\left(\Gamma-\widehat{\Gamma}\right)\right)^{k+1}\left(\widehat{\btheta}-\btheta\right) \;.
\end{multline}
\end{lemma}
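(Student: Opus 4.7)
The argument is purely linear-algebraic: derive a one-step recurrence for the error vector $\Delta := \widehat{\bm\theta} - \bm\theta$ and unfold it $k+1$ times by induction on $k$.

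Setting $E := \Gamma^{-1}(\Gamma - \widehat{\Gamma})$, I would first rewrite~\eqref{equation:hat_Gamma_theta_gamma} as $\Gamma\widehat{\bm\theta} = \widehat{\bm\gamma} + (\Gamma - \widehat{\Gamma})\widehat{\bm\theta}$ by means of the split $\widehat{\Gamma} = \Gamma - (\Gamma - \widehat{\Gamma})$, left-multiply by $\Gamma^{-1}$ (well-defined by the non-singularity hypothesis) and subtract $\bm\theta = \Gamma^{-1}\bm\gamma$ obtained from~\eqref{equation:Gamma_theta_gamma}. Writing $\widehat{\bm\theta} = \bm\theta + \Delta$ in the resulting identity yields the base recurrence
$$
\Delta = \Gamma^{-1}(\widehat{\bm\gamma} - \bm\gamma) + E\bm\theta + E\Delta,
$$
which is exactly~\eqref{equation:thetas_difference_development_k} at $k=0$.

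The inductive step is a one-line substitution. Assuming the identity holds at level $k$, I apply $E^{k+1}$ to the base recurrence to obtain
$$
E^{k+1}\Delta = E^{k+1}\Gamma^{-1}(\widehat{\bm\gamma} - \bm\gamma) + E^{k+2}\bm\theta + E^{k+2}\Delta,
$$
and substitute this expression for the residual term $E^{k+1}\Delta$ on the right-hand side of the level-$k$ identity. This bumps the residual index from $k+1$ to $k+2$, appends $E^{k+2}\bm\theta$ to the $\bm\theta$-sum, and adds a new summand of order $k+1$ to the coefficient acting on $(\widehat{\bm\gamma} - \bm\gamma)$, producing the statement at level $k+1$.

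No serious obstacle arises; the whole argument uses only the non-singularity of $\Gamma$, which is needed both to form $E$ and to identify $\bm\theta$ with $\Gamma^{-1}\bm\gamma$. The one care point is combinatorial: tracking the three index ranges so that the coefficient acting on $(\widehat{\bm\gamma} - \bm\gamma)$, the sum acting on $\bm\theta$, and the residual factor $E^{k+1}$ line up as in~\eqref{equation:thetas_difference_development_k}. No probabilistic, spectral, or analytic ingredient is involved.
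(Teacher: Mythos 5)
You take the same route as the paper: derive the one-step identity $\widehat{\btheta}-\btheta=\Gamma^{-1}\bigl[(\Gamma-\widehat{\Gamma})\widehat{\btheta}+\widehat{\bm\gamma}-\bm\gamma\bigr]$ from (\ref{equation:Gamma_theta_gamma})--(\ref{equation:hat_Gamma_theta_gamma}) and iterate it; your base recurrence is exactly this identity and is correct. The gap is in the inductive book-keeping. Writing $E:=\Gamma^{-1}(\Gamma-\widehat{\Gamma})$, the substitution of $E^{k+1}(\widehat{\btheta}-\btheta)=E^{k+1}\Gamma^{-1}(\widehat{\bm\gamma}-\bm\gamma)+E^{k+2}\btheta+E^{k+2}(\widehat{\btheta}-\btheta)$ into the level-$k$ identity appends the term $E^{k+1}\Gamma^{-1}(\widehat{\bm\gamma}-\bm\gamma)$, \emph{not} $E^{k+1}(\widehat{\bm\gamma}-\bm\gamma)$, to the first group. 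So what your recursion actually establishes is
\begin{equation*}
\widehat{\btheta}-\btheta=\Bigl(\sum_{\ell=0}^{k}E^{\ell}\Bigr)\Gamma^{-1}\bigl(\widehat{\bm\gamma}-\bm\gamma\bigr)+\sum_{\ell=1}^{k+1}E^{\ell}\btheta+E^{k+1}\bigl(\widehat{\btheta}-\btheta\bigr)\;,
\end{equation*}
where every power of $E$ acting on $\widehat{\bm\gamma}-\bm\gamma$ carries a factor $\Gamma^{-1}$ on its right. Your closing claim that this ``produces the statement at level $k+1$'' silently identifies $E^{k+1}\Gamma^{-1}$ with $E^{k+1}$, which fails in general; indeed the display (\ref{equation:thetas_difference_development_k}) as printed is not an exact identity: take $d=1$, $k=1$, $\Gamma=2$, $\widehat{\Gamma}=1$, $\bm\gamma=0$, $\widehat{\bm\gamma}=1$, so that $\btheta=0$, $\widehat{\btheta}=1$ and $E=1/2$; the left-hand side equals $1$ while the right-hand side of (\ref{equation:thetas_difference_development_k}) equals $(1/2+1/2)+0+1/4=5/4$.

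In other words, your method is sound and coincides with the paper's (whose proof is the same base identity followed by ``a recursion on $k$''), but it closes on the corrected identity displayed above rather than on the lemma as printed; the discrepancy is best viewed as a misprint in the lemma, the first factor of which should read $\bigl(\sum_{\ell=0}^{k}(\Gamma^{-1}(\Gamma-\widehat{\Gamma}))^{\ell}\bigr)\Gamma^{-1}$. The distinction is immaterial for the way the lemma is used, since in the proof of Theorem~\ref{theorem:thetas_difference_development} only norms of these terms enter and $\Gamma_{t/T}^{-1}=O_{f_{-}}(1)$ there, but as written your induction does not prove the stated formula: you should either state and prove the corrected identity or explicitly flag the extra $\Gamma^{-1}$ when you assert that the new summand joins the coefficient of $\widehat{\bm\gamma}-\bm\gamma$.
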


\begin{proof}
From Equations~(\ref{equation:Gamma_theta_gamma}) and~(\ref{equation:hat_Gamma_theta_gamma}) we get 
\begin{eqnarray*}
		\widehat{\btheta}-\btheta &=& \Gamma^{-1}\left[\left(\Gamma-\widehat{\Gamma}\right)\widehat{\btheta}+\widehat{\bm \gamma}-\bm \gamma\right] \;. 
\end{eqnarray*}
The result follows by applying a recursion on $k=
0,1,\dots$.
\end{proof}

\begin{lemma} \label{lemma:eigenvalues_covariance_matrix-spectral_density} Let
  $\gamma$ be the autocovariance function associated with a spectral density
  function $f$,
  $\gamma\left(s\right) = \int_0^{2\pi}\rme^{\rmi
    s\lambda}\,f(\lambda)\;\rmd\lambda\;,$ for all $s\in\Z$, and denote by
  $\Gamma_d$ the corresponding covariance matrix of size $d\times d$,
\begin{eqnarray} \label{equation:covariance_matrix}
\Gamma_{d} = 
\begin{bmatrix}
\gamma\left(0\right) & \ldots & \gamma\left(d-1\right) \\
\vdots & \ddots & \vdots \\
\gamma\left(d-1\right) & \ldots & \gamma\left(0\right)
\end{bmatrix}\;.
\end{eqnarray}
Then the following assertions hold for any $d\in\N^{*}$.
        \begin{enumerate}[label=(\roman*)]
        \item\label{item:1} If  $\int_0^{2\pi} f>0$ then $\Gamma_d$ is positive definite. 
        \item\label{item:2} If  $f$ is valued in $[f_{-},f_{+}]$ with $f_{-}\leq f_{+}$, then all the
          eigenvalues of $\Gamma_d$ belong to $[2\pi f_{-},2\pi f_{+}]$.
        \end{enumerate}
\end{lemma}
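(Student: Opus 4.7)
The plan is to exploit the standard trick that the quadratic form associated with $\Gamma_d$ can be written as an integral against $f$. Specifically, for any real $\bm x=(x_1,\dots,x_d)'\in\R^d$, expanding the double sum and using the definition of $\gamma$ yields
$$
\bm x'\Gamma_d\bm x \;=\; \sum_{i,j=1}^d x_i x_j\,\gamma(i-j) \;=\; \int_0^{2\pi}\Bigl|\sum_{k=1}^d x_k\,\rme^{\rmi k\lambda}\Bigr|^2 f(\lambda)\,\rmd\lambda \;,
$$
since the cross terms combine to give $|P(\lambda)|^2$ with $P(\lambda)=\sum_k x_k\rme^{\rmi k\lambda}$. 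This identity is the single tool we need for both items; the rest amounts to interpreting it appropriately. A preliminary observation is that $\Gamma_d$ is real symmetric (because exchanging $i$ and $j$ simply conjugates $\rme^{\rmi(i-j)\lambda}$, and integrating against the non-negative function $f$ produces a real symmetric kernel), so its eigenvalues are real and captured by the Rayleigh quotient.

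For \ref{item:1}, I would argue by contradiction or directly: take any non-zero $\bm x\in\R^d$ and show $\bm x'\Gamma_d\bm x>0$. The polynomial $P(\lambda)=\sum_{k=1}^d x_k\rme^{\rmi k\lambda}$ is a non-zero trigonometric polynomial (its Fourier coefficients are the $x_k$'s), and hence vanishes at only finitely many points in $[0,2\pi]$. Since the hypothesis $\int_0^{2\pi}f>0$ together with $f\geq 0$ forces $\{\lambda:f(\lambda)>0\}$ to have strictly positive Lebesgue measure, the integrand $|P(\lambda)|^2 f(\lambda)$ is non-negative and strictly positive on a set of positive measure. This gives $\bm x'\Gamma_d\bm x>0$ and hence positive definiteness.

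For \ref{item:2}, I would combine the same identity with Parseval's equality on $[0,2\pi]$:
$$
\frac{1}{2\pi}\int_0^{2\pi}|P(\lambda)|^2\,\rmd\lambda \;=\; \sum_{k=1}^d |x_k|^2 \;=\; \|\bm x\|^2\;.
$$
Using the pointwise bound $f_- \leq f(\lambda)\leq f_+$ and non-negativity of $|P|^2$, this immediately yields the sandwich
$$
2\pi f_- \|\bm x\|^2 \;\leq\; \bm x'\Gamma_d\bm x \;\leq\; 2\pi f_+ \|\bm x\|^2\;.
$$
Taking the infimum and supremum over unit vectors $\bm x$ and invoking the Rayleigh--Ritz characterization of extreme eigenvalues of the symmetric matrix $\Gamma_d$ gives the announced spectrum inclusion $\mathrm{sp}(\Gamma_d)\subset[2\pi f_-,2\pi f_+]$.

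There is no real obstacle here; the only point worth flagging is the measure-theoretic detail in \ref{item:1}, namely that a non-zero trigonometric polynomial has at most finitely many zeros on $[0,2\pi]$, which ensures $|P|^2 f$ is strictly positive on a set of positive measure whenever $\{f>0\}$ has positive measure. This replaces the sometimes-seen but stronger assumption that $f$ be strictly positive a.e.
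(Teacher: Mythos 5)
Your proof is correct and rests on exactly the same quadratic-form identity $\bm x'\Gamma_d\bm x=\int_0^{2\pi}\bigl|\sum_{k=1}^d x_k\rme^{\rmi k\lambda}\bigr|^2 f(\lambda)\,\rmd\lambda$ that the paper uses, which it states and then dismisses as "well known" with a reference to Brockwell and Davis. You simply fill in the details the paper omits (finiteness of the zero set of a nonzero trigonometric polynomial for \ref{item:1}, Parseval plus Rayleigh--Ritz for \ref{item:2}), all of which are sound.
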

\begin{proof}
  These well known facts  (see \emph{e.g.}
  \cite[Proposition~4.5.3]{Brockwell_Davis:2006}) follow from the identity $\displaystyle \bm
  a'\Gamma_{d}\bm a =
  \int_{-\pi}^{\pi}\left|\sum_{j=1}^{d}a_{j}\, \rme^{\rmi
      j\lambda}\right|^{2} f\left(\lambda\right)\rmd\lambda$ , for all $\bm
  a=[a_{1}\,\ldots\,a_{d}]'\in\R^{d}$.
\end{proof}
The next lemmas allow us to control the norms of $\widehat{\btheta}_{t,T}$ and
$\btheta_{t/T}$.
\begin{lemma}\label{lem:basic-s_pdelta}
  Let $p$ be a positive integer and $\delta>0$. The set
  $s_{(p)}\left(\delta\right)$ defined in~(\ref{eq:definition:s_d-simple}) is a
  closed subset of the ball
  $\{\btheta\in\R^p\;\text{s.t.}\;\|\btheta\|\leq(1+\delta)^{p}-1\}$.
\end{lemma}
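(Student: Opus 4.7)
The plan is to treat the two conclusions separately. For the norm bound I would factor the polynomial $P_{\btheta}(z)=1-\sum_{j=1}^{p}\theta_jz^j$ in $\C[z]$ as
$$
P_{\btheta}(z)=\prod_{j=1}^{p}(1-z/z_j)\;,
$$
where $z_1,\dots,z_p\in\C\cup\{\infty\}$ are the roots (counted with multiplicity, with the convention that a missing root is at infinity when $\theta_p=0$, so that the $1/z_j$ are simply the $p$ roots of the reciprocal polynomial $z^pP_{\btheta}(1/z)$). The definition of $s_{(p)}(\delta)$ forces $|z_j|\geq\delta^{-1}$, i.e.\ $|1/z_j|\leq\delta$, for every $j$. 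Expanding the product and matching coefficients of $z^k$ for $k=1,\dots,p$,
$$
\theta_k=(-1)^{k+1}e_k(1/z_1,\dots,1/z_p)\;,
$$
where $e_k$ denotes the $k$-th elementary symmetric polynomial in $p$ variables. Hence $|\theta_k|\leq e_k(\delta,\dots,\delta)=\binom{p}{k}\delta^k$, and summing with the elementary inequality $\|\cdot\|\leq\|\cdot\|_1$ gives
$$
\|\btheta\|\leq\sum_{k=1}^{p}|\theta_k|\leq\sum_{k=1}^{p}\binom{p}{k}\delta^k=(1+\delta)^{p}-1\;,
$$
which is the desired inclusion in the ball.

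For the closedness, I would invoke Hurwitz's theorem. Let $\btheta^{(n)}\in s_{(p)}(\delta)$ with $\btheta^{(n)}\to\btheta$ in $\R^p$. The polynomials $P_{\btheta^{(n)}}$ converge to $P_{\btheta}$ uniformly on compact subsets of $\C$, and each $P_{\btheta^{(n)}}$ is zero-free on the open disk $D=\{|z|<\delta^{-1}\}$. By Hurwitz, the limit $P_{\btheta}$ is either identically zero on $D$ or zero-free on $D$. Since $P_{\btheta}(0)=1$, it is not identically zero, so $P_{\btheta}$ has no zero in $D$, i.e.\ $\btheta\in s_{(p)}(\delta)$.

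There is no serious obstacle here: both steps are standard, the only point requiring a line of care is the handling of the degenerate case $\theta_p=0$ (fewer than $p$ finite roots) in the factorization step, which is dealt with by allowing roots at infinity, or equivalently by working directly with the reciprocal polynomial $z^pP_{\btheta}(1/z)=z^p-\sum_{j=1}^p\theta_jz^{p-j}$, whose $p$ roots in $\C$ all have modulus at most $\delta$.
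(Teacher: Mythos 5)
Your proof is correct. The closedness half is exactly the paper's argument: Hurwitz's theorem applied to the locally uniformly convergent polynomials $P_{\btheta^{(n)}}\to P_{\btheta}$ on the disk $\{|z|<\delta^{-1}\}$, with $P_{\btheta}(0)=1$ ruling out the identically-zero alternative. The norm bound, however, is obtained by a genuinely different route. The paper works on the unit circle: it uses the Parseval-type identity $1+\|\btheta\|^{2}=\frac{1}{2\pi}\int_{-\pi}^{\pi}\bigl|1-\sum_{j=1}^{p}\theta_{j}\rme^{\rmi j\lambda}\bigr|^{2}\rmd\lambda$ together with the pointwise bound $\bigl|\prod_{j}(1-z z_{j}^{-1})\bigr|\leq(1+\delta)^{p}$ for $|z|=1$. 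You instead read the bound off the coefficients directly, writing $\theta_{k}=(-1)^{k+1}e_{k}(1/z_{1},\dots,1/z_{p})$ and using monotonicity of the elementary symmetric polynomials plus $\|\cdot\|\leq\|\cdot\|_{1}$. Both are elementary, but your version has a concrete advantage: it delivers exactly the radius $(1+\delta)^{p}-1$ asserted in the lemma, whereas the Parseval computation only yields $\|\btheta\|\leq\bigl((1+\delta)^{2p}-1\bigr)^{1/2}$, which is \emph{larger} than $(1+\delta)^{p}-1$ (set $a=(1+\delta)^{p}\geq1$ and note $(a-1)^{2}\leq a^{2}-1$), so the paper's displayed argument in fact only proves containment in a slightly bigger ball. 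Your explicit handling of the degenerate case $\theta_{p}=0$ via roots at infinity, or equivalently the reciprocal polynomial $z^{p}P_{\btheta}(1/z)$, is also a point of care that the paper glosses over; it is needed for the factorization step and you resolve it correctly.
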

\begin{proof}
Hurwitz's theorem
(see~\cite[Theorem~2.5]{Conway:1973} or \cite[Section~3,
Chapter~VIII]{Gamelin:2001}) implies that $s_{(p)}\left(\delta\right)$ is a
closed subset of $\R^p$. It is also bounded (see
\cite[Lemma~1]{Moulines_Priouret_Roueff:2005}). Hereafter we provide
a slightly different bound using Euclidean norm instead of the supnorm. 

Take now $\btheta\in s_{(p)}\left(\delta\right)$.  Let $z_{1},\ldots,z_{p}$
denote the complex roots of the polynomial
$\btheta(z):=1-\sum_{j=1}^{p}\theta_j z^{j}$. They satisfy
$|z_{j}|\geq\delta^{-1}$ for any $j$. The following holds
\begin{eqnarray} \label{equation:modulus_theta_hat}
1+\left\|\btheta\right\|^{2} = \frac{1}{2\pi}
  \int_{-\pi}^{\pi}\left|1-\sum_{j=1}^{p}\theta_{j}\rme^{\rmi
  j\lambda}
\right|^{2}\rmd\lambda = \frac{1}{2\pi} \int_{-\pi}^{\pi}\left|\btheta\left(\rme^{\rmi\lambda}\right)\right|^{2}\rmd\lambda \;.
\end{eqnarray}
On the other hand we have
$\btheta\left(z\right) = \prod_{j=1}^{p}\left(1-zz_{j}^{-1}\right)$, so that
for $|z|=1$, since $|z_{j}|^{-1}\leq\delta$, we get
$|\btheta\left(z\right)|\leq (1+\delta)^{p}$. Putting this
into~(\ref{equation:modulus_theta_hat}) the proof is completed.
\end{proof}
The next lemma is similar in flavor to the statistical result of
\cite[Section~3]{Whittle:1963}. It is also a classical property of orthogonal
polynomials (see \cite[Section~2.4]{Grenander_Szego:1984}). We provide an
elementary proof.
\begin{lemma} \label{lemma:roots_characteristic_polynomial_YW} Let $\gamma$ be
  an autocovariance function. Let $d\geq1$ such that the covariance matrix
  $\Gamma_{d}$ defined by Equation~(\ref{equation:covariance_matrix}) is
  positive-definite. Let $\btheta$ denote the solution of the $d$-order
  Yule-Walker equation,
  ${\btheta}=[{\theta}_{1}\,\ldots\,{\theta}_{d}]'=\Gamma_{d}^{-1}\bm\gamma_{d}$
  with $\bm\gamma_{d} = [\gamma(1)\,\ldots\,\gamma(d)]'$. Then we have
  $\btheta\in s_{(d)}(1)$ and $\|\btheta\|\leq2^d-1$.
\end{lemma}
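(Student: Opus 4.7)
The bound $\|\btheta\|\leq 2^{d}-1$ is an immediate consequence of Lemma~\ref{lem:basic-s_pdelta} applied with $p=d$ and $\delta=1$, once the first assertion $\btheta\in s_{(d)}(1)$ has been established. So the core of the proof is to show that $\btheta(z):=1-\sum_{j=1}^{d}\theta_{j}z^{j}$ has no zero in the open unit disk $\{|z|<1\}$. My plan is to combine the Herglotz representation of $\gamma$ with a classical root-reflection argument.

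First I would set up a variational characterization of $\btheta$. By the Herglotz theorem there exists a non-negative finite Borel measure $F$ on $[-\pi,\pi]$ with $\gamma(s)=\int_{-\pi}^{\pi}\rme^{\rmi s\lambda}F(\rmd\lambda)$ for all $s\in\Z$. A direct computation using the definition of $\Gamma_{d}$ and $\bm\gamma_{d}$ yields, for any $\bm\alpha=[\alpha_{1}\,\ldots\,\alpha_{d}]'\in\R^{d}$ and the associated polynomial $A(z):=1-\sum_{j=1}^{d}\alpha_{j}z^{j}$,
\[
\gamma(0)-2\bm\alpha'\bm\gamma_{d}+\bm\alpha'\Gamma_{d}\bm\alpha
= \int_{-\pi}^{\pi}\bigl|A(\rme^{\rmi\lambda})\bigr|^{2}F(\rmd\lambda)\;.
\]
Since $\Gamma_{d}$ is positive definite, this is a strictly convex quadratic in $\bm\alpha$, whose unique minimizer over $\R^{d}$ is the Yule-Walker solution $\bm\alpha=\btheta$.

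Next comes the reflection step. Assume for contradiction that $\btheta$ has a root $z_{0}$ with $|z_{0}|<1$; grouping $z_{0}$ with its conjugate $\bar z_{0}$ to preserve real coefficients (trivial if $z_{0}\in\R$), factor $\btheta(z)=q(z)r(z)$ with $r(0)=1$ and $q(z)=1-z/z_{0}$ or $q(z)=(1-z/z_{0})(1-z/\bar z_{0})$. Define the reciprocal factor $q^{*}(z):=1-\bar z_{0}z$ in the real case, or $q^{*}(z):=(1-\bar z_{0}z)(1-z_{0}z)$ in the complex case, and set $\btheta^{*}(z):=q^{*}(z)r(z)$. Then $\btheta^{*}$ is real, of degree $d$, with $\btheta^{*}(0)=1$, hence represents some $\bm\theta^{*}\in\R^{d}$. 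The elementary identity $|1-z/w|=|1-\bar w\,z|/|w|$ for $|z|=1$ and $w\neq0$ gives
\[
\bigl|\btheta^{*}(\rme^{\rmi\lambda})\bigr|^{2} = |z_{0}|^{2m}\bigl|\btheta(\rme^{\rmi\lambda})\bigr|^{2},\qquad\lambda\in[-\pi,\pi],
\]
with $m\in\{1,2\}$. Integrating against $F$, the objective value at $\bm\theta^{*}$ equals $|z_{0}|^{2m}$ times that at $\btheta$. Since $|z_{0}|<1$, this contradicts the minimality of $\btheta$ whenever $\int|\btheta(\rme^{\rmi\lambda})|^{2}F(\rmd\lambda)>0$.

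The only subtle point, which I expect to be the main obstacle, is the degenerate case $\int|\btheta(\rme^{\rmi\lambda})|^{2}F(\rmd\lambda)=0$ (corresponding to an exact one-step linear predictor). In that case $\bm\theta^{*}$ is also a minimizer of the same strictly convex quadratic, hence by uniqueness $\bm\theta^{*}=\btheta$; matching the reflected factor against the original forces $1/z_{0}=\bar z_{0}$ and thus $|z_{0}|=1$, again contradicting $|z_{0}|<1$. This shows $\btheta\in s_{(d)}(1)$ and, via Lemma~\ref{lem:basic-s_pdelta}, completes the proof.
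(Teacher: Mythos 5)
Your proof is correct, but it follows a genuinely different route from the paper. You characterize $\btheta$ as the unique minimizer of the prediction-error quadratic $\bm\alpha\mapsto\int_{-\pi}^{\pi}|1-\sum_{j}\alpha_j\rme^{\rmi j\lambda}|^2F(\rmd\lambda)$ (via Herglotz) and then run the classical root-reflection argument: a root $z_0$ with $|z_0|<1$ can be replaced by its reciprocal, multiplying the objective by $|z_0|^{2m}<1$, which contradicts minimality; you correctly isolate and dispose of the degenerate case of zero prediction error through uniqueness of the minimizer, and the norm bound then follows from Lemma~\ref{lem:basic-s_pdelta} exactly as in the paper. The paper instead argues purely in linear algebra with the companion matrix $A$ of $\btheta$: using the Toeplitz structure and the Yule--Walker relation, $\Gamma_d-A\Gamma_dA'$ is shown to be positive semi-definite (it is zero except for the prediction-error variance $\gamma(0)-\btheta'\bm\gamma_d\geq0$ in the top-left entry), and evaluating the quadratic form at an eigenvector of $A'$ gives $\bar{\bm v}'\Gamma_d\bm v\,(1-|\lambda|^2)\geq0$, hence all eigenvalues lie in the closed unit disk and the roots of $\btheta(z)$, being their inverses, lie outside the open disk. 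The paper's Lyapunov-type argument is shorter, needs no spectral measure, and absorbs your degenerate case automatically (it only uses $\gamma(0)-\btheta'\bm\gamma_d\geq0$); your variational argument requires Herglotz and the separate degenerate-case discussion, but it makes the "minimum-phase" mechanism more transparent — an interior root strictly improves the predictor — and would, for instance, yield strict exclusion of roots on or near the circle under a lower bound on the spectral density.
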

\begin{proof}
  We only need to prove $\btheta\in s_{(d)}(1)$ since  $\|\btheta\|\leq2^d-1$
  is then implied by Lemma~\ref{lem:basic-s_pdelta} with $p=d$ and $\delta=1$.

  For $j=1,\ldots,d$, let $\bm e_{j}=[0\;\dots\;1\;\ldots\;0]'$ be the
  $\R^{d}$- vector having a $1$ in the $j$-th position and zero everywhere
  else. Consider also the companion matrix $A= \begin{bmatrix} {\btheta} & \bm
    e_{1} & \dots & \bm e_{d-1}
	\end{bmatrix}'$\;.
	Since the roots of ${\btheta}(z)$ are the inverses of the eigenvalues
        of $A$, or $A'$, we only need to prove that the eigenvalues of $A'$ are
        inside the closed unit disk. Observe that
	\begin{eqnarray*}
		\Gamma_{d}-A\Gamma_{d}A' = \Gamma_{d} - \begin{bmatrix}
			{\btheta}'\Gamma_{d}{\btheta} &{\btheta}'\Gamma_{d}\bm e_{1}&\dots&\dots &{\btheta}'\Gamma_{d}\bm e_{d-1} \\
			\bm e_{1}'\Gamma_{d}{\btheta} &\bm e_{1}'\Gamma_{d}\bm e_{1}&\dots&\dots&\bm e_{1}'\Gamma_{d}\bm e_{d-1} \\
			\bm e_{2}'\Gamma_{d}{\btheta} &\bm e_{2}'\Gamma_{d}\bm e_{1}&\dots&\dots&\bm e_{2}'\Gamma_{d}\bm e_{d-1} \\
			\vdots&\vdots&\ddots&\ddots&\vdots\\
			\bm e_{d-1}'\Gamma_{d}{\btheta}&\bm e_{d-1}'\Gamma_{d}\bm e_{1}&\dots&\dots&\bm e_{d-1}'\Gamma_{d}\bm e_{d-1}
		\end{bmatrix}\;.
	\end{eqnarray*}
	Because $\Gamma_{d}$ is a Toeplitz matrix, its $(i,j)$-th entries, and
        those of $A\Gamma_{d}A'$ are equal for $i,j\geq 2$. The definition of
        ${\btheta}$ implies also the equality of the $(i,j)$-th entries of both
        matrices when $i=1, j\geq 2$ and $i\geq 2, j=1$. 
        Hence $\Gamma_{d}-A\Gamma_{d}A'$ is a $d\times d$ symmetric matrix with
        zero entries except at the top-left where it takes value
        $\gamma(0)-{\btheta}'\bm\gamma_{d}$. This value is non-negative since
        it is the variance of the prediction error or order $d$. 
        Hence we conclude that for $\bm v\in\C^d$, $\bm
        v'(\Gamma_{d}-A\Gamma_{d}A')\bm v\geq0$. 
        Consider now $\lambda$, an eigenvalue of $A'$ and the corresponding
        eigenvector $\bm v\in\C^d\setminus\{0\}$. We get
$$
0\leq  \bar{\bm v}'(\Gamma_{d}-A\Gamma_{d}A')\bm v= \bar{\bm v}'(\Gamma_{d}-A\Gamma_{d}A')\bm v
= \bar{\bm v}'\Gamma_{d}\bm v (1-|\lambda|^2)\;.
$$ 
We conclude that $|\lambda|\leq1$ since $\bar{\bm v}'\Gamma_{d}\bm v>0$.
\end{proof}

\begin{lemma} \label{lemma:hat_theta_Y-W_bound} Let $d\geq1$,
  $(X_{t,T})_{t\in\Z,T\geq T_0}$ be an array process and $h:[0,1]\to\R$. For
  any $M\in\N^*$, define the local tapered empirical covariance function
  $\widehat{\gamma}_{T,M}$ as in
  Definition~\ref{definition:empirical_local_covariance_function} and let, for
  any $t\in\Z$ and $T\geq T_0$,
  $\widehat{\Gamma}_{t,T,M}=(\widehat{\gamma}_{T,M}(t/T,i-j);\,i,j=1,\ldots,d)$
  be the corresponding $d\times d$ empirical covariance matrix.  Then, either
  $\widehat{\Gamma}_{t,T,M}$ is non-singular, or
  $\widehat{\gamma}_{T,M}(t/T,\ell)=0$ for all $\ell\in\Z$. Moreover, in the
  case where it is non-singular, the Yule-Walker estimate
  $\widehat{\btheta}_{t,T}(M)$ defined
  by~(\ref{equation:definition_hat_theta_Y-W}) satisfies
  $\|\widehat{\btheta}_{t,T}(M)\|\leq 2^{d}-1$.
\end{lemma}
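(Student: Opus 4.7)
The plan is to interpret $(\widehat{\gamma}_{T,M}(t/T,\ell))_{\ell\in\Z}$ as a bona fide autocovariance sequence, so that the results already collected in Section~\ref{section:appendix_weakly_stationary_processes} (in particular Lemmas~\ref{lemma:eigenvalues_covariance_matrix-spectral_density} and \ref{lemma:roots_characteristic_polynomial_YW}) apply directly. To that end, I would introduce the tapered finite sequence
$$
Z_t = h(t/M)\,X_{\lfloor uT\rfloor+t-M/2,T}\,\1_{1\leq t\leq M}\;,
$$
with $u=t/T$, so that an elementary re-indexing of the double sum in Definition~\ref{definition:empirical_local_covariance_function} yields $\widehat{\gamma}_{T,M}(u,\ell)=H_M^{-1}\sum_{n\in\Z}Z_{n+\ell}Z_n$. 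Writing $\hat Z(\lambda)=\sum_n Z_n\rme^{-\rmi n\lambda}$ (a finite trigonometric sum, hence a well-defined function on $[-\pi,\pi]$), Parseval's formula gives
$$
\widehat{\gamma}_{T,M}(u,\ell)=\int_{-\pi}^{\pi}\rme^{\rmi\ell\lambda}\,g(\lambda)\,\rmd\lambda\;,\qquad g(\lambda):=\frac{|\hat Z(\lambda)|^2}{2\pi H_M}\geq0\;.
$$
So $(\widehat{\gamma}_{T,M}(u,\ell))_{\ell\in\Z}$ is the autocovariance function associated with the nonnegative spectral density $g$ (the case $H_M=0$ is vacuous by the convention made after Definition~\ref{definition:empirical_local_covariance_function}).

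Next I would use Lemma~\ref{lemma:eigenvalues_covariance_matrix-spectral_density}\ref{item:1}: $\widehat{\Gamma}_{t,T,M}$ is positive definite as soon as $\int_{-\pi}^\pi g>0$. Since
$$
\int_{-\pi}^\pi g(\lambda)\,\rmd\lambda=\widehat{\gamma}_{T,M}(u,0)=\frac{1}{H_M}\sum_n Z_n^2\;,
$$
vanishing of this integral forces $Z_n=0$ for every $n\in\Z$, hence $\widehat{\gamma}_{T,M}(u,\ell)=0$ for all $\ell$. This establishes the dichotomy: either $\widehat{\Gamma}_{t,T,M}$ is non-singular, or the whole empirical covariance sequence is zero.

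For the norm bound, observe that when $\widehat{\Gamma}_{t,T,M}$ is non-singular, $\widehat{\btheta}_{t,T}(M)$ is by definition the solution of the order-$d$ Yule-Walker system~(\ref{equation:definition_hat_theta_Y-W}) associated with an autocovariance function (namely $\ell\mapsto\widehat{\gamma}_{T,M}(t/T,\ell)$) whose covariance matrix is positive definite. Lemma~\ref{lemma:roots_characteristic_polynomial_YW} then applies verbatim and yields $\widehat{\btheta}_{t,T}(M)\in s_{(d)}(1)$, hence $\|\widehat{\btheta}_{t,T}(M)\|\leq 2^d-1$ via Lemma~\ref{lem:basic-s_pdelta}. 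I do not foresee any serious obstacle; the only delicate point is the bookkeeping to make sure the re-indexing of the double sum is correct and that $\hat Z$ is a genuine Fourier series of a compactly supported sequence so that Parseval applies without any further integrability concern.
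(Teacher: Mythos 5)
Your proposal is correct and follows essentially the same route as the paper: you identify $(\widehat{\gamma}_{T,M}(t/T,\ell))_{\ell\in\Z}$ as the autocovariance sequence of the nonnegative (tapered, localized) periodogram-type spectral density and then invoke Lemmas~\ref{lemma:eigenvalues_covariance_matrix-spectral_density}\ref{item:1} and~\ref{lemma:roots_characteristic_polynomial_YW} (with Lemma~\ref{lem:basic-s_pdelta}) exactly as the paper does. Your write-up is in fact slightly more careful, spelling out the Parseval step, the $(2\pi)^{-1}$ normalization, and the dichotomy via $\int g=\widehat{\gamma}_{T,M}(t/T,0)$.
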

\begin{proof}
  First note that for all $u\in\R$, the sequence
  $\left(\widehat{\gamma}_{T,M}\left(u,\ell\right)\right)_{\ell\in\Z}$ is the
  covariance function associated with the spectral density
$$
\widehat{f}_{M}(u,\lambda)= \frac{1}{H_{M}}\left|\sum_{t=1}^{M}h\left(\frac{t}{M}\right)X_{\lfloor uT\rfloor+t-M/2,T}\rme^{-\rmi\lambda\,t}\right|^2
$$
We conclude by applying
Lemmas~\ref{lemma:eigenvalues_covariance_matrix-spectral_density}~\ref{item:1}
and~\ref{lemma:roots_characteristic_polynomial_YW}.
\end{proof}	

\section{Bounds of the estimation risk in the general setting} \label{section:proof_bounds_estimation_risk}
\subsection{Proof of Lemma~\ref{lemma:theta_star_theta_control}}
\label{sec:proof-lemma-refl}
Let us first bound the approximation error $\Gamma_{t/T}-\Gamma_{t,T}^{*}$.
\begin{multline}
\left|\gamma\left(\frac{t}{T},i-j\right)- \gamma^{*}\left(t-i,T,j-i\right)\right| \leq \left|\gamma\left(\frac{t-i}{T},j-i\right)-\gamma\left(\frac{t}{T},i-j\right)\right| \\ 
+ \left|\gamma\left(\frac{t-i}{T},j-i\right)- \gamma^{*}\left(t-i,T,j-i\right)\right|\;. \label{equation:gamma_t_i_gamma_t}
\end{multline}
The second line term of Inequality~(\ref{equation:gamma_t_i_gamma_t}) is upper
bounded by $RT^{-\min(1,\beta)}$ because of Inequality~(\ref{equation:gamma_gamma_star}). Using
the local covariance expression~(\ref{equation:local_covariance_function}),
Cauchy-Schwartz inequality and
$f(,\lambda)\in\Lambda_{1}(\beta,R)$, the following holds for $T\geq d\geq|i|$,
\begin{eqnarray}
\left|\gamma\left(\frac{t-i}{T},j-i\right)-\gamma\left(\frac{t}{T},i-j\right)\right|
&=& \left|\int_{-\pi}^{\pi}\rme^{\rmi\left(j-i\right)\lambda}
\left(f\left(\frac{t-i}{T},\lambda\right)-f\left(\frac{t}{T},\lambda\right)\right)\rmd\lambda\right| \nonumber \\
&\leq& 2\pi dR\,T^{-\min(1,\beta)} \label{equation:gamma_t_i_gamma_t_local_covariance_function}\;.
\end{eqnarray}
Inequality~(\ref{equation:gamma_gamma_star}) implies that
$\smash{\|\bm \gamma_{t,T}^{*}-\bm \gamma_{t/T}\|\leq
  d^{1/2}R\,T^{-\min(1,\beta)}}$ and
inequalities~(\ref{equation:gamma_t_i_gamma_t}),
~(\ref{equation:gamma_t_i_gamma_t_local_covariance_function}) and
again~(\ref{equation:gamma_gamma_star}) imply that for $T\geq d$,
$$
\|\Gamma_{t/T}-\Gamma_{t,T}^{*}\|\leq d(2\pi d R+R)T^{-\min(1,\beta)}\;. 
$$
The smallest
eigenvalue of the matrix $\Gamma_{t/T}$ is greater or equal to $2\pi f_{-}$
(see Lemma~\ref{lemma:eigenvalues_covariance_matrix-spectral_density}~\ref{item:2}). Observe that
\begin{multline*}
\inf_{t}\inf_{\|\bm{a}\|=1}\bm{a}'\Gamma_{t,T}^{*}\bm{a} = \inf_{t}\inf_{\|\bm{a}\|=1}\left\{\bm{a}'\left(\Gamma_{t,T}^{*}-\Gamma_{t/T}\right)\bm{a}+\bm{a}'\Gamma_{t/T}\bm{a}\right\} \\
\geq \inf_{t}\inf_{\|\bm{a}\|=1}\bm{a}'\left(\Gamma_{t,T}^{*}-\Gamma_{t/T}\right)\bm{a} + \inf_{t}\inf_{\|\bm{a}\|=1}\bm{a}'\Gamma_{t/T}\bm{a}
\geq 2\pi f_{-}-d^{3/2}CT^{-\min(1,\beta)}\,.
\end{multline*}
Then, for $T\geq T_{0}=(dR(2\pi d+1)/(2\pi f_{-}))^{1/\min(1,\beta)}$, we have that
$\Gamma_{t,T}^{*}$ is invertible and
$\smash{\|(\Gamma_{t,T}^{*})^{-1}\|\leq(\pi
  f_{-})^{-1}}$. Now, from equations~(\ref{equation:btheta_star}) and~(\ref{equation:definition_theta_Y-W}) we obtain that 
\begin{eqnarray*}
  \btheta_{t,T}^{*}-\btheta_{t/T} &=&
  \left(\Gamma_{t,T}^{*}\right)^{-1}\left[\left(\Gamma_{t/T}-\Gamma_{t,T}^{*}\right)\btheta_{t/T}+
\bm \gamma_{t,T}^{*}-\bm \gamma_{t/T}\right] \;. 
\end{eqnarray*}
Applying matrix inequalities (specifically with the spectral norm) we get 
\begin{equation*}
\|\btheta_{t,T}^{*}-\btheta_{t/T}\| \leq \left\|\left(\Gamma_{t,T}^{*}\right)^{-1}\right\|\left(\left\|\Gamma_{t/T}-\Gamma_{t,T}^{*}\right\|\left\|\btheta_{t/T}\right\|+\left\|\bm \gamma_{t,T}^{*}-\bm \gamma_{t/T}\right\|\right) \;. 
\end{equation*}
Lemma~\ref{lemma:roots_characteristic_polynomial_YW} ensures that $\|\btheta_{t/T}\|\leq 2^{d}$ and the
result follows with $C_{1}=(\pi f_{-})^{-1}(d(2\pi dR+R)2^{d}+R)$.

\subsection{Bias Approximation}
The following elementary lemma will be useful.
\begin{lemma}\label{lem:h-approx-riemann}
  Let $h:[0,1]\to\R$ satisfying~\ref{hypothesis:h}. Then, for all
  $\ell=0,1,2,\dots$ and $M\geq j\geq0$, we have
$$
\frac1M\sum_{s=j+1}^{M}h\left(\frac{s}{M}\right)h\left(\frac{s-j}{M}\right)\left(\frac
  sM\right)^{\ell}=\int_0^1h^2(u)u^\ell\,\rmd u+O_{j,\ell,\|h\|_{\infty},\|h'\|_{\infty}}\left(M^{-1}\right)\;.
$$
In particular, in the case $j=\ell=0$, $H_M=M+O_{\|h\|_{\infty},\|h'\|_{\infty}}\left(1\right)$.
\end{lemma}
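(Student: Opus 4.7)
The strategy is to sandwich the finite sum between integrals: first I would compare it with $\int_{j/M}^{1}h(u)h(u-j/M)u^{\ell}\,\rmd u$, then compare that integral with $\int_{0}^{1}h^{2}(u)u^{\ell}\,\rmd u$. Each step produces an $O(M^{-1})$ error.

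For the first comparison, observe that $u\mapsto g(u):=h(u)h(u-j/M)u^{\ell}$ is piecewise $\mathcal{C}^{1}$ on $[j/M,1]$: the discontinuities of $h'$ together with the shifted discontinuities of $u\mapsto h'(u-j/M)$ produce at most $2N$ break-points (where $N$ is the number of pieces in~\ref{hypothesis:h}). The left-hand side is a right Riemann sum for $\int_{j/M}^{1}g$. On each $\mathcal{C}^{1}$ subinterval, the standard estimate for Riemann sums of $\mathcal{C}^{1}$ functions yields error bounded by (length)$\,\|g'\|_{\infty}/M$; since $\|g'\|_{\infty}\leq 2\|h\|_{\infty}\|h'\|_{\infty}+\ell\|h\|_{\infty}^{2}$ on its domain of differentiability, the total contribution of the smooth pieces is $O_{\ell,\|h\|_{\infty},\|h'\|_{\infty}}(M^{-1})$. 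Each of the at most $2N$ intervals of length $1/M$ straddling a breakpoint contributes an additional $O(\|h\|_{\infty}^{2}/M)$, which is absorbed into the same order of error.

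For the second comparison, extending the integration from $[j/M,1]$ to $[0,1]$ changes the integral by at most $j\|h\|_{\infty}^{2}/M$. Replacing $h(u-j/M)$ by $h(u)$ is then handled by splitting $[0,1]$: on the union of $j/M$-neighbourhoods of the (finitely many) breakpoints of $h$, whose total measure is $O(jN/M)$, one uses the crude bound $|h(u-j/M)-h(u)|\leq 2\|h\|_{\infty}$; on the complement, the mean value theorem gives $|h(u-j/M)-h(u)|\leq (j/M)\|h'\|_{\infty}$. Multiplying by $|h(u)u^{\ell}|\leq\|h\|_{\infty}$ and integrating produces the $O_{j,\ell,\|h\|_{\infty},\|h'\|_{\infty}}(M^{-1})$ contribution.

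Combining the two steps yields the claimed bound, and the special case $j=\ell=0$ follows by multiplying by $M$ and using the normalization $\int_{0}^{1}h^{2}=1$ from~\ref{hypothesis:h}. The only non-routine point is bookkeeping the discontinuities of $h'$ and their shifts by $j/M$ so that all error constants depend only on $j,\ell,\|h\|_{\infty}$ and $\|h'\|_{\infty}$ (with the number of pieces $N$ treated as inherent to the piecewise decomposition underlying \ref{hypothesis:h}); this is the step where a careless estimate would introduce a spurious dependence on $M$ or lose the $M^{-1}$ rate.
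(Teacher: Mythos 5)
Your proof is correct and follows essentially the same route as the paper's (which simply invokes Riemann approximation on the blocks determined by the breakpoints of $h$ and discards the boundedly many indices near them); your extra step through the intermediate integral $\int_{j/M}^{1}h(u)h(u-j/M)u^{\ell}\,\rmd u$ is a cosmetic variation. Your closing caveat about the constant implicitly depending on the number of pieces $N$ applies equally to the paper's statement and is not a defect of your argument.
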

\begin{proof}
  The proof is straightforward using, for $M$ large enough, Riemann approximations on the blocks
  defined by $s/M\in(u_{i-1}+j/M,u_i]$, for $i=1,\dots, N$, and neglecting the
  terms from the indices $s$ such that $s/M\in(u_{i-1},u_{i-1}+j/M]$, the
  number of which is bounded. 
\end{proof}
We can now derive the following approximation of the bias.
\begin{theorem} \label{theorem:expectation_J_M} Suppose that
  Assumption~\ref{hypothesis:locally_stationary_with_f} holds with some
  $\beta>0$ and $R>0$, and let $h:[0,1]\to\R$
  satisfying~\ref{hypothesis:h}. Let $k\in\N$ and $\alpha\in(0,1]$ be uniquely
  defined such that $\beta=k+\alpha$. Then, for all $j\in\Z$ and $M\in 2\N^{*}$, we have
\begin{eqnarray*}
\E\left[\widehat{\gamma}_{T,M}\left(\frac{t}{T},j\right)\right] = \gamma\left(\frac{t}{T},j\right) + \sum_{\ell=1}^{k}c_{h,f,j,\ell}\left(\frac{M}{T}\right)^{\ell}+O_{j,\|h\|_{\infty},\|h'\|_{\infty},\beta,R}\left(\frac{1}{M}+\left(\frac{M}{T}\right)^{\beta}\right)\;,
\end{eqnarray*}
where $c_{h,f,j,\ell}\in\C$ only depends on $h$, the spectral density $f$, $j$ and $\ell$. If $h(x)=h(1-x)$ for all $x\in[0,1]$, then $c_{h,f,j,1}=0$.
\end{theorem}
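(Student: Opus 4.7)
The plan is to insert the Taylor expansion of $u\mapsto\gamma(u,j)$ at $u = t/T$ into the expectation of $\widehat{\gamma}_{T,M}(t/T,j)$ and reduce the resulting weighted $h$-sums to integrals via Lemma~\ref{lem:h-approx-riemann}. First I would take the expectation in Definition~\ref{definition:empirical_local_covariance_function}; parametrizing $s := t_1 - M/2$ with $t_1 - t_2 = j$ (assume without loss of generality $j\geq 0$, the other case being handled by the symmetry of the covariance function), this yields
\[
\E[\widehat{\gamma}_{T,M}(t/T,j)] = \frac{1}{H_M}\sum_{s} h\!\left(\tfrac{s+M/2}{M}\right)h\!\left(\tfrac{s+M/2-j}{M}\right)\gamma^{*}(t+s,T,j)\,.
\]
Assumption~\ref{hypothesis:locally_stationary_with_f} together with~\eqref{equation:gamma_gamma_star} lets me replace each $\gamma^{*}(t+s,T,j)$ by $\gamma((t+s)/T,j)$ at a uniform cost $O_R(T^{-\min(1,\beta)})$. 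The function $g(u):=\gamma(u,j)=\int_{-\pi}^{\pi}\rme^{\rmi j\lambda}f(u,\lambda)\,\rmd\lambda$ inherits the $\beta$-H\"older regularity of $f(\cdot,\lambda)$ by differentiating under the integral sign, so Taylor's formula gives, uniformly for $|s|\leq M/2$,
\[
\gamma\!\left(\tfrac{t+s}{T},j\right) = \sum_{\ell=0}^{k}\tfrac{g^{(\ell)}(t/T)}{\ell!}\left(\tfrac{s}{T}\right)^{\ell} + O_{j,\beta,R}\!\left((M/T)^{\beta}\right).
\]

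Substituting this back into the sum and writing $(s/T)^\ell = (M/T)^\ell (t_1/M-1/2)^\ell$, the $\ell$-th term becomes $(M/T)^\ell g^{(\ell)}(t/T)/\ell!$ multiplied by a weighted sum $H_M^{-1}\sum_{t_1}h(t_1/M)h((t_1-j)/M)(t_1/M-1/2)^\ell$. A binomial expansion of $(t_1/M-1/2)^\ell$ combined with Lemma~\ref{lem:h-approx-riemann} (using $H_M = M + O(1)$ and $\int_0^1 h^2 = 1$ from~\ref{hypothesis:h}) approximates this sum by $I_\ell := \int_0^1 h^2(u)(u-1/2)^\ell\,\rmd u$ modulo an $O(M^{-1})$ remainder. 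The $\ell=0$ contribution then reproduces the leading $\gamma(t/T,j)$ (since $I_0 = 1$), while for $\ell\geq 1$ I would set
\[
c_{h,f,j,\ell} := \tfrac{g^{(\ell)}(t/T)}{\ell!}\,I_\ell,
\]
which indeed depends only on $h,f,j,\ell$ (the $t/T$-dependence being absorbed into the spectral density $f$). The symmetric-taper claim is then immediate: if $h(1-x)=h(x)$, the substitution $u\mapsto 1-u$ in $\int_0^1 h^2(u)(u-1/2)\,\rmd u$ maps the integrand to its opposite, forcing $I_1 = 0$ and hence $c_{h,f,j,1}=0$.

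The only real piece of bookkeeping is to verify that the three error sources --- the weakly-locally-stationary error $T^{-\min(1,\beta)}$, the Taylor remainder $(M/T)^{\beta}$, and the Riemann-sum residuals $O(M^{-1}(M/T)^\ell)$ for $\ell\leq k$ --- all fit into the single bound $O(M^{-1}+(M/T)^{\beta})$ stated in the theorem. In the natural range $1\leq M\leq T$ this is routine: $T^{-\min(1,\beta)}\leq M^{-1}+(M/T)^\beta$, and each $M^{-1}(M/T)^\ell$ is dominated by $M^{-1}$. I do not foresee any serious obstacle; the proof is essentially a careful Taylor expansion plus Riemann-sum approximation, with the only delicate point being the precise tracking of how the H\"older regularity of $f(\cdot,\lambda)$ transfers to the derivatives of $\gamma(\cdot,j)$ and how the implicit constants depend on $j$, $\|h\|_\infty$ and $\|h'\|_\infty$.
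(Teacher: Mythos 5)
Your proof is correct and follows essentially the same route as the paper's: take the expectation, replace $\gamma^{*}$ by $\gamma$ via~(\ref{equation:gamma_gamma_star}), Taylor-expand in the rescaled time variable, and reduce the tapered sums to the integrals $\int_0^1 h^2(u)\left(u-\tfrac12\right)^\ell\rmd u$ via Lemma~\ref{lem:h-approx-riemann}, with the symmetry of $h$ cancelling the $\ell=1$ term. The only cosmetic difference is that the paper Taylor-expands $f(\cdot,\lambda)$ and integrates in $\lambda$ afterwards, whereas you expand $\gamma(\cdot,j)$ directly after differentiating under the integral sign; the constants and the error bookkeeping coincide.
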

\begin{proof} 
  Without loss of generality we let moreover assume $M\geq j\geq 0$, in which case
\begin{eqnarray*} 
	\E\left[\widehat{\gamma}_{T,M}\left(\frac{t}{T},j\right)\right] &=& 
\frac{1}{H_{M}}\sum_{s=j+1}^{M}h\left(\frac{s}{M}\right)h\left(\frac{s-j}{M}\right)\gamma^{*}\left(t+s-\frac{M}{2},T,j\right)\;. \label{equation:gamma_hat_gamma_star}
\end{eqnarray*}
Since $|h|$ is also piecewise continuously differentiable,
Lemma~\ref{lem:h-approx-riemann} gives that
$$
\frac{1}{|H_{M}|}\sum_{s=j+1}^{M}\left|h\left(\frac{s}{M}\right)h\left(\frac{s-j}{M}\right)\right|=O_{j,\|h\|_{\infty},\|h'\|_{\infty}}(1)\;.
$$
With Inequality~(\ref{equation:gamma_gamma_star}), we obtain that
\begin{align} \label{equation:gamma_gamma_star_R}
  	\E\left[\widehat{\gamma}_{T,M}\left(\frac{t}{T},j\right)\right]&=\gamma_{M,j}
+O_{j,\|h\|_{\infty},\|h'\|_{\infty},R}\left(T^{-\min(1,\beta)}\right)\;,
\\
\nonumber\text{where}\quad &
	\gamma_{M,j} := \frac{1}{H_{M}}\sum_{s=j+1}^{M}h\left(\frac{s}{M}\right)h\left(\frac{s-j}{M}\right)\gamma\left(\frac{t+s-M/2}{T},j\right) \;.
      \end{align}
Since $f(\cdot,\lambda)\in\Lambda_{1}(\beta,R)$, a Taylor expansion yields
\begin{eqnarray*}
	f\left(\frac{t-M/2+s}{T},\lambda\right) =
        \sum_{\ell=0}^{k}\frac{\partial_{1}^{\ell}
          f\left(t/T,\lambda\right)}{\ell!}\left(\frac{-M/2+s}{T}\right)^{\ell}
        +f_{k}\left(\frac tT,\frac{-M/2+s}{T},\lambda\right)\;,
\end{eqnarray*}
with
\begin{equation}
  \label{eq:remainder-taylor-local-spdens}
\sup_{t\in\Z,1\leq s\leq M}\left|f_{k}(t/T,(-M/2+s)/T,\lambda)\right|=O_{\beta,R}((M/T)^{\beta})\;.  
\end{equation}
Then
\begin{multline}
\gamma_{M,j} = \frac{1}{H_{M}}\int_{-\pi}^{\pi}\rme^{\rmi j\lambda}\sum_{s=j+1}^{M}h\left(\frac{s}{M}\right)h\left(\frac{s-j}{M}\right)f\left(\frac{t-M/2+s}{T},\lambda\right)\rmd\lambda = \\
\sum_{\ell=0}^{k}\int_{-\pi}^{\pi}\frac{\partial_{1}^{\ell} f\left(t/T,\lambda\right)}{\ell!}\rme^{\rmi j\lambda}\frac{1}{H_{M}}\sum_{s=j+1}^{M}h\left(\frac{s}{M}\right)h\left(\frac{s-j}{M}\right)\left(\frac{-M/2+s}{T}\right)^{\ell}\rmd\lambda \\
+\int_{-\pi}^{\pi}\rme^{\rmi j\lambda}\frac{1}{H_{M}}\sum_{s=j+1}^{M}h\left(\frac{s}{M}\right)h\left(\frac{s-j}{M}\right)f_{k}\left(\frac
  tT,\frac{-M/2+s}{T},\lambda\right)\rmd\lambda\;. \label{equation:integral_phi_h_f_after_Taylor}
\end{multline}
Lemma~\ref{lem:h-approx-riemann} yields that for all $\ell=1,\ldots,k$,
\begin{multline*}
\frac{1}{H_{M}}\sum_{s=j+1}^{M}h\left(\frac{s}{M}\right)h\left(\frac{s-j}{M}\right)\left(\frac{-M/2+s}{T}\right)^{\ell}
= \\ \left(\frac{M}{T}\right)^{\ell}
\frac{M}{H_{M}}\frac{1}{M}\sum_{s=j+1}^{M}h\left(\frac{s}{M}\right)h\left(\frac{s-j}{M}\right)\left(-\frac{1}{2}+\frac{s}{M}\right)^{\ell}
 = c_{h,\ell}\left(\frac{M}{T}\right)^{\ell}+O_{j,\|h\|_{\infty},\|h'\|_{\infty},\ell}\left(\frac{1}{M}\right)\left(\frac{M}{T}\right)^{\ell}\;, 
\end{multline*}
with $\displaystyle c_{h,\ell}=\int_{0}^{1}h^{2}\left(u\right)\left(u-\frac{1}{2}\right)^{\ell}\rmd u$.
Observe that $c_{h,0}=1$ by assumption in~\ref{hypothesis:h}, and, if $h(x)=h(1-x)$ for all
$x\in[0,1]$, then moreover $c_{h,1}=0$. From this it follows that
\begin{align*}
\int_{-\pi}^{\pi}\frac{\partial_{1}^{\ell} f\left(t/T,\lambda\right)}{\ell!}\frac{\rme^{\rmi j\lambda}}{H_{M}}\sum_{s=j+1}^{M}h\left(\frac{s}{M}\right)h\left(\frac{s-j}{M}\right)\left(\frac{-M/2+s}{T}\right)^{\ell}\rmd\lambda = \left(\frac{M}{T}\right)^{\ell} \left(c_{h,f,j,\ell}
+ O_{j,\|h\|_{\infty},\|h'\|_{\infty},\beta,R}\left(\frac{1}{M}\right)\right)\;, 
\end{align*}
where, in particular, $c_{h,f,j,0}=\gamma(t/T,j)$ and $c_{h,f,j,1}=0$ if
$h(x)=h(1-x)$. Finally, by~(\ref{eq:remainder-taylor-local-spdens}), and since
$|h|$ is also piecewise continuously differentiable, the reminder term in~(\ref{equation:integral_phi_h_f_after_Taylor}) satisfies
$$
\int_{-\pi}^{\pi}\rme^{\rmi
  j\lambda}\frac{1}{H_{M}}\sum_{s=j+1}^{M}h\left(\frac{s}{M}\right)h\left(\frac{s-j}{M}\right)f_{k}\left(\frac{t-M/2+s}{T},\lambda\right)\rmd\lambda
=
O_{j,\|h\|_{\infty},\|h'\|_{\infty},\beta,R}\left(\left(\frac{M}{T}\right)^{\beta}\right)\;.
$$
We thus finally obtain that
\begin{eqnarray*}
	\gamma_{M,j} = \gamma\left(\frac{t}{T},j\right) +\sum_{\ell=1}^{k}c_{h,f,j,\ell}\left(\frac{M}{T}\right)^{\ell}+O_{d,\|h\|_{\infty},\|h'\|_{\infty},\beta,R}\left(\frac{1}{M}+\left(\frac{M}{T}\right)^{\beta}\right)\;.
\end{eqnarray*}
This approximation and the bound~(\ref{equation:gamma_gamma_star_R}) allow us to conclude the proof.
\end{proof}
\subsection{Proof of Theorem~\ref{theorem:thetas_difference_development}} \label{section:proof_theorem_thetas_difference_development}
Gathering together Assumption~\ref{hypothesis:gamma} and
Theorem~\ref{theorem:expectation_J_M} yields, for any $j=-d,\dots,d$,
\begin{multline} \label{equation:J_M_J}
\widehat{\gamma}_{T,M}\left(\frac{t}{T},j\right) = \gamma\left(\frac{t}{T},j\right) + \sum_{\ell=1}^{k}c_{h,f,j,\ell}\left(\frac{M}{T}\right)^{\ell}+O_{d,\|h\|_{\infty},\|h'\|_{\infty},\beta,R}\left(\frac{1}{M}+\left(\frac{M}{T}\right)^{\beta}\right) \\+ u_{M}\left(\frac{t}{T},j\right)\;,
\end{multline}
where $u_{M}(t/T,j) = O_{L^{\bullet}(\mu),d,\|h\|_{\infty},\|h'\|_{\infty},C}(M^{-1/2})$ and $c_{h,f,j,1}=0$ if $h(x)=h(1-x)$ for all $x\in[0,1]$.

For the sake of simplicity, we drop $t,T$ in the notation and set
$\bm\gamma=\bm\gamma_{t/T}$,
$\widehat{\bm\gamma}_{M}=\widehat{\bm\gamma}_{t,T,M}$,
$\Gamma:=\Gamma_{t/T}$ and
$\widehat{\Gamma}_{M}:=\widehat{\Gamma}_{t,T,M}$. Using the
expression~(\ref{equation:J_M_J}), we obtain
\begin{align}
\Gamma-\widehat{\Gamma}_{M} &= \sum_{\ell=1}^{k}C_{h,f,\ell}\left(\frac{M}{T}\right)^{\ell}+O_{d,\|h\|_{\infty},\|h'\|_{\infty},\beta,R}\left(\frac{1}{M}+\left(\frac{M}{T}\right)^{\beta}\right) + U_{M}\;, \label{equation:Gammas_difference_development} \\
\widehat{\bm\gamma}_{M}-\bm\gamma &= \sum_{\ell=1}^{k}\bm c_{h,f,\ell}\left(\frac{M}{T}\right)^{\ell}+O_{d,\|h\|_{\infty},\|h'\|_{\infty},\beta,R}\left(\frac{1}{M}+\left(\frac{M}{T}\right)^{\beta}\right) + \bm u_{M}\;, \label{equation:gammas_difference_development}
\end{align} 
where the matrices $C_{h,f,\ell}\in\R^{d\times d}$ and the vectors $\bm
c_{h,f,\ell}\in\R^{d}$ only depend on $d,h,f$ and $\ell$. Furthermore $U_{M}$ and
$\bm u_{M}$ both are $O_{L^{\bullet}(\mu),d,\|h\|_{\infty},\|h'\|_{\infty},C}(M^{-1/2})$. Again
$C_{h,f,1}=0$ and $\bm c_{h,f,1}=0$ if $h(x)=h(1-x)$ for all $x\in[0,1]$.

Note now that Lemma~\ref{lemma:eigenvalues_covariance_matrix-spectral_density}
with the assumption $f_->0$ further says that $\Gamma_{t/T}$ is also
non-singular with $\Gamma_{t/T}^{-1}=O_{f_-}(1)$ and,
with~(\ref{equation:definition_hat_theta_Y-W_allcases}), we can thus apply
Lemma~\ref{lemma:thetas_difference_development}, showing
that~(\ref{equation:thetas_difference_development_k}) holds with
$\btheta=\btheta_{t/T}$,
$\widehat{\btheta}=\widehat{\btheta}_{t,T}\left(M\right)$,
$\bm\gamma=\bm\gamma_{t/T}$,
$\widehat{\bm\gamma}=\widehat{\bm\gamma}_{M}=\widehat{\bm\gamma}_{t,T,M}$,
$\Gamma:=\Gamma_{t/T}$ and
$\widehat{\Gamma}=\widehat{\Gamma}_{M}:=\widehat{\Gamma}_{t,T,M}$.  Next the
bounds provided by Lemmas~\ref{lemma:roots_characteristic_polynomial_YW}
and~(\ref{equation:definition_hat_theta_Y-W_allcases}) further imply $\|\btheta\|$ and
$\|\widehat{\btheta}\|$ to be less that $2^d$.  This, with the
approximations~(\ref{equation:Gammas_difference_development})
and~(\ref{equation:gammas_difference_development}),
yields~(\ref{equation:thetas_difference_development}).
\subsection{Proof of
  Theorem~\ref{theorem:combined_thetas_difference_development}} \label{section:proof_theorem_combined_thetas_difference_development}
For each $j=0,\ldots,k$ replace $M$ by $M/2^{j}$
in~(\ref{equation:thetas_difference_development}), multiply the resulting
expression by $\omega_{j}$ and sum over $j$. Matrix $A$ (definition below
Equation~(\ref{equation:alpha})) is a non singular Vandermonde matrix and
$\bm\omega$ is well defined. 
\section{Postponed proofs for  TVCBS and TVAR processes}
\subsection{Proof of Theorem~\ref{thm-ass-C-for-CBS}}\label{sec:proof-CBS-locally-stationary-proof}
  Let us denote, for any $L^p$ random variable $Z$,
  $\|Z\|_p=\left(\E\left[\left|Z\right|^{p}\right]\right)^{1/p}$ its $L^p$-norm.
  The proof relies on the Burkhölder inequality for non-stationary dependent
  sequences. Namely, an immediate consequence of
  \cite[Proposition~5.4]{Dedecker_Doukhan_Lang:2007} and the Hölder and
  Minkowskii inequalities is that if $p\geq2$ and
  $(Z_t)_{t\in\zset}$ is a 
  $L^p$ process adapted to the filtration $(\mathcal{F}_t)_{t\in\zset}$, then,
  for all $s\in\zset$ and $n\geq1$
$$
\left\|\sum_{t=s+1}^{s+n}(Z_t-\E[Z_t])\right\|_p^{2}
\leq 2 p
n\,\left(\sup_{t\in\Z}\left\|Z_t-\E[Z_t]\right\|_p\right)
\,\left(\sup_{t\in\Z}\sum_{k=0}^\infty\left\|\E\left[Z_{t+k}|\mathcal{F}_t\right]-\E[Z_{t+k}]\right\|_p\right)\;.
$$
Applying this inequality with $s=|\ell|+1$, $n=M-|\ell|$ to 
$$
Z_t=h\left(\frac{t}{M}\right)h\left(\frac{t-|\ell|}{M}\right)X_{\lfloor
  uT\rfloor+t-M/2,T}X_{\lfloor uT\rfloor+t-|\ell|-M/2,T},\quad\mathcal{F}_t=\mathcal{F}^\xi_{\lfloor
  uT\rfloor+t-M/2}\;,
$$
where $\mathcal{F}^\xi_t=\sigma(\xi_s,s\leq t)$ denotes the natural filtration
of $(\xi_t)_{t\in\Z}$, we obtain that, for any $M>|\ell|$,
\begin{multline*}
\left\|\widehat{\gamma}_{T,M}\left(u,\ell\right)-\E\left[\widehat{\gamma}_{T,M}\left(u,\ell\right)\right]\right\|_p
\leq \frac{2(p(M-|\ell|))^{1/2}}{H_M}\,\|h\|_\infty^2\;\sup_{s\in\zset}\|X_{s,T}\|_{2p}\\
\times\left(\sup_{t\in\zset}\sum_{k=0}^\infty\left\|\E\left[X_{t+k,T}X_{t-|\ell|+k,T}\mid
    \mathcal{F}^\xi_{t}\right]-\E[X_{t+k,T}X_{t-|\ell|+k,T}]\right\|_p \right)^{1/2}\;. 
\end{multline*}
Under Assumption~\ref{hypothesis:h}, $H_{M}/M\in(1/2,3/2)$ for $M$ large
enough (see Lemma~\ref{lem:h-approx-riemann})
and it is thus now sufficient to show that, for any $p\geq2$, 
\begin{align}
  \label{eq:to-show-burk1}
&\sup_{s\in\zset}\|X_{s,T}\|_{2p}=O_{K,r,(\psi_k)}(1)\\
  \label{eq:to-show-burk2}
&\sup_{t\in\zset}\sum_{k=0}^\infty\left\|\E\left[X_{t+k,T}X_{t-|\ell|+k,T}\mid \mathcal{F}^\xi_{t}\right]-\E[X_{t+k,T}X_{t-|\ell|+k,T}]\right\|_p=O_{K,r,k_0,\ell,(\psi_k),(\zeta_k)}(1)
\end{align}
The bound~(\ref{eq:to-show-burk1}) is a direct consequence
of~(\ref{eq:CBS-def}) and~(\ref{eq:CBS-def-1}) and the assumption on
$(\xi_t)_{t\in\Z}$.  To prove~(\ref{eq:to-show-burk2}), let us define, for all
$t\in\zset$, $T\geq T_0$ and $\bm x\in\R^\N$,
$$
\Phi^0_{t,T}(\bm x)=\varphi^0_{t,T}(\bm x)\,\varphi^0_{t-|\ell|,T}((x_{j+|\ell|})_{j\geq0})\;.
$$
It then follows that $X_{t,T}X_{t-|\ell|,T}=\Phi^0_{t,T}((\xi_{t-j})_{j\geq0})$
and is then straightforward to show that the assumptions on $\varphi^0$ yields that
there exists some constant $K'$ only depending on $K$ and $(\psi_j)_{j\in\N}$ and
$(\zeta_j)_{j\in\N}$ such that,  for all $t\in\zset$, $T\geq T_0$
  and all $\bm x,\bm x'\in\R^\N$ satisfying $x_j=x'_j$ for $1\leq j\leq k_0+|\ell|$,
\begin{align*}
&\left|\Phi^0_{t,T}\left(\bm x\right)\right| \leq K'\left(1+\sum_{j=0}^{\infty}\tilde\psi_j\left|x_{j}\right|\right)^{2r}\;, \\  
&
\left|\Phi^0_{t,T}(\bm x)-\Phi^0_{t,T}(\bm x')\right|\leq K'\,
\left(\sum_{j\geq0}\tilde\zeta_j|x_{k_0+j}-x'_{k_0+j}|\right)\,\left(1+\sum_{j\geq0}\tilde\psi_j(|x_j|+|x'_j|)\right)^{2r-1}\;,
\end{align*}
where $\tilde\psi_j=\psi_j$ and $\tilde\zeta_j=\zeta_j$ for $0\leq j<|\ell|$,
and $\tilde\psi_j=\max(\psi_{j-|\ell|},\psi_j)$ and
$\tilde\zeta_j=\zeta_{j-|\ell|}+\zeta_j$ for $j\geq |\ell|$.  By Jensen's
inequality, we have that, for all $k\geq0$,
$$
\left\|\E\left[X_{t+k,T}X_{t-|\ell|+k,T}\mid
  \mathcal{F}^\xi_{t}\right]-\E[X_{t+k,T}X_{t-|\ell|+k,T}]\right\|_p
\leq\left\|\Phi^0_{t+k,T}((\xi_{t+k-j})_{j\geq0})-\Phi^0_{t+k,T}((\xi'_{t+k-j})_{j\geq0})\right\|_p\;,
$$
where $(\xi'_s)_{s\in\zset}$ is i.i.d. with same distribution as
$(\xi_s)_{s\in\zset}$ and such that $\xi_{t+k-j}=\xi'_{t+k-j}$ for all
$j=0,1,\dots,k-1$ and $(\xi'_{t-j})_{j\geq0}$ and  $(\xi_{t-j})_{j\geq0}$ are
independent. With the above bounds on $\Phi^0$ and using the Minkowskii and
Hölder inequalities, we thus get that, for all
$t\in\zset$ and $k\geq0$,
$$
\left\|\E\left[X_{t+k,T}X_{t-|\ell|+k,T}\mid
  \mathcal{F}^\xi_{t}\right]-\E[X_{t+k,T}X_{t-|\ell|+k,T}]\right\|_p\leq 2K'
(1+\|\tilde{\bm \psi}\|_1\|\xi_0\|_{2rp})^{2r}
\;,
$$
and, if $k> k_0+|\ell|$, the same quantity is bounded from above by
$$
2K'\|\xi_0\|_{2p}\,\left(1+2\|\tilde{\bm \psi}\|_1\|\xi_0\|_{2p(2r-1)}\right)^{2r-1}
\left(\sum_{j=k-k_0}^\infty\tilde\zeta_j\right)\;.
$$
Summing these bounds over all $k\geq0$, we obtain
\begin{multline*}
\sum_{k=0}^\infty\left\|\E\left[X_{t+k,T}X_{t-|\ell|+k,T}\mid
    \mathcal{F}^\xi_{t}\right]-\E[X_{t+k,T}X_{t-|\ell|+k,T}]\right\|_p\\
\leq 2 K'\,\left(1+2\|\tilde{\bm \psi}\|_1\|\xi_0\|_{4rp}\right)^{2r}\,\left(k_0+|\ell|+
\sum_{k> k_0+|\ell|}\sum_{j=k-k_0}^\infty\tilde\zeta_j\right)\\
= 2 K'\,\left(1+2\|\tilde{\bm \psi}\|_1\|\xi_0\|_{4rp}\right)^{2r}\,\left(k_0+|\ell|+
\sum_{j> |\ell|}(j-|\ell|)\tilde\zeta_j\right)\;.  
\end{multline*}
By~(\ref{eq:lip-cond-CBS}), the term between parentheses is a finite constant only depending on $k_0$,
$|\ell|$ and $\bm \zeta$. We thus have shown~(\ref{eq:to-show-burk2}), which
concludes the proof.

\subsection{Proof of Theorem~\ref{theorem:A_TVAR}} \label{section:results_tvar} We 
first need to recall some basic facts on the representation of a TVAR process
as a TVCBS. Let us set $\bm e_{1}=[1\; 0\dots 0]'\in\R^p$ and introduce the
companion $p\times p$ matrices defined for all $u\in\R$ by
$$
A(u)=
\begin{bmatrix}
  \theta_1(u) &\theta_2(u)&\dots&\dots &\theta_p(u) \\
  1 &0&\dots&\dots&0 \\
  0 &1&0 &\dots&0 \\
  \vdots&\ddots&\ddots&\ddots&\vdots\\
  0&\dots&0&1&0
\end{bmatrix}\;.
$$
By \cite[Proposition~1 and its proof]{Giraud_Roueff_Sanchez-Perez:2015}, the
TVAR process $(X_{t,T})_{t\in\Z,T\geq T_0}$ is a special case of TVCBS
introduced in Example~\ref{example:non-stationary_CBS}. Namely, it satisfies a
representation~(\ref{eq:CBS-def}) with a linear form
$$
\varphi^0_{t,T}(\bm x)=\sum_{j=0}^\infty a_{t,T}(j)\sigma((t-j)/T)\; x_j \;,
\quad\text{with}\quad
a_{t,T}(j)=\bm e'_{1}\left[\prod_{i=1}^{j}A\left(\Dfrac{t-i}{T}\right)\right]\bm e_{1}\;,
$$
and moreover, there exist some constants $\bar K>0$ and $\delta_1\in(0,1)$ only depending on
$p,\delta,\beta$ and $R_0$ such that, for all $T\geq T_0$, $t\in\Z$ and $j\in\N^*$, 
\begin{equation}
  \label{eq:tvar-prod-matrices-bound}
\left\|\prod_{i=1}^{j}A\left(\Dfrac{t-i}{T}\right)\right\|\leq \bar K\,\delta_{1}^j \;.  
\end{equation}
Hence in particular 
$$
|a_{t,T}(j)|\leq \bar  K \delta_1^j\;,
$$
implying that $\varphi^0$ above satisfies~(\ref{eq:CBS-def-1})
and~(\ref{eq:lip-cond-CBS-rep}) with $r=1$, $K=1$ and
$\psi_j=\zeta_j=\bar K\sigma_+ \delta_1^j$. 

We can now proceed with the proof of Theorem~\ref{theorem:A_TVAR}, starting with~\ref{item:tvar-1}.
To show that it is weakly locally stationary
with the local spectral density of the AR($p$) with local standard deviation
$\sigma(u)$ and autoregresive coefficients $\theta_1(u),\dots,\theta_p(u)$, it
only remains to show that~(\ref{eq:CBS-def-2}) 
holds with $r=1$ and
$$
\varphi(u,\bm x)=\sigma(u)\sum_{j=0}^\infty \bm e'_{1}A^j\left(u\right)\bm e_{1}\; x_j \;.
$$
This is a simple consequence of~(\ref{eq:tvar-prod-matrices-bound}) and the
fact that $\sigma\in\Lambda_1(\min(1,\beta),R_0)$ and
$\bm\theta\in\Lambda_p(\min(1,\beta),R_0)$ by assumption. Then~(\ref{eq:pred-tvar-allcoeff-same}) follows from the
representation~(\ref{eq:CBS-def})~: using that it is causal
and~(\ref{equation:TVAR-p}), we get in the case $d=p$ that
$\btheta(t/T)=\btheta_{t,T}^*$. On the other hand, the mere definition of the
local spectral density in~(\ref{eq:tvar-local-dens}) (being that of an AR($p$)
process at fixed $u$), the case $d=p$ yields that $\btheta_{u}=\btheta(u)$.
Finally, an additional consequence of the definition of $f$ is that
the spectral density $f(\cdot,\lambda)$ belongs to $\Lambda_{1}(\beta,R)$ for
any $\lambda\in\R$ with $R$ only depending on $p,\delta,\beta,\sigma_+$ and
$R_0$, provided that we can show that
$\left|\btheta\left(\rme^{-\rmi\lambda};u\right)\right|$ can be bounded from
below by a positive constant only depending on $\delta$ and $p$. 
By Lemma~\ref{lem:basic-s_pdelta} and continuity of
$(\lambda,\btheta)\mapsto1-\sum_{j=1}^{p}\theta_{j}\rme^{-\rmi j\lambda}$, and
since $\delta^{-1}>1$ we have that
\begin{equation}
  \label{eq:bounds-polynomial-spdelta}
0<\inf_{\btheta\in
  s_{(p)}\left(\delta\right),\lambda\in\R}\left|1-\sum_{j=1}^{p}\theta_{j}\rme^{-\rmi
    j\lambda}\right|
\leq \sup_{\btheta\in
  s_{(p)}\left(\delta\right),\lambda\in\R}\left|1-\sum_{j=1}^{p}\theta_{j}\rme^{-\rmi
    j\lambda}\right| <\infty\;.  
\end{equation}
Of course these two constants only depend on $\delta$ and $p$ and the inf one
can serve as a lower bound of
$\left|\btheta\left(\rme^{-\rmi\lambda};u\right)\right|$, concluding the proof
of~\ref{item:tvar-1}.  

Next, we prove~\ref{item:tvar-1bis} and~\ref{item:tvar-1ter}, which respectively require the two add-on
properties
\begin{enumerate}[label=(\alph*)]
\item\label{item:fact1}  $f(u,\lambda)\geq f_{-}$ for all $u,\lambda\in\R$, 
\item\label{item:fact2} if  $\xi_0$ has a diffuse
distribution, then $\PP(X_{t,T}=0)=0$ for all $t\in\zset$ and $T\geq T_0$. 
\end{enumerate}
The fact~\ref{item:fact1} follows from~(\ref{eq:tvar-local-dens}),
$\sigma(u)\geq\rho\sigma_+$ and the upper bounds
in~(\ref{eq:bounds-polynomial-spdelta}), which shows that we can find such an
$f_->0$ only depending on $p$, $\delta$, $\rho$ and
$\sigma_+$. Fact~\ref{item:fact2} is a consequence of the TVCBS representation
above and the assumptions on $(\xi_t)$ which implies that for all $t,T$ and
$j\in\N$, we can write $X_{t,T}=a_{t,T}(j)\xi_{t-j}+Z_{t,T}(j)$ with
$Z_{t,T}(j)$ independent of $\xi_{t-j}$. Hence, if $\xi_0$ has a diffuse
distribution, it only remains to prove that for all $T\geq T_0$ and all
$t\in\Z$, there exists $j\in\N$ such that $a_{t,T}(j)\neq0$. Using again the 
 TVCBS representation
above, this is equivalent to show that  for all $T\geq T_0$ and all
$t\in\Z$, $\gamma^{*}\left(t,T,0\right)=\mathrm{var}(X_{t,T})>0$. Now observe
that by~\ref{item:fact1} and since  $(X_{t,T})_{t\in\Z,T\geq T_0}$ is
$(\beta,R)$-weakly locally stationary, we have 
$$
\gamma^{*}\left(t,T,0\right)\geq \gamma(t/T,0)-R\,T^{-\min(1,\beta)}\geq 2\pi f_--R\,T^{-\min(1,\beta)}>0\;,
$$
where the last inequality holds by taking $T_0$ large enough (only depending on
$f_-$ and $R$ and thus on    $p,\delta,\rho,\sigma_+$ and $R_0$.). 

We conclude with the proof of~\ref{item:tvar-2}. We use the above TVCBS
representation which were mentioned to satisfy~(\ref{eq:lip-cond-CBS-rep}) with
$\zeta_j=\bar K\sigma_+ \delta_1^j$ for some
$\delta_1<1$. Hence~(\ref{eq:lip-cond-CBS}) holds as well and we can apply
Theorem~\ref{thm-ass-C-for-CBS}, which shows that the TVAR process
satisfies~\ref{hypothesis:gamma}.


\begin{thebibliography}{20}
\providecommand{\natexlab}[1]{#1}
\providecommand{\url}[1]{\texttt{#1}}
\providecommand{\urlprefix}{URL }
\expandafter\ifx\csname urlstyle\endcsname\relax
  \providecommand{\doi}[1]{doi:\discretionary{}{}{}#1}\else
  \providecommand{\doi}{doi:\discretionary{}{}{}\begingroup
  \urlstyle{rm}\Url}\fi
\providecommand{\eprint}[2][]{\url{#2}}

\bibitem[{Baranger and Brezinski(1991)}]{Baranger_Brezinski:1991}
Jacques Baranger and Claude Brezinski.
\newblock \emph{Analyse num\'erique}, volume~38.
\newblock Hermann, Paris (1991).
\newblock Collection M{\'e}thodes.

\bibitem[{Barron(1987)}]{Barron:1987}
Andrew~R. Barron.
\newblock Are bayes rules consistent in information?
\newblock In Thomas~M. Cover and B.~Gopinath, editors, \emph{Open Problems in
  Communication and Computation}, pages 85--91. Springer New York (1987).
\newblock ISBN 978-1-4612-9162-6.
\newblock \doi{10.1007/978-1-4612-4808-8_22}.
\newblock \urlprefix\url{http://dx.doi.org/10.1007/978-1-4612-4808-8_22}.

\bibitem[{Brockwell and Davis(2002)}]{Brockwell_Davis:2002}
Peter~J. Brockwell and Richard~A. Davis.
\newblock \emph{Introduction to time series and forecasting}.
\newblock Springer Texts in Statistics. Springer-Verlag, New York, second
  edition (2002).
\newblock ISBN 0-387-95351-5.
\newblock \doi{10.1007/b97391}.
\newblock \urlprefix\url{http://dx.doi.org/10.1007/b97391}.
\newblock With 1 CD-ROM (Windows).

\bibitem[{Brockwell and Davis(2006)}]{Brockwell_Davis:2006}
Peter~J. Brockwell and Richard~A. Davis.
\newblock \emph{Time series: theory and methods}.
\newblock Springer Series in Statistics. Springer, New York (2006).
\newblock ISBN 978-1-4419-0319-8; 1-4419-0319-8.
\newblock Reprint of the second (1991) edition.

\bibitem[{Cesa-Bianchi and Lugosi(2006)}]{Cesa-Bianchi_Lugosi:2006}
Nicol{\`o} Cesa-Bianchi and G{\'a}bor Lugosi.
\newblock \emph{Prediction, learning, and games}.
\newblock Cambridge University Press, Cambridge (2006).
\newblock ISBN 978-0-521-84108-5; 0-521-84108-9.
\newblock \doi{10.1017/CBO9780511546921}.
\newblock \urlprefix\url{http://dx.doi.org/10.1017/CBO9780511546921}.

\bibitem[{Conway(1973)}]{Conway:1973}
John~B. Conway.
\newblock \emph{Functions of one complex variable}.
\newblock Springer-Verlag, New York-Heidelberg (1973).
\newblock Graduate Texts in Mathematics, 11.

\bibitem[{Dahlhaus(1996{\natexlab{a}})}]{Dahlhaus-JNPS:1996}
R.~Dahlhaus.
\newblock Maximum likelihood estimation and model selection for locally
  stationary processes.
\newblock \emph{J. Nonparametr. Statist.} \textbf{6}~(2-3), 171--191
  (1996{\natexlab{a}}).
\newblock ISSN 1048-5252.
\newblock \doi{10.1080/10485259608832670}.
\newblock \urlprefix\url{http://dx.doi.org/10.1080/10485259608832670}.

\bibitem[{Dahlhaus(1996{\natexlab{b}})}]{Dahlhaus:1996}
R.~Dahlhaus.
\newblock On the {K}ullback-{L}eibler information divergence of locally
  stationary processes.
\newblock \emph{Stochastic Process. Appl.} \textbf{62}~(1), 139--168
  (1996{\natexlab{b}}).
\newblock ISSN 0304-4149.
\newblock \doi{10.1016/0304-4149(95)00090-9}.
\newblock \urlprefix\url{http://dx.doi.org/10.1016/0304-4149(95)00090-9}.

\bibitem[{Dahlhaus(2012)}]{Dahlhaus:2012}
Rainer Dahlhaus.
\newblock Locally stationary processes.
\newblock In T.S. Rao, S.S. Rao and C.R. Rao, editors, \emph{Time Series
  Analysis: Methods and Applications}, volume~30 of \emph{Handbook of
  Statistics}, pages 351--413. North Holland (2012).
\newblock ISBN 9780444538581.
\newblock \doi{10.1016/B978-0-444-53858-1.00013-2}.
\newblock \urlprefix\url{https://books.google.fr/books?id=9wXOytMWHQoC}.

\bibitem[{Dahlhaus and Giraitis(1998)}]{Dahlhaus_Giraitis:1998}
Rainer Dahlhaus and Liudas Giraitis.
\newblock On the optimal segment length for parameter estimates for locally
  stationary time series.
\newblock \emph{J. Time Ser. Anal.} \textbf{19}~(6), 629--655 (1998).
\newblock ISSN 0143-9782.
\newblock \doi{10.1111/1467-9892.00114}.
\newblock \urlprefix\url{http://dx.doi.org/10.1111/1467-9892.00114}.

\bibitem[{Dedecker et~al.(2007)Dedecker, Doukhan, Lang, Le{\'o}n~R., Louhichi
  and Prieur}]{Dedecker_Doukhan_Lang:2007}
J{\'e}r{\^o}me Dedecker, Paul Doukhan, Gabriel Lang, Jos{\'e}~Rafael
  Le{\'o}n~R., Sana Louhichi and Cl{\'e}mentine Prieur.
\newblock \emph{Weak dependence: with examples and applications}, volume 190 of
  \emph{Lecture Notes in Statistics}.
\newblock Springer, New York (2007).
\newblock ISBN 978-0-387-69951-6.

\bibitem[{Duflo(1997)}]{Duflo:1997}
Marie Duflo.
\newblock \emph{Random iterative models}, volume~34 of \emph{Applications of
  Mathematics (New York)}.
\newblock Springer-Verlag, Berlin (1997).
\newblock ISBN 3-540-57100-0.
\newblock \doi{10.1007/978-3-662-12880-0}.
\newblock \urlprefix\url{http://dx.doi.org/10.1007/978-3-662-12880-0}.
\newblock Translated from the 1990 French original by Stephen S. Wilson and
  revised by the author.

\bibitem[{Gamelin(2001)}]{Gamelin:2001}
Theodore~W. Gamelin.
\newblock \emph{Complex analysis}.
\newblock Undergraduate Texts in Mathematics. Springer-Verlag, New York (2001).
\newblock ISBN 0-387-95093-1; 0-387-95069-9.
\newblock \doi{10.1007/978-0-387-21607-2}.
\newblock \urlprefix\url{http://dx.doi.org/10.1007/978-0-387-21607-2}.

\bibitem[{Giraud et~al.(2015)Giraud, Roueff and
  Sanchez-Perez}]{Giraud_Roueff_Sanchez-Perez:2015}
Christophe Giraud, Fran{\c{c}}ois Roueff and Andres Sanchez-Perez.
\newblock Aggregation of predictors for nonstationary sub-linear processes and
  online adaptive forecasting of time varying autoregressive processes.
\newblock \emph{Ann. Statist.} \textbf{43}~(6), 2412--2450 (2015).
\newblock ISSN 0090-5364.
\newblock \doi{10.1214/15-AOS1345}.
\newblock \urlprefix\url{http://dx.doi.org/10.1214/15-AOS1345}.

\bibitem[{Grenander and Szeg{\H{o}}(1984)}]{Grenander_Szego:1984}
Ulf Grenander and G{\'a}bor Szeg{\H{o}}.
\newblock \emph{Toeplitz forms and their applications}.
\newblock Chelsea Publishing Co., New York, second edition (1984).
\newblock ISBN 0-8284-0321-X.

\bibitem[{Kley et~al.(2016)Kley, Preu{\ss} and Fryzlewicz}]{kley16}
Tobias Kley, Philip Preu{\ss} and Piotr Fryzlewicz.
\newblock Predictive, finite-sample model choice for time series under
  stationarity and non-stationarity.
\newblock Technical report, ArXiv (2016).
\newblock \urlprefix\url{https://arxiv.org/abs/1611.04460}.

\bibitem[{K\"{u}nsch(1995)}]{Kunsch:1995}
Hans~Rudolf K\"{u}nsch.
\newblock A note on causal solutions for locally stationary ar-processes
  (1995).
\newblock Preprint ETH Zürich.

\bibitem[{Moulines et~al.(2005)Moulines, Priouret and
  Roueff}]{Moulines_Priouret_Roueff:2005}
Eric Moulines, Pierre Priouret and Fran{\c{c}}ois Roueff.
\newblock On recursive estimation for time varying autoregressive processes.
\newblock \emph{Ann. Statist.} \textbf{33}~(6), 2610--2654 (2005).
\newblock ISSN 0090-5364.
\newblock \doi{10.1214/009053605000000624}.
\newblock \urlprefix\url{http://dx.doi.org/10.1214/009053605000000624}.

\bibitem[{Priestley(1965)}]{Priestley:1965}
M.~B. Priestley.
\newblock Evolutionary spectra and non-stationary processes.({W}ith
  discussion).
\newblock \emph{J. Roy. Statist. Soc. Ser. B} \textbf{27}, 204--237 (1965).
\newblock ISSN 0035-9246.

\bibitem[{Whittle(1963)}]{Whittle:1963}
P.~Whittle.
\newblock On the fitting of multivariate autoregressions, and the approximate
  canonical factorization of a spectral density matrix.
\newblock \emph{Biometrika} \textbf{50}, 129--134 (1963).
\newblock ISSN 0006-3444.

\end{thebibliography}
\end{document}